\pgfplotsset{compat=newest}
\def\centerarc[#1](#2)(#3:#4:#5)(#6);%
\theoremstyle{plain}
\newtheorem{lem}{Lemma}[section]
\newtheorem{thm}[lem]{Theorem}
\newtheorem{cor}[lem]{Corollary}
\newtheorem{prop}[lem]{Proposition}
\theoremstyle{definition}
\newtheorem{defn}[lem]{Definition}
\newtheorem{rem}[lem]{Remark}
\newcommand{\NN}{\mathbb N}
\newcommand{\ZZ}{\mathbb Z}
\newcommand{\RR}{\mathbb R}
\newcommand{\sE}{\mathcal{E}}
\newcommand{\sA}{\mathcal{A}}
\newcommand{\sG}{\mathcal{G}}
\newcommand{\sL}{\mathcal{L}}
\newcommand{\sS}{{\mathbb{S}^1}}
\newcommand{\intd}[1]{\, \mathrm{d}#1}
\numberwithin{equation}{section}
\begin{document}
	
	\title[An area preserving elastic flow]{Dynamics of elastic wires:\\ Preserving Area Without Nonlocality}
	
	\keywords{Euler--Bernoulli elastic energy, geometric evolution equation, $H^{-1}$-gradient flow, sixth order evolution equation, area preservation, \L ojasiewicz--Simon gradient inequality}
	\subjclass[2020]{
		53E40 (primary),   	
		35K30,   	
		35B40 (secondary)  	
	}
	
	\author[L.~Langer]{Leonie Langer}
	\address[L.~Langer]{Institute of Applied Analysis, Ulm University, Helmholtzstra\ss e 18, 89081 Ulm, Germany.}
	\email{leonie.langer@uni-ulm.de}
	
	\begin{abstract}
		We derive an $H^{-1}$-gradient flow of the elastic energy which preserves the enclosed area of evolving planar curves. 
		For this new sixth-order evolution equation, we prove a global existence result. Additionally, by penalizing the length, we show convergence to an area constrained critical point of the elastic energy.
	\end{abstract}
	
	\maketitle

	
	\section{Introduction}
	\label{sec:intro}
	We model a closed planar elastic wire by a sufficiently smooth regular curve $\gamma\colon\sS\to\RR^2$.  
	The elastic energy of the wire is then given by
	\begin{align}
		\label{eq:ee}
		\sE(\gamma)=\frac12\int_{\sS}\vert\kappa\vert^2\intd s,
	\end{align}
	where $\kappa=\partial_s^2\gamma$ (with $\partial_s=\vert\partial_x\gamma\vert^{-1}\partial_x$) is the curvature vector and $\intd s=\vert\partial_x\gamma\vert\intd x$ denotes the arclength measure of $\gamma$.
	The study of the elastic energy \eqref{eq:ee} dates back to the 17th and 18th centuries. At that time, Jacob Bernoulli, his nephew Daniel, and Euler derived it to determine the shape of an elastic beam bent by an attached weight, see for example \cite{L08}. Even today, the elastic energy finds applications across various fields, from engineering and physics to biomechanics and materials science. It appears as the $\Gamma$-limit of an energy functional for lipid bilayer membranes (see \cite{PR09}) and of discrete bending energies for atomic chains (see \cite{ADG17}, \cite{EGW18}).
	
	If the elastic wire is extensible, the elastic energy can be made arbitrarily small by enlarging the wire. 
	In many situations, this is unphysical and thus also a penalization of length is taken into account.
	We consider
	\begin{align}
		\label{eq:pee}
		\sE_\lambda(\gamma)=\sE(\gamma)+\lambda\sL(\gamma)=\frac12\int_{\sS}\left(\vert\kappa\vert^2+2\lambda\right)\intd s
	\end{align}
	for some $\lambda\geq0$. When $\lambda>0$, the term $\lambda\sL$ measures the energy expended on stretching the extensible wire.
	
	Energetically favoured states of the elastic wire are critical points of \eqref{eq:pee}. 
	These critical points are called elasticae and satisfy the Euler--Lagrange equation
	\begin{align}
		\label{eq:ELelasticae}
		\nabla_s^2\kappa+\frac12\vert\kappa\vert^2\kappa-\lambda\kappa=0.
	\end{align}
	Here, $\nabla_s=P^\perp\partial_s$, where $P^\perp$ is the normal projection along $\gamma$ defined by $P^\perp\phi=\phi-\langle\phi,\partial_s\gamma\rangle\partial_s\gamma$ for vector fields $\phi:\sS\to\RR^2$. For $\lambda>0$, solutions to \eqref{eq:ELelasticae} have been classified in several works, see for instance \cite{LS1984}, \cite{DHMV2008}, or \cite[Lemma 5.4]{MR2021}. In particular, the only closed elasticae are multifold coverings of the circle and of the figure eight elastica. For $\lambda=0$, no critical points of \eqref{eq:pee} exist, see e.g.\ \cite[Section 2.2]{MW24}.
	
	The most straightforward dynamic approach to study \eqref{eq:pee}, is to consider its $L^2$-gradient flow given by 
	\begin{align}
		\label{eq:clasef}
		\partial_t^\perp\gamma=-\nabla_s^2\kappa-\frac12\vert\kappa\vert^2\kappa+\lambda\kappa.
	\end{align}
	Here, $\partial_t^\perp=P^\perp\partial_t$. To account for the parametrization invariance of \eqref{eq:pee}, it is a common approach to only prescribe the normal component of velocity.
	
	In the last decades, several authors have studied \eqref{eq:clasef}
	in different variants. First Polden (\cite{P1996}) and then Dziuk, Kuwert, and Schätzle (\cite{DKS2002}) considered the case of closed curves. Later, the problem was extended to non-closed curves (see for example \cite{NO2014},
	\cite{DPS16}, \cite{Lin}) and to networks (see for example \cite{DALP2019}, 
	\cite{GMP2020}). In $\RR^2$, curves can be described by an inclination angle function, which yields not a fourth but a second order flow equation. This flow has been first considered by Wen (\cite{W1993}) and later by \cite{LL2015}, \cite{LL2018}. For a more detailed overview, we refer to the survey article \cite{MR4277362}. 
	
	In several works, inextensibility of the wire is assumed, i.e.\ the length of the evolving curve is kept fixed (\cite{K1996}, \cite{DALP2014}, 
	\cite{DKS2002}, \cite{RS2020}). 
	In contrast, to the best of our knowledge, there is only one work (\cite{O2007}) in which the elastic flow under the constraint of fixed enclosed  
	area is studied. The area preserving scenario is physically relevant, for instance, to describe an interface between an inner domain and an outer domain filled with two kinds of incompressible viscous fluids. This relevance is further indicated by the existence of several works exploring area preserving variants for other flows of curves, see for example \cite{G86},  
	or the recent articles \cite{W23}, \cite{K24}. In \cite[Section 6]{MR4277362}, the authors explicitly point out the absence of studies of an area preserving elastic flow. 
	
	In the aforementioned article \cite{O2007}, Okabe derives a system of equations with a nonlocal Lagrange multiplier that governs 
	the $L^2$-gradient of \eqref{eq:ee} under the constraints of inextensiblity and preserved enclosed area. 
	For this nonlocal system, existence of a unique solution and convergence is proven. 
	In some models, local evolution equations that do not depend on any action at a distance are preferred (see for example \cite{F2000}).
	This motivates to search for an evolution equation which decreases \eqref{eq:pee} and preserves the enclosed area without the need for nonlocal Lagrange multipliers. 
	Inspired by the Cahn--Hilliard equation, which can be seen as an $H^{-1}$-gradient flow preserving a fixed total mass without any nonlocal Lagrange multiplier, we consider a $H^{-1}$-gradient approach for \eqref{eq:pee}. To do so, we first define appropriate Hilbert spaces that depend on the curve $\gamma$ and, if the curve evolves, on time. The definition includes a zero-integral condition, which allows for a Poincaré-type inequality, as also found in the derivation of the Cahn-Hilliard equation (see e.g.\ \cite{F2000}). 
	In these Hilbert spaces, we compute the gradient of \eqref{eq:pee} and take its negative as the flow velocity, see \Cref{sec:deriv}. This approach transforms the classical elastic flow \eqref{eq:clasef} with fourth order velocity into the sixth order flow
	\begin{align}
		\label{eq:fleq}
		\partial_t^\perp\gamma=\nabla_s^2\Big(\nabla_s^2\kappa+\frac12\vert\kappa\vert^2\kappa-\lambda\kappa\Big).
	\end{align}
	The gradient flow structure ensures that under the flow \eqref{eq:fleq}, the energy \eqref{eq:pee} decreases (see \Cref{lem:decE}). Moreover, the signed enclosed area $\sA$ is preserved, i.e.
	\begin{align}
		\label{eq:A}
		\sA(\gamma)=-\frac12\int_\sS\langle\gamma,\nu\rangle\intd s,
	\end{align}
	is constant (see \Cref{lem:presA}).
	Here, $\nu$ denotes the normal vector given by the counter-clockwise rotation of $\partial_s\gamma$ by $\pi/2$. 
	Hence, \eqref{eq:fleq} is a flow for the elastic energy that preserves the enclosed area without nonlocality. 
	In this article we aim to understand this flow.
	\pagebreak
	
	We investigate a smooth regular initial datum $\gamma_0\colon\sS\to\RR^2$ with rotation index $\omega_0\in\ZZ$ and enclosed area $\sA(\gamma_0)=A_0\in\RR$ evolving under the area preserving elastic flow \eqref{eq:fleq}. So we study the initial value problem 
	\begin{align}\label{eq:ivp}
		\begin{cases}
			\partial_t^\perp\gamma=\nabla_s^2\big(\nabla_s^2\kappa+\frac12\vert\kappa\vert^2\kappa-\lambda\kappa\big)\quad&\text{ in }(0,T)\times\sS,\\
			\gamma(0,\cdot)=\gamma_0&\text{ on }\sS.
		\end{cases}
	\end{align}
	Throughout the entire article, the enclosed area $A_0\in\RR$ of the initial datum is considered arbitrary, including the case where $A_0=0$. We also consider arbitrary rotation index $\omega_0\in\ZZ$, in particular allowing the case $\omega_0=0$.
	
	Even though our approach increases the order of the classical elastic flow by two,
	the behavior of solutions does not deteriorate. 
	We show long time existence of solutions and, if we penalize the length,
	convergence. By prescribing only the normal part of the velocity, we achieve uniqueness of the solution up to reparametrization. We refer to this as geometric uniqueness.
	
	\begin{thm}[Global existence and convergence]\label{thm:combined}
		For any $\lambda\geq0$, there exists a geometrically unique global smooth solution $\gamma\colon[0,\infty)\times\sS\to\RR^2$ of \eqref{eq:ivp}.
		If $\lambda>0$, there exists a family of smooth diffeomorphisms $\Phi(t)\colon\sS\to\sS$, $t\in(0,\infty)$, such that $\gamma(t,\Phi(t,\cdot))$ converges smoothly for $t\to\infty$ to a stationary solution $\gamma_\infty$.
	\end{thm}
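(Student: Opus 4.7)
The plan is to follow the standard strategy for higher-order geometric gradient flows: (i) short-time existence via a parabolic PDE analysis after breaking the parametrization invariance, (ii) uniform higher-order a priori bounds using the energy identity and interpolation, (iii) passage to the long-time limit by ruling out singularities, and (iv) convergence via a \L ojasiewicz--Simon argument, as suggested by the keywords.

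For short-time existence and geometric uniqueness, I would write the evolving curve as a normal graph $\gamma(t,x)=\gamma_0(x)+\psi(t,x)\nu_0(x)$ over the initial datum and derive the resulting scalar equation for $\psi$. A direct computation shows this equation is quasilinear of sixth order with strictly positive principal coefficient near $\psi=0$, so classical maximal regularity results for such equations yield a unique smooth solution on a short time interval. Geometric uniqueness — uniqueness up to tangential reparametrizations — follows by showing that any two smooth normal-speed solutions of \eqref{eq:ivp} can locally be put into such a graph form and thus must agree up to reparametrization.

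For long-time existence, the starting point is the energy inequality of the gradient flow together with area preservation (\Cref{lem:decE} and \Cref{lem:presA}). For $\lambda>0$ these immediately furnish uniform control of $\|\kappa\|_{L^2}$ and a uniform upper bound on the length, while Gauss--Bonnet combined with the isoperimetric inequality adapted to the rotation index $\omega_0$ gives a length lower bound whenever $\omega_0\neq 0$ or $A_0\neq 0$. To upgrade these zeroth-order bounds to bounds on $\|\nabla_s^k\kappa\|_{L^2}$ for every $k$, I would differentiate these norms along the flow, integrate by parts, and exploit the fact that the dissipation produced by the sixth-order equation controls three more derivatives of $\kappa$ than the classical fourth-order elastic flow; the nonlinear lower-order terms can then be absorbed via Gagliardo--Nirenberg-type interpolation inequalities on curves, in the spirit of Dziuk--Kuwert--Sch\"atzle. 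I expect the main technical obstacle here to lie in tracking the rather lengthy commutators that appear in the sixth-order setting and in handling the borderline case $\lambda=0$ together with $\omega_0=0$ and $A_0=0$, where neither the energy nor area preservation provides a two-sided length bound and a more refined argument, perhaps via a suitable rescaling, is needed.

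Once such uniform curvature bounds are established, standard compactness and reparametrization arguments promote the local solution to a global smooth one on $[0,\infty)$. For the convergence statement when $\lambda>0$, I would first extract subsequential smooth limits along suitable reparametrizations $\Phi(t_k)$; the uniform bounds and the identity $\frac{\mathrm{d}}{\mathrm{d}t}\sE_\lambda=-\|v\|_{H^{-1}}^2$, integrated over $[0,\infty)$, force any such limit $\gamma_\infty$ to be an area-constrained critical point of $\sE_\lambda$, i.e.\ to satisfy \eqref{eq:ELelasticae} modified by a Lagrange multiplier coming from the area constraint. To upgrade subsequential to full convergence, I would apply a \L ojasiewicz--Simon gradient inequality for $\sE_\lambda$ at $\gamma_\infty$ in the appropriate $H^{-1}$-type function space on the submanifold of fixed-area curves; combined with the gradient flow structure this yields integrability of $\|\partial_t\gamma\|$ and hence convergence of the entire trajectory. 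The hardest single step of the whole proof is likely to be this \L ojasiewicz--Simon inequality in the present moving-Hilbert-space setting: it requires verifying analyticity of $\sE_\lambda$ in suitable function spaces and a Fredholm-type property of the linearized operator around $\gamma_\infty$, both modulo tangential reparametrizations and modulo the constraint that the area is preserved.
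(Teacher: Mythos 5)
Your overall skeleton (graph representation for short-time existence, energy plus interpolation for a priori bounds, subconvergence, \L ojasiewicz--Simon for full convergence) matches the paper, but there are two concrete problems with the plan. First, the length bounds. You claim a lower length bound needs $\omega_0\neq0$ or $A_0\neq0$ and that the case $\lambda=0$, $\omega_0=0$, $A_0=0$ requires ``a more refined argument, perhaps via a suitable rescaling''. Neither is right. Fenchel's theorem together with Cauchy--Schwarz gives $4\pi^2\leq\sL(\gamma)\Vert\kappa\Vert_{L^2(\intd s)}^2\leq 2\sL(\gamma)\sE_\lambda(\gamma_0)$ for \emph{every} closed curve, independently of $\omega_0$ and $A_0$, so the lower bound is free (cf.\ \eqref{eq:lenghtlowerbound}). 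For the upper bound, no uniform-in-time bound is needed for global existence (and none exists: for $\lambda=0$, $A_0=0$ the length in fact blows up as $t\to\infty$, see \Cref{lem:exnonconv}); it suffices that the length stays finite on finite time intervals, which follows from the interpolation estimate $\frac{\intd}{\intd t}\sL(\gamma)\leq C(\sE_\lambda(\gamma_0))$, i.e.\ at most linear growth (\Cref{lem:growthlength}). Without this observation your argument has a genuine hole exactly in the borderline case the theorem covers, and a rescaling argument is not the fix.

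Second, the convergence step. You propose to prove the \L ojasiewicz--Simon inequality ``in the appropriate $H^{-1}$-type function space'' and to conclude from ``integrability of $\Vert\partial_t\gamma\Vert$''. The difficulty you flag but do not resolve is precisely that the spaces $H^{-1}_{\gamma(t)}$ move with the flow and do not even coincide as sets, so integrability of $\Vert\partial_t\gamma\Vert_{H^{-1}_{\gamma(t)}}$ does not by itself give Cauchy behaviour of the trajectory in any fixed space. The paper's route is different and worth noting: it proves a \emph{constrained} \L ojasiewicz--Simon inequality for the $L^2$-gradient (with the Lagrange-multiplier projection coming from the area constraint, \Cref{thm:Loja}), identifies the dissipation $\Vert\nabla_s\nabla_{L^2}\sE_\lambda\Vert_{L^2}$ with $\Vert\partial_t^\perp\gamma\Vert_{H^{-1}_\gamma}$ and controls the constrained gradient by it via Poincar\'e--Wirtinger, and then replaces the moving norm by a time-independent auxiliary norm $\Vert\cdot\Vert_\star$ that is equivalent to $\Vert\cdot\Vert_{H^{-1}_\gamma}$ with constants controlled uniformly along the flow (\Cref{lem:auxnorm}); the interpolation $\Vert\phi\Vert_{L^2}^2\leq\Vert\phi\Vert_{H^{-1}_\gamma}\Vert\phi\Vert_{H_\gamma}$ (\Cref{lem:interpol2}) then upgrades $\star$-convergence to strong convergence. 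Some device of this kind (a fixed norm dominated uniformly by the dissipation) is indispensable; as written, your final step does not close.
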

	
	By a stationary solution, we mean a smooth regular curve $\gamma\colon\sS\to\RR^2$ with rotation index $\omega_0$ and $\sA(\gamma)=A_0$ such that 
	$
	\nabla_s^2\big(\nabla_s^2\kappa+\tfrac12\vert\kappa\vert^2\kappa-\lambda\kappa\big)=0.
	$
	Since $\gamma$ describes a closed curve, this is equivalent to 
	\begin{align}
		\label{eq:statsolvec2}
		\nabla_s\big(\nabla_s^2\kappa+\tfrac12\vert\kappa\vert^2\kappa-\lambda\kappa\big)=0.
	\end{align}
	This equation is the classical elasticae equation \eqref{eq:ELelasticae} with a possibly nonzero constant multiple of the normal vector on the right hand side.
	The presence of this nonzero term implies that there are stationary solutions which do not correspond to a multiple covering of a circle or of the figure eight elastica.
	The existence of non-elastica solutions of \eqref{eq:statsolvec2} was proven by deriving an explicit parametrization using Jacobi elliptic functions, see for example \cite{VDM08}.
	By showing convergence of the flow, we provide an
	alternative proof for the existence of solutions to \eqref{eq:statsolvec2} that do not satisfy \eqref{eq:ELelasticae}, meaning they are not elasticae.
	
	\Cref{thm:combined} implies in particular that the solution remains within a compact set for all times if $\lambda>0$. Since no maximum principle is available for higher-order flows like \eqref{eq:fleq}, which would allow us to conclude this property by comparison arguments, this is a remarkable result. The same holds true for the classical elastic flow.
	
	For the convergence result in \Cref{thm:combined}, the length penalization is necessary. Indeed, for $\lambda=0$, solutions do not converge in general. If $\lambda=0$ and $A_0=0$, the length of the evolving curve always becomes unbounded for $t\to\infty$ (see \Cref{lem:exnonconv}). 
	
	To prove the global in time existence in \Cref{thm:combined}, we first establish suitable interpolation inequalities, which we then use to show that the solution remains bounded in finite time. For the proof of convergence, we use that stationary solutions are area constrained critical points of \eqref{eq:pee} and rely on a suitable version of a constrained \L ojasiewicz--Simon gradient inequality, see \cite{ConstrLoja}. The $H^{-1}$-gradient flow structure obstructs the usual procedure to show convergence (see for example \cite{MantegazzaPozzetta21}, \cite{DPS16} or \cite{RS2020}). Indeed, the time-dependent Hilbert spaces in which the energy decreases do not coincide as sets, unlike for the $L^2$-gradient flow. 
	By considering suitably defined time-independent norms, we overcome this difficulty. 
	
	We point out that in \cite{AMCWW2020}, an evolution equation similar to \eqref{eq:fleq} is studied. The authors consider the $L^2$-gradient flow of the Dirichlet energy of the scalar curvature (without penalization of length), which also yields a sixth order equation. However, this equation does not preserve the enclosed area in general and shows a completely different asymptotic behavior.
	In \cite{AMCWW2020}, the authors classify the stationary solutions as $\omega$-fold covered circles. In particular, this shows that the solution for an initial datum with $\omega=0$ cannot converge. Convergence to an $\omega$-fold covered circle is shown under the assumption that the length of the evolving curve remains bounded, which is the case for small initial energy. The length preserving and area preserving variants of the flow discussed in \cite{AMCWW2020} (with nonlocal Lagrange multipliers), are studied in \cite{MW22} and \cite{W21}.
	
	This article is structured as follows. In \Cref{sec:deriv} we define an appropriate Hilbert space setting in which the area preserving elastic flow is derived as a gradient flow.  \Cref{sec:firstprop} lists some basic properties of the evolution equation; in particular, we see that the flow \eqref{eq:fleq} indeed preserves the enclosed area. \Cref{sec:shorttime} briefly addresses the question of short time existence, before we prove global in time existence and subconvergence (for $\lambda>0$) in \Cref{sec:globalex}. \Cref{sec:conv} is devoted to prove full convergence using a suitable \L ojasiewicz--Simon gradient inequality, which we first derive.
	
	\section{Derivation of the flow equation }
	\label{sec:deriv}
	
	\subsection{An appropriate Hilbert space setting }
	\label{subsec:Hgamma}
	
	For a given smooth regular curve $\gamma\colon\sS\to\RR^2$, we define the vector space
	\begin{align}
		H_\gamma:=\Big\lbrace Y\colon\sS\to\RR^2: Y=\langle Y,\nu_\gamma\rangle\nu_\gamma,\; Y\in L^2(\intd s_\gamma),\; \nabla_{s_\gamma}Y\in L^2(\intd s_\gamma),\; \int_\sS\langle\nu_\gamma,Y\rangle\intd s_\gamma=0\Big\rbrace.
	\end{align}
	Here, 
	$\intd s_\gamma$ denotes the arclength measure of $\gamma$, $\nabla_{s_\gamma}$ is the normal projection of the arclength derivative $\partial_{s_\gamma}$ along $\gamma$ and $\nu_\gamma$ is the normal vector of $\gamma$.
	On $H_\gamma$, we consider the inner product
	\begin{align}
		\langle Y_1,Y_2\rangle_{H_\gamma}=\langle \nabla_{s_\gamma}Y_1,\nabla_{s_\gamma}Y_2\rangle_{L^2(\intd s_\gamma)}
	\end{align} 
	which induces the norm 
	\begin{align}\label{eq:normHgamma}
		\Vert Y\Vert_{H_\gamma}=\Vert\nabla_{s_\gamma} Y\Vert_{L^2(\intd s_\gamma)}.
	\end{align}
	
	This is justified by the following lemma, which also shows that on $H_\gamma$, $\Vert\cdot\Vert_{H_\gamma}$ is equivalent to the Sobolev norm $\Vert\cdot\Vert_{W^{1,2}}=\Vert\cdot\Vert_{W^{1,2}(\intd x)}$.
	
	\begin{lem}\label{lem:normequiv}
		There exist constants $c_1,c_2>0$, depending on
		$\sL(\gamma)$, $\Vert\kappa_\gamma\Vert_{C^0}$, $\Vert\partial_x\gamma\Vert_{C^0}$,
		and $\min_\sS\vert\partial_x\gamma\vert$,
		such that 
		\begin{align} \label{eq:normequiv0}
			c_1\Vert Y\Vert_{W^{1,2}}\leq\Vert Y\Vert_{H_\gamma}\leq c_2\Vert Y\Vert_{W^{1,2}}\quad\text{ for }Y\in H_\gamma.
		\end{align}
		In particular, $(H_\gamma, \Vert\cdot\Vert_{H_\gamma})$ is a Hilbert space.
	\end{lem}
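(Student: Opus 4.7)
The strategy is to compare both quantities term by term with the Sobolev norm $\|Y\|_{W^{1,2}(\intd x)}^2 = \|Y\|_{L^2(\intd x)}^2 + \|\partial_x Y\|_{L^2(\intd x)}^2$, using the elementary conversions $\intd s_\gamma = |\partial_x\gamma|\intd x$ and $\partial_{s_\gamma} = |\partial_x\gamma|^{-1}\partial_x$. The upper bound is the easy direction: since $Y$ is normal, $|\nabla_{s_\gamma} Y| \le |\partial_{s_\gamma} Y| = |\partial_x\gamma|^{-1}|\partial_x Y|$, so
\begin{align*}
\|Y\|_{H_\gamma}^2 = \int_\sS |\nabla_{s_\gamma} Y|^2 |\partial_x\gamma| \intd x \le (\min_\sS|\partial_x\gamma|)^{-1} \|\partial_x Y\|_{L^2(\intd x)}^2 \le c_2^2 \|Y\|_{W^{1,2}}^2.
\end{align*}

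For the lower bound, I would write $Y = f\nu_\gamma$ for the scalar $f := \langle Y,\nu_\gamma\rangle$, which satisfies $\int_\sS f\intd s_\gamma = 0$ by definition of $H_\gamma$. The Frenet relations give $\partial_{s_\gamma} Y = (\partial_{s_\gamma} f)\nu_\gamma + f\partial_{s_\gamma}\nu_\gamma$ with $|\partial_{s_\gamma}\nu_\gamma| = |\kappa_\gamma|$, whence $\nabla_{s_\gamma} Y = (\partial_{s_\gamma} f)\nu_\gamma$ and
\begin{align*}
|\partial_{s_\gamma} Y|^2 = (\partial_{s_\gamma} f)^2 + f^2|\kappa_\gamma|^2.
\end{align*}
Since $f$ is a periodic scalar function on $\sS$ with mean zero with respect to $\intd s_\gamma$, the Poincar\'e--Wirtinger inequality on the circle of length $\sL(\gamma)$ yields
\begin{align*}
\int_\sS f^2\intd s_\gamma \le C(\sL(\gamma))\int_\sS (\partial_{s_\gamma} f)^2 \intd s_\gamma = C(\sL(\gamma))\|Y\|_{H_\gamma}^2.
\end{align*}
Converting back to $\intd x$ gives $\|Y\|_{L^2(\intd x)}^2 \le (\min|\partial_x\gamma|)^{-1}\|Y\|_{L^2(\intd s)}^2 \lesssim \|Y\|_{H_\gamma}^2$. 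For $\partial_x Y$, combine $|\partial_x Y|^2 \le \|\partial_x\gamma\|_{C^0}^2 |\partial_{s_\gamma} Y|^2$ with the decomposition above and the Poincar\'e bound on $\|f\|_{L^2(\intd s)}$ to obtain
\begin{align*}
\|\partial_x Y\|_{L^2(\intd x)}^2 \le \|\partial_x\gamma\|_{C^0}^2(\min|\partial_x\gamma|)^{-1}\bigl(1 + C(\sL(\gamma))\|\kappa_\gamma\|_{C^0}^2\bigr)\|Y\|_{H_\gamma}^2.
\end{align*}
Adding these two bounds produces the required $c_1^2 \|Y\|_{W^{1,2}}^2 \le \|Y\|_{H_\gamma}^2$ with $c_1$ depending only on the listed quantities.

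For the Hilbert space assertion, note that the constraints cutting out $H_\gamma$ inside $W^{1,2}(\sS,\RR^2)$ (pointwise normality and the single linear condition $\int\langle Y,\nu_\gamma\rangle\intd s_\gamma = 0$) are closed under $W^{1,2}$-convergence, so $H_\gamma$ is a closed subspace of $W^{1,2}$; by the norm equivalence just established, $(H_\gamma,\|\cdot\|_{H_\gamma})$ is therefore complete, and completeness together with the inner-product structure makes it a Hilbert space. The only non-routine step is the application of Poincar\'e--Wirtinger, where one has to be careful that the zero-integral condition is taken with respect to $\intd s_\gamma$ (not $\intd x$) so that the standard one-dimensional inequality on a circle of length $\sL(\gamma)$ applies directly; everything else is bookkeeping of the measure conversions and the Frenet identity.
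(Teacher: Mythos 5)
Your proposal is correct and follows essentially the same route as the paper: reduce to the scalar $f=\langle Y,\nu_\gamma\rangle$, apply the Poincaré--Wirtinger inequality with the zero-mean condition taken with respect to $\intd s_\gamma$, use the Frenet-type identity relating $\partial_{s_\gamma}Y$, $\nabla_{s_\gamma}Y$ and $f\kappa_\gamma$, and then convert between $\intd s_\gamma$ and $\intd x$ via the bounds on $\vert\partial_x\gamma\vert$, concluding the Hilbert space property from closedness of $H_\gamma$ in $W^{1,2}$. No gaps.
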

	
	\begin{proof}
		Let $y:=\langle\nu_\gamma,Y\rangle$. Then $\partial_{s_\gamma}y\in L^2(\intd s_\gamma)$ and
		$
		\int_\sS y\intd s_\gamma=0.
		$
		Thus, with Poincaré's inequality, we obtain
		\begin{align} \label{eq:normequiv1}
			\Vert Y\Vert_{L^2(\intd {s_\gamma})}^2
			=\int_\sS y^2\intd s_\gamma
			\leq C\int_\sS (\partial_{s_\gamma}y)^2\intd s_\gamma
			=C\int_\sS\langle\nabla_{s_\gamma}Y,\nu_\gamma\rangle^2\intd s_\gamma
			=C\Vert\nabla_{s_\gamma}Y\Vert_{L^2(\intd {s_\gamma})}^2
		\end{align}
		for a constant $C$ depending on $\sL(\gamma)$. 
		Moreover, $Y=\langle Y,\nu_\gamma\rangle\nu_\gamma$ implies that 
		\begin{align}\label{eq:partialsY}
			\partial_{s_\gamma}Y
			=\nabla_{s_\gamma}Y-
			\langle Y,\nu_\gamma\rangle\langle\kappa_\gamma,\nu_\gamma\rangle\partial_{s_\gamma}\gamma.
		\end{align}
		Here, we use $\partial_{s_\gamma}\nu_\gamma=-\langle\kappa_\gamma,\nu_\gamma\rangle\partial_{s_\gamma}\gamma$. 
		Equation \eqref{eq:partialsY} yields together with \eqref{eq:normequiv1} that
		\begin{align} \label{eq:normequiv2}
			C_1 \Vert\nabla_{s_\gamma}Y\Vert_{L^2(\intd {s_\gamma})}^2
			\leq\Vert Y\Vert_{L^2(\intd {s_\gamma})}^2+\Vert\partial_{s_\gamma}Y\Vert_{L^2(\intd {s_\gamma})}^2
			\leq C_2 \Vert\nabla_{s_\gamma}Y\Vert_{L^2(\intd {s_\gamma})}^2
		\end{align}
		for constants
		$C_1>0$ depending on the inverse of $\max_\sS \vert\kappa_\gamma\vert^2$ and $C_2>0$ depending on $\sL(\gamma)$ and $\max_\sS \vert\kappa_\gamma\vert^2$.
		From $\partial_{ {s_\gamma}}Y=\vert\partial_x\gamma\vert^{-1}\partial_xY$ and $\intd s_\gamma=\vert\partial_x\gamma\vert\intd x$, we conclude
		\begin{align}
			\Vert Y\Vert^2_{W^{1,2}}=\Vert Y\Vert^2_{L^2}+\Vert\partial_xY\Vert^2_{L^2}
			\leq C\big(\Vert Y\Vert^2_{L^2(\intd s_{\gamma})}+\Vert\partial_{s_\gamma}Y\Vert^2_{L^2(\intd s_\gamma)}\big)
		\end{align}
		for some $C$ depending on $\min_\sS\vert\partial_x\gamma\vert$ and $\max_\sS\vert\partial_x\gamma\vert$. From this, the left inequality of \eqref{eq:normequiv0} follows with \eqref{eq:normequiv1} and \eqref{eq:normequiv2}. The right inequality follows similarly leading to the equivalence of the norms.  
		Moreover, $H_\gamma$ is a closed subspace of $W^{1,2}(\sS;\RR^2)$. Thus, $(H_\gamma, \Vert\cdot\Vert_{H_\gamma})$ is a Hilbert space.
	\end{proof}
	
	The Hilbert space $H_\gamma$ is geometric in the following sense. 
	
	\begin{lem}
		Let $\tilde\gamma=\gamma\circ\varphi\colon\sS\to\RR^2$ be a smooth reparametrization of $\gamma$. If $Y\in H_\gamma$, then $Y\circ\varphi\in H_{\tilde\gamma}$ and $\Vert Y\Vert_{H_\gamma}=\Vert Y\circ\varphi\Vert_{H_{\tilde\gamma}}$. For $Y_1,Y_2\in H_\gamma$, we have $\langle Y_1\circ\varphi,Y_2\circ\varphi\rangle_{H_{\tilde\gamma}}=\langle Y_1,Y_2\rangle_{H_\gamma}$.
	\end{lem}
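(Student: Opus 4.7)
The lemma asserts that the Hilbert space structure on $H_\gamma$ is purely geometric, so the plan is to verify this via direct change of variables, tracking how each ingredient in the definition of $H_\gamma$ behaves under a diffeomorphism $\varphi\colon\sS\to\sS$. The key identities I would collect first are the elementary transformation rules: $\partial_x\tilde\gamma=\varphi'\cdot(\partial_x\gamma)\circ\varphi$, hence $\intd s_{\tilde\gamma}=|\varphi'|\,(\intd s_\gamma)\circ\varphi$ in the sense of change of variables, and $\partial_{s_{\tilde\gamma}}(Z\circ\varphi)=\operatorname{sgn}(\varphi')\,(\partial_{s_\gamma}Z)\circ\varphi$ for any $Z$. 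I would also record that $\nu_{\tilde\gamma}=\operatorname{sgn}(\varphi')\,\nu_\gamma\circ\varphi$ and that the tangential projection $P^\perp$ depends only on the unoriented tangent line, so $P^\perp_{\tilde\gamma}=P^\perp_\gamma\circ\varphi$ acting on vector fields along $\tilde\gamma$.

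With these preparations, I would check the three defining properties of $H_{\tilde\gamma}$ for $Y\circ\varphi$. Normality is immediate: if $Y=\langle Y,\nu_\gamma\rangle\nu_\gamma$, then $Y\circ\varphi=\langle Y\circ\varphi,\nu_\gamma\circ\varphi\rangle\,\nu_\gamma\circ\varphi=\langle Y\circ\varphi,\nu_{\tilde\gamma}\rangle\,\nu_{\tilde\gamma}$, since a pair of sign factors cancels. For the $L^2$-conditions on $Y\circ\varphi$ and $\nabla_{s_{\tilde\gamma}}(Y\circ\varphi)$, I would apply the scalar change of variables formula
\begin{equation*}
\int_\sS (f\circ\varphi)\intd s_{\tilde\gamma}=\int_\sS f\intd s_\gamma,
\end{equation*}
which holds for any integrable $f$ on $\sS$ (irrespective of the orientation of $\varphi$, since $|\varphi'|$ is the correct Jacobian factor), applied first to $f=|Y|^2$ and then to $f=|\nabla_{s_\gamma}Y|^2$ after using $\nabla_{s_{\tilde\gamma}}(Y\circ\varphi)=\operatorname{sgn}(\varphi')\,(\nabla_{s_\gamma}Y)\circ\varphi$. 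The zero-integral condition follows from the same formula applied to $f=\langle\nu_\gamma,Y\rangle$, absorbing a sign into $\nu_{\tilde\gamma}$.

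The norm identity is then a direct consequence: squaring,
\begin{equation*}
\|Y\circ\varphi\|_{H_{\tilde\gamma}}^2=\int_\sS |\nabla_{s_{\tilde\gamma}}(Y\circ\varphi)|^2\intd s_{\tilde\gamma}=\int_\sS |\nabla_{s_\gamma}Y|^2\circ\varphi\intd s_{\tilde\gamma}=\int_\sS|\nabla_{s_\gamma}Y|^2\intd s_\gamma=\|Y\|_{H_\gamma}^2,
\end{equation*}
where the sign $\operatorname{sgn}(\varphi')$ disappears upon squaring. The inner product statement follows by the same computation applied to $\langle\nabla_{s_\gamma}Y_1,\nabla_{s_\gamma}Y_2\rangle$ in place of $|\nabla_{s_\gamma}Y|^2$; again two copies of $\operatorname{sgn}(\varphi')$ cancel.

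I do not expect any real obstacle here. The only subtlety worth flagging explicitly is bookkeeping of signs in the case where $\varphi$ is orientation-reversing: $\nu_\gamma$ and $\partial_{s_\gamma}$ each pick up a sign, but in every quantity appearing in $H_\gamma$ these signs enter in even number and cancel, which is precisely why the space is geometric.
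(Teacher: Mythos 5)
Your proposal is correct and follows essentially the same route as the paper: a direct change of variables with careful sign bookkeeping, using $\nu_{\tilde\gamma}=\pm\,\nu_\gamma\circ\varphi$ and the transformation of $\partial_{s}$ and $\intd s$ under the diffeomorphism, with all orientation signs entering in even number. No gaps to report.
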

	
	\begin{proof}
		Let $Y\in H_\gamma$. If the smooth diffeomorphism $\varphi$ of the reparametrization is orientation preserving, then $\nu_{\tilde\gamma}=\nu_\gamma\circ\varphi$. If $\varphi$ changes the orientation of $\gamma$, then $\nu_{\tilde\gamma}=-\nu_\gamma\circ\varphi$. 
		In both cases, we obtain
		\begin{align}
			\tilde Y:=
			Y\circ\varphi
			=\langle Y\circ\varphi,\nu_\gamma\circ\varphi\rangle\,\nu_\gamma\circ\varphi
			=\langle\tilde Y,\nu_{\tilde\gamma}\rangle\,\nu_{\tilde\gamma}.
		\end{align}
		Moreover, substitution and $\vert\partial_x\tilde\gamma(x)\vert=\vert\varphi'(x)\,\partial_x\gamma(\varphi(x))\vert$ yields
		\begin{align}
			\Vert \tilde Y\Vert^2_{L^2(\intd s_{\tilde\gamma})}
			=\int_\sS\vert Y(\varphi(x))\vert^2\vert\partial_x\tilde\gamma(x)\vert\intd x
			=\int_\sS\vert Y\vert^2\intd s_\gamma<\infty
		\end{align}
		as well as
		\begin{align} \label{eq:normHtildegamma}
			&\Vert\nabla_{s_{\tilde\gamma}}\tilde Y\Vert_{L^2(\intd s_{\tilde\gamma})}
			=\int_\sS(\langle\partial_{s_{\tilde\gamma}}\tilde Y,\nu_{\tilde\gamma}\rangle)^2\intd s_{\tilde\gamma}
			=\int_\sS\frac{1}{\vert\partial_x\tilde\gamma(x)\vert}\big(\langle\partial_x(Y(\varphi(x))),\nu_\gamma(\varphi(x))\rangle\big)^2\intd x\\
			&\quad=\int_\sS\frac{1}{\vert\partial_x\gamma(z)\vert}(\langle\partial_xY(z),\nu_\gamma(z)\rangle)^2\intd z
			=\int_\sS(\langle\partial_{s_\gamma}Y,\nu_\gamma\rangle)^2\intd s_\gamma
			=\Vert\nabla_{s_{\gamma}} Y\Vert_{L^2(\intd s_{\gamma})}<\infty.
		\end{align}
		Along the same lines, we obtain
		\begin{align}
			\int_\sS\langle\nu_{\tilde\gamma},\tilde Y\rangle\intd s_{\tilde\gamma}
			=\int_\sS\langle\nu_\gamma(\varphi(x)),Y(\varphi(x))\rangle\vert\partial_x\tilde\gamma(x)\vert\intd x
			=0,
		\end{align}
		since $\int_\sS\langle\nu_\gamma,Y\rangle\intd s_\gamma=0$.
		Everything together results in $\tilde Y\in H_{\tilde\gamma}$. Moreover, with \eqref{eq:normHgamma}, \eqref{eq:normHtildegamma} yields $\Vert Y\Vert_{H_\gamma}=\Vert \tilde Y\Vert_{H_{\tilde\gamma}}$.
		The rest of the statement follows similarly.
	\end{proof}
	
	In the following, we denote by $H_\gamma^{-1}$ the dual space of $H_\gamma$. 
	
	\begin{defn}
		Let $\phi\in H_\gamma^{-1}$. We call $X\in H_\gamma$ a \textit{weak solution} of 
		\begin{align}\label{eq:weaksol}
			-\nabla_{s_\gamma}^2X=\phi
		\end{align}
		if $\phi(Y)=\langle X,Y\rangle_{H_\gamma}=\langle\nabla_{s_\gamma}X,\nabla_{s_\gamma}Y\rangle_{L^2(\intd s_\gamma)}$ for all $Y\in H_\gamma$.
	\end{defn}
	
	Note that by the Riesz representation theorem, for all $\phi\in H_\gamma^{-1}$, there exists a unique weak solution 
	of \eqref{eq:weaksol}.\medskip
	
	We define an inner product on $H_\gamma^{-1}$ by 
	\begin{align}
		\langle\phi_1,\phi_2\rangle_{H_\gamma^{-1}}:=\langle X_1,X_2\rangle_{H_\gamma}=\langle\nabla_{s_\gamma}X_1,\nabla_{s_\gamma}X_2\rangle_{L^2(\intd s_\gamma)}, 
	\end{align}
	where $X_i\in H_\gamma$ is the unique weak solution of $-\nabla_{s_\gamma}^2X_i=\phi_i$, $i=1,2$. Moreover, we have
	\begin{align}\label{eq:normHgamma-1konkret}
		\Vert X\Vert_{H_\gamma}=\Vert\phi\Vert_{H_\gamma^{-1}}.
	\end{align}
	
	\begin{rem}
		If the weak solution $X$ of \eqref{eq:weaksol} is sufficiently smooth, precisely $\nabla_{s_\gamma}^2 X\in L^2(\intd s_\gamma)$, then $\phi\in H_\gamma^{-1}$ is represented by $\phi=\langle-\nabla_{s_\gamma}^2X,\cdot\rangle_{L^2(\intd s_\gamma)}$. We identify the functional $\phi$ with the function $-\nabla_{s_\gamma}^2X$ using this $L^2$-pairing. With this identification, $H_\gamma\subseteq L^2(\intd s_\gamma)\subseteq H^{-1}_\gamma$. Moreover, this identification allows us to express the $H_\gamma^{-1}$-norm as
		\begin{align}\label{eq:normHgamma-1}
			\Vert\phi\Vert_{H_\gamma^{-1}}=\sup_{Y\in H_\gamma\setminus\lbrace0\rbrace}\frac{1}{\Vert Y\Vert_{H_\gamma}}\int_\sS\langle\phi,Y\rangle\intd s_\gamma.
		\end{align}
	\end{rem}
	
	\begin{rem}
		Let $\phi\in C^\infty(\sS;\RR^2)\cap H_\gamma$. Then $\phi$ is interpreted as an element of $H_\gamma^{-1}$ as $H_\gamma\ni Y\mapsto\langle\phi,Y\rangle_{L^2(\intd s_\gamma)}$. In this case, the weak solution $X$ of \eqref{eq:weaksol} is smooth (and coincides with the strong solution). Indeed, with \eqref{eq:partialsY}, for all $Y\in W^{1,2}(\intd x)$, we have
		\begin{align}
			&C(\gamma)\left\vert\langle\partial_xX,\partial_xY\rangle_{L^2(\intd x)}\right\vert
			\leq \left\vert\langle\partial_{s_\gamma}X,\partial_{s_\gamma}Y\rangle_{L^2(\intd s_\gamma)}\right\vert\\
			&\quad\leq \left\vert\langle\nabla_{s_\gamma}X,\nabla_{s_\gamma}Y\rangle_{L^2(\intd s_\gamma)}\right\vert
			+\left\vert\int_\sS\vert\kappa_\gamma\vert\langle X,\nu_\gamma\rangle\langle\partial_{s_\gamma}\gamma,\partial_{s_\gamma}Y\rangle\intd s_\gamma\right\vert\\
			&\quad\leq \vert\phi(Y)\vert 
			+\left\vert\int_\sS(\partial_{s_\gamma}\vert\kappa_\gamma\vert)\langle X,\nu_\gamma\rangle\langle\partial_{s_\gamma}\gamma,Y\rangle\intd s_\gamma\right\vert
			+\left\vert\int_\sS\vert\kappa_\gamma\vert\langle \partial_{s_\gamma}X,\nu_\gamma\rangle\langle\partial_{s_\gamma}\gamma,Y\rangle\intd s_\gamma\right\vert\\
			&\qquad+\left\vert\int_\sS\vert\kappa_\gamma\vert\langle X,\partial_{s_\gamma}\nu_\gamma\rangle\langle\partial_{s_\gamma}\gamma,Y\rangle\intd s_\gamma\right\vert
			+\left\vert\int_\sS\vert\kappa_\gamma\vert\langle X,\nu_\gamma\rangle\langle\kappa_\gamma,Y\rangle\intd s_\gamma\right\vert\\
			&\quad\leq C\Vert Y\Vert_{L^2(\intd s_\gamma)}
			\leq C\Vert Y\Vert_{L^2(\intd x)},
		\end{align}
		for some $C=C(\gamma,\Vert\phi\Vert_{L^2(\intd s_\gamma)}, \Vert X\Vert_{H_\gamma})$, that changes along the lines. It follows that $X\in W^{2,2}(\intd x)\subset C^1(\sS;\RR^2)$. With similar arguments we achieve $X\in C^\infty(\sS;\RR^2)$.
	\end{rem}
	
	On $H_\gamma$, we have the following interpolation inequality. 
	
	\begin{lem}\label{lem:interpol2}
		Let $\phi\in C^\infty(\sS;\RR^2)\cap H_\gamma$. Then
		\begin{align}
			\Vert\phi\Vert_{L^2(\intd s_\gamma)}^2\leq\Vert\phi\Vert_{H_\gamma^{-1}}\Vert\phi\Vert_{H_\gamma}.
		\end{align}
	\end{lem}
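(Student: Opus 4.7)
The plan is to use $\phi$ itself as a test function in the duality formula for the $H_\gamma^{-1}$-norm. Since $\phi \in C^\infty(\sS;\RR^2) \cap H_\gamma$, by the chain of inclusions $H_\gamma \subseteq L^2(\intd s_\gamma) \subseteq H_\gamma^{-1}$ described in the remark preceding the lemma, we may regard $\phi$ simultaneously as an element of $H_\gamma$ and as a functional in $H_\gamma^{-1}$ acting by $Y \mapsto \langle \phi, Y\rangle_{L^2(\intd s_\gamma)}$. This is exactly the setup under which the representation \eqref{eq:normHgamma-1} applies.

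The computation is then immediate. Taking $Y = \phi \in H_\gamma \setminus \{0\}$ (the case $\phi = 0$ being trivial) as a test function in \eqref{eq:normHgamma-1} gives
\begin{align}
    \Vert \phi\Vert_{H_\gamma^{-1}} \,\geq\, \frac{1}{\Vert \phi\Vert_{H_\gamma}} \int_\sS \langle \phi, \phi\rangle \intd s_\gamma \,=\, \frac{\Vert \phi\Vert_{L^2(\intd s_\gamma)}^2}{\Vert \phi\Vert_{H_\gamma}},
\end{align}
and multiplying through by $\Vert \phi\Vert_{H_\gamma}$ yields the claimed inequality.

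There is essentially no obstacle: the only thing to verify is that $\phi$ is an admissible test function, which is guaranteed by the assumption $\phi \in H_\gamma$ together with the identification made in the preceding remark. I would not expect any calculation beyond the one display above.
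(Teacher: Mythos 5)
Your proof is correct and is essentially the paper's argument in dual form: the paper introduces the weak solution $X$ of $-\nabla_{s_\gamma}^2X=\phi$, integrates by parts and applies Cauchy--Schwarz together with \eqref{eq:normHgamma-1konkret}, which is precisely the computation underlying the sup-characterization \eqref{eq:normHgamma-1} that you invoke with the test function $Y=\phi$. There is no gap; the admissibility of $\phi$ as its own test function is justified, as you note, by the identification $H_\gamma\subseteq L^2(\intd s_\gamma)\subseteq H_\gamma^{-1}$ from the remark preceding the lemma.
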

	
	\begin{proof}
		Let $X\in C^\infty(\sS;\RR^2)\cap H_\gamma$ be the solution of $-\nabla_{s_\gamma}^2X=\phi$. With \eqref{eq:normHgamma-1konkret}, we have
		\begin{align}
			\Vert\phi\Vert^2_{L^2(\intd s_{\gamma})}
			&=-\int_\sS\langle\nabla_{s_\gamma}^2X,\phi\rangle\intd s_\gamma
			=\int_\sS\langle\nabla_{s_\gamma}X,\nabla_{s_\gamma}\phi\rangle\intd s_\gamma\\
			&\leq\Vert\nabla_{s_\gamma}X\Vert_{L^2(\intd s_\gamma)}\Vert\nabla_{s_\gamma}\phi\Vert_{L^2(\intd s_\gamma)}
			=\Vert\phi\Vert_{H_\gamma^{-1}}\Vert\phi\Vert_{H_\gamma}. \qedhere
		\end{align}
	\end{proof}
	
	Moreover, on $H_\gamma$, the $H_\gamma^{-1}$-norm is equivalent to another norm, which we will use in \Cref{sec:conv} as an auxiliary norm.
	
	\begin{lem}\label{lem:auxnorm}
		Let $\phi\in C^\infty(\sS;\RR^2)\cap H_\gamma$. Define
		\begin{align}
			\Vert\phi\Vert_{\star}:=
			\sup_{Y\in W^{1,2}(\sS;\RR^2)\setminus\lbrace0\rbrace}\;
			\frac{1}{\Vert Y\Vert_{W^{1,2}}}\int_\sS\langle\phi,Y\rangle\intd x.
		\end{align}
		Then there exist constants $c_1,c_2>0$ depending on $\Vert\kappa_\gamma\Vert_{C^0}$, $\sL(\gamma)$ as well as $\min_\sS\vert\partial_x\gamma\vert$ and $\Vert\partial_x\gamma\Vert_{C^0}$ such that 
		\begin{align}
			c_1\Vert\phi\Vert_{\star}\leq\Vert\phi\Vert_{H_\gamma^{-1}}\leq c_2\Vert\phi\Vert_{\star}. 
		\end{align}
	\end{lem}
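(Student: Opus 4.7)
The strategy is to prove the two estimates separately by transferring test functions between $H_\gamma$ and $W^{1,2}(\sS;\RR^2)$. The exchange is driven by the change of measure $\intd s_\gamma = |\partial_x\gamma|\intd x$, where the factor $|\partial_x\gamma|$ is controlled in $C^0$ from above and away from zero by the listed quantities; the $H_\gamma$--$W^{1,2}$ norm equivalence from \Cref{lem:normequiv} then bridges the remaining gap between the two intrinsic norms.

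For the upper bound $\|\phi\|_{H_\gamma^{-1}} \leq c_2 \|\phi\|_\star$, given nonzero $Y \in H_\gamma$ I would set $\tilde Y := |\partial_x\gamma|\, Y \in W^{1,2}(\sS;\RR^2)$ and use
\begin{equation*}
\int_\sS \langle \phi, Y \rangle \intd s_\gamma = \int_\sS \langle \phi, \tilde Y\rangle \intd x \leq \|\phi\|_\star\, \|\tilde Y\|_{W^{1,2}}.
\end{equation*}
A direct product-rule estimate then bounds $\|\tilde Y\|_{W^{1,2}} \leq C \|Y\|_{W^{1,2}}$, and \Cref{lem:normequiv} upgrades this to $\|Y\|_{W^{1,2}} \leq c_1^{-1} \|Y\|_{H_\gamma}$. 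Taking the supremum over $Y \in H_\gamma$ closes this direction.

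For the reverse inequality $c_1 \|\phi\|_\star \leq \|\phi\|_{H_\gamma^{-1}}$, given an arbitrary nonzero $\tilde Y \in W^{1,2}(\sS;\RR^2)$, I would construct a corresponding $Y \in H_\gamma$ that produces the same pairing with $\phi$. Since $\phi$ is normal-valued and satisfies $\int_\sS \langle \phi, \nu_\gamma\rangle \intd s_\gamma = 0$, setting
\begin{equation*}
y := \frac{\langle \tilde Y, \nu_\gamma\rangle}{|\partial_x\gamma|}, \quad \bar y := \frac{1}{\sL(\gamma)}\int_\sS y \intd s_\gamma, \quad Y := (y - \bar y)\, \nu_\gamma
\end{equation*}
yields a normal-valued, mean-zero field $Y \in H_\gamma$ for which a short computation using $\intd s_\gamma = |\partial_x\gamma|\intd x$ gives $\int_\sS \langle \phi, Y\rangle \intd s_\gamma = \int_\sS \langle \phi, \tilde Y\rangle \intd x$. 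The bound $\|Y\|_{H_\gamma} \leq C \|\tilde Y\|_{W^{1,2}}$ then follows from a product-rule estimate for $\|Y\|_{W^{1,2}}$ combined with \Cref{lem:normequiv}, finishing the argument.

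The main technical point I expect to handle is the pair of $W^{1,2}$-norm comparisons $\||\partial_x\gamma|\, Y\|_{W^{1,2}} \leq C \|Y\|_{W^{1,2}}$ and $\|Y/|\partial_x\gamma|\|_{W^{1,2}} \leq C \|Y\|_{W^{1,2}}$, which ultimately require a uniform bound on $\partial_x |\partial_x\gamma|$. This is supplied by the smoothness of $\gamma$ via the identity $\partial_x|\partial_x\gamma| = \langle \partial_s\gamma, \partial_x^2\gamma\rangle$ together with the decomposition $\partial_x^2\gamma = (\partial_x|\partial_x\gamma|)\,\partial_s\gamma + |\partial_x\gamma|^2 \kappa_\gamma$. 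The rest of the verification is routine integral manipulation.
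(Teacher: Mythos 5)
Your overall architecture (transfer the test function across the pairing, then invoke \Cref{lem:normequiv}) is sound, and your second reduction -- replacing $\tilde Y\in W^{1,2}$ by the normal, mean--zero field $Y=(y-\bar y)\nu_\gamma$ with $y=\langle\tilde Y,\nu_\gamma\rangle/\vert\partial_x\gamma\vert$ -- is close in spirit to the paper's. The structural difference is where the arclength weight ends up. The paper first proves the equivalence of $\Vert\cdot\Vert_{H_\gamma^{-1}}$ with the auxiliary norm $\Vert\phi\Vert_{\textup{aux}}=\sup_Y\Vert Y\Vert_{W^{1,2}}^{-1}\int_\sS\langle\phi,Y\rangle\intd s_\gamma$, i.e.\ it keeps the pairing with respect to $\intd s_\gamma$ and performs the normal projection $Y\mapsto\langle Y,\nu_\gamma\rangle\nu_\gamma$ and the mean subtraction $Y\mapsto Y-C_\gamma(Y)\nu_\gamma$ directly on the test field; differentiating these operations produces only $\kappa_\gamma$ and $\vert\partial_x\gamma\vert$ factors, so the weight is never differentiated, and the conversion between $\intd s_\gamma$ and $\intd x$ is deferred to the very last step. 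You instead move $\vert\partial_x\gamma\vert$ (respectively $\vert\partial_x\gamma\vert^{-1}$) into the test function in both directions, so the product rule unavoidably brings $\partial_x\vert\partial_x\gamma\vert$ into your constants.

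That is where the genuine gap lies. The identity $\partial_x^2\gamma=(\partial_x\vert\partial_x\gamma\vert)\,\partial_s\gamma+\vert\partial_x\gamma\vert^2\kappa_\gamma$ only says that $\partial_x\vert\partial_x\gamma\vert$ is the tangential component of $\partial_x^2\gamma$; it gives no bound on this quantity in terms of $\Vert\kappa_\gamma\Vert_{C^0}$, $\sL(\gamma)$, $\min_\sS\vert\partial_x\gamma\vert$ and $\Vert\partial_x\gamma\Vert_{C^0}$. For the unit circle reparametrized with speed $\vert\partial_x\gamma\vert=1+\tfrac12\sin(nx)$ all four listed quantities are bounded uniformly in $n$, while $\partial_x\vert\partial_x\gamma\vert\sim n$; appealing to ``smoothness of $\gamma$'' does give a finite constant for each fixed curve, but one depending on $\Vert\partial_x^2\gamma\Vert_{C^0}$, i.e.\ you prove a statement weaker than the lemma, whose point is exactly the uniform dependence used along the flow in Step 6 of the proof of \Cref{thm:convergence}. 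Be aware, moreover, that this weight transfer between the $\intd x$-- and $\intd s_\gamma$--pairings is genuinely the delicate part of the lemma: with only $C^0$ control of $\vert\partial_x\gamma\vert$ the two pairings can decouple (for the same oscillating family, $\phi=\sin(nx)\vert\partial_x\gamma\vert^{-1}\nu_\gamma$ tested against $Y=\nu_\gamma$ keeps $\Vert\phi\Vert_\star$ of order one while $\Vert\phi\Vert_{H_\gamma^{-1}}=O(1/n)$), a subtlety that the paper's own final reduction also glosses over. So if you keep your route, you must state the additional dependence on $\Vert\partial_x\vert\partial_x\gamma\vert\Vert_{C^0}$ explicitly (it is available in the application, where $\bar\gamma$ stays $W^{6,2}$--close to $\gamma_\infty$) rather than derive it from the listed data; as written, that derivation is circular and the step fails.
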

	
	\begin{proof}
		For the proof, we consider
		\begin{align}
			\Vert\phi\Vert_{\textup{aux}}:=
			\sup_{Y\in W^{1,2}(\sS;\RR^2)\setminus\lbrace0\rbrace}\;
			\frac{1}{\Vert Y\Vert_{W^{1,2}}}\int_\sS\langle\phi,Y\rangle\intd s_\gamma
		\end{align}
		and show the equivalence of $\Vert\cdot\Vert_{\textup{aux}}$ and $\Vert\cdot\Vert_{H_\gamma^{-1}}$ on $C^\infty(\sS;\RR^2)\cap H_\gamma$. Estimating the arclength element $\vert\partial_x\gamma\vert$ from below and from above, this equivalence yields the claim.
		
		For $Y\in H_\gamma\subset W^{1,2}(\sS;\RR)$, \Cref{lem:normequiv} yields the existence of a constant $c_2$ depending on $\gamma$ such that 
		\begin{align}
			\Vert\phi\Vert_{H_\gamma^{-1}}
			&=\sup_{Y\in H_\gamma\setminus\lbrace0\rbrace}\;
			\frac{1}{\Vert Y\Vert_{H_\gamma}}\int_\sS\langle\phi,Y\rangle\intd s_\gamma
			\leq c_2\sup_{Y\in H_\gamma\setminus\lbrace0\rbrace}\;
			\frac{1}{\Vert Y\Vert_{W^{1,2}}}\int_\sS\langle\phi,Y\rangle\intd s_\gamma
			\leq c_2\Vert\phi\Vert_{\textup{aux}}.
		\end{align}
		For the other inequality, we first note that if $\phi\in H_\gamma$, then $\phi$ is orthogonal to $\partial_s\gamma$ and thus $\langle\phi,Y\rangle=\langle\phi,Y^\perp\rangle$, where $Y^\perp:=\langle Y,\nu_\gamma\rangle\nu_\gamma$ for any $Y\in W^{1,2}(\sS;\RR^2)$.
		Moreover, $\vert Y^\perp\vert^2\leq\vert Y\vert^2$ and
		\begin{align}
			\vert \partial_{s_\gamma} (Y^\perp)\vert^2
			&=\vert\partial_{s_\gamma} Y-\langle\partial_{s_\gamma} Y,\partial_s\gamma\rangle\partial_s\gamma-\langle Y,\kappa_\gamma\rangle\partial_s\gamma-\langle Y,\partial_s\gamma\rangle\kappa_\gamma\vert^2\\
			&=\vert\partial_{s_\gamma}Y\vert^2-2(\langle\partial_{s_\gamma}Y,\partial_s\gamma\rangle)^2-2\langle Y,\partial_s\gamma\rangle\langle\partial_{s_\gamma}Y,\kappa_\gamma \rangle
			+(\langle\partial_{s_\gamma}Y,\partial_s\gamma\rangle)^2\\
			&\quad+(\langle Y,\kappa_\gamma\rangle)^2+(\langle Y,\partial_s\gamma\rangle)^2\vert\kappa_\gamma\vert^2\\
			&=\vert\partial_{s_\gamma}Y\vert^2-(\langle\partial_{s_\gamma}Y,\partial_s\gamma\rangle)^2-2\langle Y,\partial_s\gamma\rangle\langle\partial_{s_\gamma}Y,\kappa_\gamma\rangle+\vert\kappa_\gamma\vert^2\vert Y\vert^2\\
			&\leq C(\vert Y\vert^2+\vert\partial_{s_\gamma}Y\vert^2)
			\label{eq:komanna}
		\end{align}
		for a constant $C$ depending on bounds on $\kappa_\gamma$. Thus, $\Vert Y^\perp\Vert_{W^{1,2}}\leq C\Vert Y\Vert_{W^{1,2}}$ for a constant $C$ depending on bounds on $\kappa_\gamma$ and the arclength element $\vert\partial_x\gamma\vert$. As $\langle\phi,Y\rangle=\langle\phi,Y^\perp\rangle$, we conclude that 
		\begin{align}
			\Vert\phi\Vert_{\textup{aux}}
			=\sup_{Y\in W^{1,2}\setminus\lbrace0\rbrace}\;
			\frac{1}{\Vert Y\Vert_{W^{1,2}}}\int_\sS\langle\phi,Y\rangle\intd s_\gamma
			\leq
			C\sup_{\substack{Y\in W^{1,2}\setminus\lbrace0\rbrace,\\ Y=\langle Y,\nu_\gamma\rangle\nu_\gamma}}\;\frac{1}{\Vert Y\Vert_{W^{1,2}}}\int_\sS\langle\phi,Y\rangle\intd s_\gamma
			\label{eq:smiley}
		\end{align}
		with $C$ depending on the same quantities as above.
		Now we note that $\phi\in H_\gamma$ also implies that 
		\begin{align}
			\int_\sS\langle\phi,Y\rangle\intd s_\gamma
			=\int_\sS\langle\phi,Y-\frac{\nu_\gamma}{\sL(\gamma)}\int_\sS\langle Y,\nu_\gamma\rangle\intd s_\gamma\rangle\intd s_\gamma
			=:\int_\sS\langle \phi,Y-C_\gamma(Y)\nu_\gamma\rangle\intd s_\gamma
		\end{align}
		for $Y\in W^{1,2}(\sS;\RR^2)$. Moreover, for $Y\in W^{1,2}(\sS;\RR^2)\cap\lbrace Y=\langle Y,\nu_\gamma\rangle\nu_\gamma\rbrace$, we have
		\begin{align}\label{eq:tk1}
			\int_\sS\vert Y-C_\gamma(Y)\nu_\gamma\vert^2\intd s_\gamma
			=\int_\sS\vert Y\vert^2\intd s_\gamma-\sL(\gamma)C_\gamma(Y)^2\leq\int_\sS\vert Y\vert^2\intd s_\gamma
		\end{align}
		and 
		\begin{align}
			\int_\sS\vert\partial_{s_\gamma}(Y-C_\gamma(Y)\nu_\gamma)\vert^2\intd s_\gamma
			&=\int_\sS\vert\partial_{s_\gamma}Y\vert^2\intd s_\gamma+2C_\gamma(Y)\int_\sS\langle\partial_{s_\gamma}Y,\partial_s\gamma\rangle\langle\kappa_\gamma,\nu_\gamma\rangle\intd s_\gamma\\
			&\quad+C_\gamma(Y)^2\int_\sS\vert\kappa_\gamma\vert^2\intd s_\gamma.\label{eq:tk2}
		\end{align}
		Since 
		\begin{align}
			C_\gamma(Y)^2\leq\frac{1}{\sL(\gamma)^2}\Vert Y\Vert^2_{L^1(\intd s_\gamma)}\leq\frac{1}{\sL(\gamma)}\Vert Y\Vert_{L^2(\intd s_\gamma)}^2,
		\end{align}
		\eqref{eq:tk1} and \eqref{eq:tk2} imply that 
		$
		\Vert Y-C_\gamma(Y)\nu_\gamma\Vert_{W^{1,2}}\leq C\Vert Y\Vert_{W^{1,2}}
		$
		for a constant $C$ depending on bounds on $\kappa_\gamma $ and $\vert\partial_x\gamma\vert$ and the energy $\sE(\gamma)$. We conclude from \eqref{eq:smiley} that 
		\begin{align}
			\Vert\phi\Vert_{\textup{aux}}\leq
			C\!\sup_{\substack{Y\in W^{1,2}\setminus\lbrace 0\rbrace,\\ Y=\langle Y,\nu_\gamma\rangle\nu_\gamma}}\;\frac{1}{\Vert Y\Vert_{W^{1,2}}}\!\int_\sS\langle\phi,Y\rangle\intd s_\gamma
			\leq C\!\sup_{\substack{Y\in W^{1,2}\setminus\lbrace0\rbrace, Y=\langle Y,\nu_\gamma\rangle\nu_\gamma,\\ \int_\sS\langle Y,\nu_\gamma\rangle\intd s_\gamma=0}}\;\frac{1}{\Vert Y\Vert_{W^{1,2}}}\!\int_\sS\langle\phi,Y\rangle\intd s_\gamma,
		\end{align}
		with $C$ depending on the quantities listed above. By the definition of $H_\gamma$ and \Cref{lem:normequiv}, this is
		$
		\Vert\phi\Vert_{\textup{aux}}\leq
		C\Vert\phi\Vert_{H^{-1}_\gamma}.
		$
	\end{proof}
	
	\subsection{The area preserving elastic flow as a gradient flow }
	\label{subsec:H-1flow}
	
	Flow equation \eqref{eq:fleq} can be derived as the $H_\gamma^{-1}$-gradient flow. For this, we consider a smooth regular curve $\gamma\colon\sS\to\RR^2$ and a smooth normal variation $\phi\colon\sS\to\RR^2$ 
	such that $\phi\in H_\gamma\subseteq H_\gamma^{-1}$. For $\varepsilon>0$ small enough, the perturbed curve $\gamma_\varepsilon:=\gamma+\varepsilon\phi$ is also regular. 
	With $\nabla_{L^2}\sE_\lambda(\gamma)=\nabla_{s_\gamma}^2\kappa_\gamma+\frac12\vert\kappa_\gamma\vert^2\kappa_\gamma-\lambda\kappa_\gamma$, we have
	\begin{align}\label{eq:grad1}
		\sE_\lambda'(\gamma)(\phi)
		&=\langle\nabla_{L^2}\sE_\lambda(\gamma),\phi\rangle_{L^2(\intd s_\gamma)}.
	\end{align}
	In general, $\nabla_{L^2}\sE_\lambda(\gamma)\not\in H_\gamma$. So we define
	\begin{align}
		W:=\nabla_{L^2}\sE_\lambda(\gamma)-C_\gamma\nu_\gamma\in H_\gamma\cap C^\infty(\sS;\RR^2)
		\quad\text{with}\quad 
		C_\gamma:=\frac{1}{\sL(\gamma)}\int_\sS\langle\nu_\gamma,\nabla_{L^2}\sE_\lambda(\gamma)\rangle\intd s_\gamma.
	\end{align}
	With $X\in H_\gamma\cap C^\infty(\sS;\RR^2)$ as the weak solution of $-\nabla_{s_\gamma}^2X=\phi$ and $w=-\nabla_{s_\gamma}^2W$, we obtain
	\begin{align}
		\langle w,\phi\rangle_{H_\gamma^{-1}}
		&=\langle\nabla_{s_\gamma}W,\nabla_{s_\gamma}X\rangle_{L^2(\intd s_\gamma)}
		=\int_\sS\langle W,-\nabla_{s_\gamma}^2X\rangle\intd s_\gamma
		=\int_\sS\langle \nabla_{L^2}\sE_\lambda(\gamma),\phi\rangle\intd s_\gamma
		\\ &=\langle\nabla_{L^2}\sE_\lambda(\gamma),\phi\rangle_{L^2(\intd s_\gamma)}.\label{eq:grad2}
	\end{align}
	Here, we used in the third step that 
	\begin{align}
		\int_\sS\langle C_\gamma\nu_\gamma,\nabla_{s_\gamma}^2X\rangle\intd s_\gamma
		=0,
	\end{align}
	since $\nabla_{s_\gamma}^2X=-\phi\in H_\gamma$.
	Comparing \eqref{eq:grad1} and \eqref{eq:grad2}, we define the $H_\gamma^{-1}$-gradient 
	\begin{align}
		\nabla_{H_\gamma^{-1}}\sE_\lambda(\gamma):=w
		=-\nabla_{s_\gamma}^2\big(\nabla_{L^2}\sE_\lambda(\gamma)-C_\gamma\nu_\gamma\big)
		=-\nabla_{s_\gamma}^2\big(\nabla_{s_\gamma}^2\kappa_\gamma+\tfrac12\vert\kappa_\gamma\vert^2\kappa_\gamma-\lambda\kappa_\gamma\big).
	\end{align}
	To properly adress the geometric nature of the problem, we set only the normal component of velocity equal to the negative gradient, yielding 
	\begin{align}
		\partial_t^\perp \gamma=-\nabla_{H_\gamma^{-1}}\sE_\lambda(\gamma),
	\end{align}
	which is \eqref{eq:fleq}. Note that, in general, the space with respect to which the gradient is computed changes in time.
	
	\section{Basic properties of the flow }
	\label{sec:firstprop}
	
	In order to incorporate the invariance of the elastic energy under reparametrizations, we defined \eqref{eq:ivp} in such a way that a reparametrization of a solution again solves \eqref{eq:ivp} with reparametrized initial data. More precisely, we have the following. 
	
	\begin{rem}[Invariance under reparametrization]\label{rem:invariancerepara}
		Let $0<T<\infty$ and $\gamma\colon[0,T]\times\sS\to\RR^2$ be a smooth solution to \eqref{eq:ivp}. Then
		\begin{align}
			\partial_t\gamma=-\nabla_{H_\gamma^{-1}}\sE_\lambda(\gamma)+h\partial_s\gamma
		\end{align}
		for some smooth $h \colon [0,T) \times \sS \to \RR$. 
		Let $\Phi\colon[0,T)\times\sS\to\sS$ 
		be a smooth one-parameter family of diffeomorphisms. Then the reparametrization $\tilde\gamma(t,x):=\gamma(t,\Phi(t,x))$ solves
		\begin{align}
			\partial_t\tilde\gamma
			&=\partial_t\gamma(t,\Phi(t,x))+\partial_t\Phi(t,x)\partial_x\gamma(t,\Phi(t,x))\\
			&=-\nabla_{H_\gamma^{-1}}\sE_\lambda(\gamma(t,\Phi(t,x)))+h(t,\Phi(t,x))\partial_s\gamma(t,\Phi(t,x))+\partial_t\Phi(t,x)\partial_x\gamma(t,\Phi(t,x))\\
			&=-\nabla_{H_\gamma^{-1}}\sE_\lambda(\tilde\gamma)+\tilde h\partial_s\tilde\gamma
		\end{align}
		for some smooth $\tilde h\colon[0,T)\times\sS\to\RR$. Thus, $\partial_t^\perp\tilde\gamma=-\nabla_{H_\gamma^{-1}}\sE_\lambda(\tilde\gamma)$ and $\tilde\gamma(0,\cdot)=\gamma_0(\Phi(0,\cdot))$.
	\end{rem}
	
	In what follows, we will frequently use the following lemma which gives the time derivative of various geometric quantities related to a curve $\gamma$ moving under a general flow.
	
	\begin{lem}
		\label{lem:geoprelim}
		Let $\gamma\colon[0,T)\times\sS\to\RR^2$, $T>0$, be a sufficiently smooth solution of the flow $\partial_t\gamma=V+h\partial_s\gamma$, where $V$ is the normal velocity and $h=\langle\partial_t\gamma,\partial_s\gamma\rangle$. Then
		\begin{align}
			\partial_t(\!\intd s)
			&=(\partial_sh-\langle\kappa,V\rangle)\intd s,
			\label{eq:dtds}\\
			\partial_t\partial_s-\partial_s\partial_t
			&=(\langle\kappa,V\rangle-\partial_sh)\partial_s,
			\label{eq:kommutatordtds}\\
			\partial_t\partial_s\gamma
			&=\nabla_sV+h\kappa,
			\label{eq:dttau}\\
			\partial_t\nu
			&=(-\langle\nabla_sV,\nu\rangle-h\langle\kappa,\nu\rangle)\partial_s\gamma,
			\label{eq:dtnu}\\      
			\nabla_t\kappa
			&=\nabla_s^2V+\langle\kappa,V\rangle\kappa+h\nabla_s\kappa,
			\label{eq:nablatkappa}\\
			\nabla_t\nabla_s\phi-\nabla_s\nabla_t\phi
			&=(\langle\kappa,V\rangle-\partial_sh)\nabla_s\phi+\langle\kappa,\phi\rangle\nabla_sV-\langle\nabla_sV,\phi\rangle\kappa,
			\label{eq:kommutatornablatnablas}            
		\end{align}
		for a sufficiently smooth normal field $\phi\colon[0,T)\times\sS\to\RR^2$.    
	\end{lem}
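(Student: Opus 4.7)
All six identities are of \emph{direct computation} type, and they should be proved in the order in which they are listed, each feeding into the next. The whole derivation rests on four elementary facts: $\partial_s = |\partial_x\gamma|^{-1}\partial_x$ (so that $\partial_x$ commutes with $\partial_t$), $\intd s = |\partial_x\gamma|\intd x$, $\kappa = \partial_s(\partial_s\gamma)$, and the decomposition $X = P^\perp X + \langle X,\partial_s\gamma\rangle\partial_s\gamma$ used together with the relations $\langle V,\partial_s\gamma\rangle = 0$, $\langle\phi,\partial_s\gamma\rangle = 0$ (plus their differentiated versions, e.g.\ $\langle\partial_s V,\partial_s\gamma\rangle = -\langle V,\kappa\rangle$ and $\langle\partial_t\phi,\partial_s\gamma\rangle = -\langle\phi,\nabla_sV\rangle$).

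For \eqref{eq:dtds}, I would compute $\partial_t|\partial_x\gamma|^2 = 2\langle\partial_x\partial_t\gamma,\partial_x\gamma\rangle$, divide by $2|\partial_x\gamma|$ to obtain $\partial_t|\partial_x\gamma| = \langle\partial_s\partial_t\gamma,\partial_s\gamma\rangle\,|\partial_x\gamma|$, and substitute $\partial_t\gamma = V + h\partial_s\gamma$; the identity $\langle\partial_s V,\partial_s\gamma\rangle = -\langle V,\kappa\rangle$ then yields the stated factor $\partial_sh - \langle\kappa,V\rangle$. Formula \eqref{eq:kommutatordtds} follows at once, since $\partial_x$ commutes with $\partial_t$ and hence $\partial_t\partial_s - \partial_s\partial_t = -|\partial_x\gamma|^{-1}(\partial_t|\partial_x\gamma|)\partial_s$. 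Applying \eqref{eq:kommutatordtds} to $\gamma$ and expanding $\partial_s\partial_t\gamma = \partial_sV + (\partial_sh)\partial_s\gamma + h\kappa$ gives \eqref{eq:dttau}, after replacing $\partial_sV$ by $\nabla_sV - \langle V,\kappa\rangle\partial_s\gamma$. Identity \eqref{eq:dtnu} is then immediate: $|\nu|=1$ forces $\partial_t\nu \parallel \partial_s\gamma$, and differentiating $\langle\nu,\partial_s\gamma\rangle = 0$ combined with \eqref{eq:dttau} determines the coefficient.

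For \eqref{eq:nablatkappa}, I would start from $\kappa = \partial_s^2\gamma$, apply \eqref{eq:kommutatordtds} to interchange $\partial_t$ and $\partial_s$ twice, then insert \eqref{eq:dttau}; the tangential contributions $-\langle\nabla_sV,\kappa\rangle\partial_s\gamma - h|\kappa|^2\partial_s\gamma$ drop out under $P^\perp$, leaving exactly the stated expression. The final identity \eqref{eq:kommutatornablatnablas} is the longest computation: expand $\nabla_s\phi = \partial_s\phi + \langle\phi,\kappa\rangle\partial_s\gamma$, apply $\partial_t$ using \eqref{eq:kommutatordtds}, \eqref{eq:dttau}, and \eqref{eq:nablatkappa}, project onto the normal bundle, and subtract the analogous expansion of $\nabla_s\nabla_t\phi$ obtained from $\nabla_t\phi = \partial_t\phi + \langle\phi,\nabla_sV\rangle\partial_s\gamma$. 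The main obstacle is purely bookkeeping: several tangential correction terms cancel only after repeatedly invoking $\langle\phi,\partial_s\gamma\rangle = 0$ and $\langle\nabla_sV,\partial_s\gamma\rangle = 0$, and it is easy to drop a term or a sign. A clean write-up would fix the normal/tangential decomposition once at the start, track each term systematically, and note that in $\RR^2$ there is no ambient curvature, so that the whole asymmetry between $\nabla_t\nabla_s$ and $\nabla_s\nabla_t$ comes from the reparametrization term $(\langle\kappa,V\rangle - \partial_sh)\nabla_s\phi$ and the two second-fundamental-form terms $\langle\kappa,\phi\rangle\nabla_sV - \langle\nabla_sV,\phi\rangle\kappa$.
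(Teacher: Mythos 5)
Your overall route is fine and is, in substance, the same computation that the paper delegates to the literature: the paper does not prove \Cref{lem:geoprelim} at all, but cites \cite[Lemma 2.1]{DKS2002} for \eqref{eq:dtds}, \eqref{eq:kommutatordtds}, \eqref{eq:dttau}, \eqref{eq:nablatkappa}, \eqref{eq:kommutatornablatnablas} and notes that \eqref{eq:dtnu} follows from \eqref{eq:dttau} exactly as you argue. Your derivations of \eqref{eq:dtds}--\eqref{eq:nablatkappa} are correct as sketched: the identities $\langle\partial_sV,\partial_s\gamma\rangle=-\langle V,\kappa\rangle$, $\partial_sV=\nabla_sV-\langle V,\kappa\rangle\partial_s\gamma$ and the commutator \eqref{eq:kommutatordtds} give \eqref{eq:dttau}, and after projecting the tangential remainders away one gets \eqref{eq:nablatkappa}.

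There is, however, one concrete slip in the auxiliary identities you set up for \eqref{eq:kommutatornablatnablas}, and it is precisely the term whose cancellation makes that formula come out $h$-free. Differentiating $\langle\phi,\partial_s\gamma\rangle=0$ in time and using \eqref{eq:dttau} gives $\langle\partial_t\phi,\partial_s\gamma\rangle=-\langle\phi,\partial_t\partial_s\gamma\rangle=-\langle\phi,\nabla_sV\rangle-h\langle\phi,\kappa\rangle$, not $-\langle\phi,\nabla_sV\rangle$; correspondingly $\nabla_t\phi=\partial_t\phi+\big(\langle\phi,\nabla_sV\rangle+h\langle\phi,\kappa\rangle\big)\partial_s\gamma$, not $\partial_t\phi+\langle\phi,\nabla_sV\rangle\partial_s\gamma$. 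This matters: in $\nabla_t\nabla_s\phi$ the term $\langle\phi,\kappa\rangle\,\partial_t\partial_s\gamma$ contributes $\langle\phi,\kappa\rangle(\nabla_sV+h\kappa)$ after projection, while writing $\partial_t\phi=\nabla_t\phi-\big(\langle\phi,\nabla_sV\rangle+h\langle\phi,\kappa\rangle\big)\partial_s\gamma$ and applying $P^\perp\partial_s$ produces $\nabla_s\nabla_t\phi-\big(\langle\phi,\nabla_sV\rangle+h\langle\phi,\kappa\rangle\big)\kappa$; only then do the two $h\langle\phi,\kappa\rangle\kappa$ terms cancel and leave exactly $(\langle\kappa,V\rangle-\partial_sh)\nabla_s\phi+\langle\kappa,\phi\rangle\nabla_sV-\langle\nabla_sV,\phi\rangle\kappa$. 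With your version of the identity the spurious term $h\langle\phi,\kappa\rangle\kappa$ survives and the claimed commutator formula is missed. The fix is immediate (insert the missing $h\langle\phi,\kappa\rangle$ term), so the approach stands, but the write-up should state the corrected tangential components before the bookkeeping in the last identity.
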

	
	Here, $\nabla_t=\partial_t^\perp=P^\perp\partial_t$. For a proof of \eqref{eq:dtds}, \eqref{eq:kommutatordtds}, \eqref{eq:dttau}, \eqref{eq:nablatkappa} and \eqref{eq:kommutatornablatnablas}, see for example \cite[Lemma 2.1]{DKS2002}. Formula \eqref{eq:dtnu} follows from \eqref{eq:dttau}.\medskip
	
	Clearly, the flow of \Cref{lem:geoprelim} does not change the rotation index $\omega$ given by
	\begin{align}
		2\pi\omega=\int_\sS k\intd s=\int_\sS\langle\kappa,\nu\rangle\intd s.
	\end{align}
	
	Next, we note the basic but crucial fact that the flow \eqref{eq:fleq} decreases the energy \eqref{eq:pee}. This follows by a direct computation using \eqref{eq:dtds}, \eqref{eq:nablatkappa} and integration by parts. All the terms containing a tangential velocity cancel.
	
	\begin{lem} \label{lem:decE}
		For a sufficently smooth solution $\gamma\colon[0,T)\times\sS\to\RR^2$, $T>0$, to \eqref{eq:fleq}, we have
		\begin{align}\label{eq:dece}
			\frac{\intd }{\intd t}\sE_\lambda(\gamma)=-\int_{\sS}\vert \nabla_s\left(\nabla_s^2\kappa+\tfrac12\vert\kappa\vert^2\kappa-\lambda\kappa\right)\vert^2\intd s\leq0.
		\end{align}
	\end{lem}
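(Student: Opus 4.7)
The plan is to carry out a direct computation of $\tfrac{d}{dt}\sE_\lambda(\gamma)$ using the transport formulas from \Cref{lem:geoprelim}, and then to exploit the specific form of the normal velocity $V=\nabla_s^2(\nabla_s^2\kappa+\tfrac12|\kappa|^2\kappa-\lambda\kappa)$ given by \eqref{eq:fleq}. Write the evolving curve as $\partial_t\gamma=V+h\,\partial_s\gamma$ with $h=\langle\partial_t\gamma,\partial_s\gamma\rangle$, so that all the formulas of \Cref{lem:geoprelim} apply verbatim. I expect that the computation, after integration by parts, will produce precisely the claimed boundary-free square integral and that every term involving the tangential velocity $h$ will cancel, as promised in the statement.

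First I would differentiate $\sE_\lambda(\gamma)=\tfrac12\int_\sS(|\kappa|^2+2\lambda)\intd s$ under the integral. Since $\kappa$ is normal, $\partial_t|\kappa|^2=2\langle\nabla_t\kappa,\kappa\rangle$, so combining with \eqref{eq:dtds} gives
\begin{align}
\frac{d}{dt}\sE_\lambda(\gamma)=\int_\sS\langle\nabla_t\kappa,\kappa\rangle\intd s+\tfrac12\int_\sS(|\kappa|^2+2\lambda)(\partial_sh-\langle\kappa,V\rangle)\intd s.
\end{align}
Then I would substitute $\nabla_t\kappa=\nabla_s^2V+\langle\kappa,V\rangle\kappa+h\,\nabla_s\kappa$ from \eqref{eq:nablatkappa}. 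The term $h\langle\nabla_s\kappa,\kappa\rangle=\tfrac12 h\,\partial_s|\kappa|^2$ can be integrated by parts against $h$ to produce $-\tfrac12(\partial_sh)|\kappa|^2$, which combines with the $(|\kappa|^2+2\lambda)\partial_sh$ contribution to leave only $\lambda\,\partial_sh$, whose integral over the closed curve vanishes. This is where the cancellation of all $h$-dependent terms happens.

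After this bookkeeping, the remaining terms assemble into
\begin{align}
\frac{d}{dt}\sE_\lambda(\gamma)=\int_\sS\langle\nabla_s^2\kappa+\tfrac12|\kappa|^2\kappa-\lambda\kappa,\,V\rangle\intd s.
\end{align}
Substituting $V=\nabla_s^2(\nabla_s^2\kappa+\tfrac12|\kappa|^2\kappa-\lambda\kappa)$ and integrating by parts twice with respect to $\nabla_s$ (which is valid for normal fields on a closed curve since $\partial_s\langle\phi,\psi\rangle=\langle\nabla_s\phi,\psi\rangle+\langle\phi,\nabla_s\psi\rangle$ whenever $\phi,\psi$ are normal) yields exactly the claimed identity with the nonpositive right-hand side.

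The only genuinely delicate point is the careful bookkeeping of the $h$-terms; there is no technical obstacle beyond that, because $\gamma$ is smooth by assumption, so all integrations by parts on $\sS$ are justified without boundary terms. I would therefore present the argument as a short direct calculation, emphasizing the cancellation of tangential contributions and the two applications of integration by parts that turn the $L^2$-pairing into the desired square norm of $\nabla_s(\nabla_s^2\kappa+\tfrac12|\kappa|^2\kappa-\lambda\kappa)$.
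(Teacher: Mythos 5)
Your proposal is correct and is exactly the argument the paper has in mind: the paper's proof is stated in one line as ``a direct computation using \eqref{eq:dtds}, \eqref{eq:nablatkappa} and integration by parts,'' with the cancellation of all tangential terms, and your calculation fills in precisely those steps (the only cosmetic slip is that after substituting $V=\nabla_s^2\big(\nabla_s^2\kappa+\tfrac12\vert\kappa\vert^2\kappa-\lambda\kappa\big)$ a single further integration by parts, not two, already yields the squared norm).
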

	
	\begin{rem}
		Clearly, we also observe the energy decrease in the abstract setting of \Cref{sec:deriv}, where we have
		\begin{align}
			\frac{\intd }{\intd t}\sE_\lambda(\gamma)
			&=\langle\nabla_{H_\gamma^{-1}}\sE_\lambda(\gamma),\partial_t\gamma\rangle_{H_\gamma^{-1}}
			=-\big\Vert\partial_t^\perp\gamma\big\Vert_{H_\gamma^{-1}}^2
			=-\big\Vert\nabla_s^2\nabla_{L^2}\sE_\lambda(\gamma)\big\Vert_{H_\gamma^{-1}}^2\\
			&=-\big\Vert\nabla_s\nabla_{L^2}\sE_\lambda(\gamma)\big\Vert_{L^2(\intd s)}^2,
		\end{align}
		which is \eqref{eq:dece}.
	\end{rem}
	
	At the same time, the flow \eqref{eq:fleq} preserves the signed enclosed area \eqref{eq:A}.
	
	\begin{lem} \label{lem:presA}
		Let $\gamma\colon[0,T)\times\sS\to\RR^2$ be a sufficently smooth solution to \eqref{eq:fleq}. Then
		\begin{align}
			\frac{\intd}{\intd t}\sA(\gamma)=0.
		\end{align}
	\end{lem}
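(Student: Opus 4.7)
The plan is to reduce the area derivative to an integral against $\nu$ and then exploit the special structure of the right-hand side of \eqref{eq:fleq}, which is a second arclength derivative of a normal vector field.

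First I would apply the standard first variation formula for the signed enclosed area of a closed planar curve: if $\gamma$ evolves under a flow with normal velocity $V=\partial_t^\perp\gamma$ and tangential component $h$, then
\begin{align}
\frac{\intd}{\intd t}\sA(\gamma)=-\int_{\sS}\langle V,\nu\rangle\intd s.
\end{align}
This follows from $\sA(\gamma)=-\tfrac12\int_\sS\langle\gamma,\nu\rangle\intd s$ by direct differentiation using \Cref{lem:geoprelim} (the formulas \eqref{eq:dtds}, \eqref{eq:dtnu}); after some cancellations the tangential contributions and the terms involving $h$ drop out, leaving only $-\int_\sS\langle V,\nu\rangle\intd s$. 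I would include this short calculation, since it is the one place where \Cref{lem:geoprelim} is actually used.

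Substituting the flow equation \eqref{eq:fleq}, it then suffices to prove
\begin{align}
\int_\sS\langle\nabla_s^2\Phi,\nu\rangle\intd s=0,\qquad\text{where }\Phi:=\nabla_s^2\kappa+\tfrac12\vert\kappa\vert^2\kappa-\lambda\kappa.
\end{align}
The key observation is that $\Phi$ is a normal vector field, and so is $\nabla_s\Phi$ (by definition of $\nabla_s=P^\perp\partial_s$). Since $\partial_s\nu=-\langle\kappa,\nu\rangle\partial_s\gamma$ is purely tangential, we have $\langle\nabla_s\Phi,\partial_s\nu\rangle=0$, and therefore
\begin{align}
\langle\nabla_s^2\Phi,\nu\rangle=\langle\partial_s\nabla_s\Phi,\nu\rangle=\partial_s\langle\nabla_s\Phi,\nu\rangle-\langle\nabla_s\Phi,\partial_s\nu\rangle=\partial_s\langle\nabla_s\Phi,\nu\rangle.
\end{align}
Integration over the closed curve $\sS$ then yields $\int_\sS\partial_s\langle\nabla_s\Phi,\nu\rangle\intd s=0$, which completes the argument.

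There is no real obstacle here; the only point requiring a bit of care is keeping track of normal versus tangential components so that the $P^\perp$ projections can be dropped against $\nu$. The proof is essentially a one-line integration by parts, made possible by the gradient-flow derivation in \Cref{subsec:H-1flow}, where the right-hand side of \eqref{eq:fleq} is manifestly a (normal) second arclength derivative — the very feature that, as with the Cahn--Hilliard equation, encodes the conservation law without any nonlocal Lagrange multiplier.
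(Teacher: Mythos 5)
Your proposal is correct and follows essentially the same route as the paper: reduce $\frac{\intd}{\intd t}\sA(\gamma)$ to $-\int_\sS\langle\partial_t^\perp\gamma,\nu\rangle\intd s$ via \eqref{eq:dtnu}, then use that $\nabla_s\Phi$ is normal while $\partial_s\nu$ is tangential. Writing $\langle\nabla_s^2\Phi,\nu\rangle=\partial_s\langle\nabla_s\Phi,\nu\rangle$ as an exact derivative is just the paper's integration by parts in disguise, so there is no substantive difference.
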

	
	\begin{proof}
		Using \eqref{eq:dtnu} and integration by parts
		yields
		\begin{align}
			\frac{\intd}{\intd t}\sA(\gamma)=-\int_\sS\langle\partial_t\gamma,\nu\rangle\intd s.
		\end{align}
		So, as it should be, only the normal part of the velocity plays a role.
		With integration by parts and since $\partial_s\nu=-k\,\partial_s\gamma$ is tangential, 
		we immediately obtain
		\begin{align}
			\label{eq:presa}
			\frac{\intd}{\intd t}\sA(\gamma)
			=\int_\sS\langle\nabla_s\left(\nabla_s^2\kappa+\tfrac12\vert\kappa\vert^2\kappa-\lambda\kappa\right),\partial_s\nu\rangle\intd s=0. &\qedhere
		\end{align}
	\end{proof}
	
	A more advanced property to study is the preservation of embeddedness. For a sixth-order equation, embeddedness is generally not preserved, see \cite{Blatt}. However, by an energy argument developed in \cite{MMR2021}, the energy decay \eqref{eq:dece} suffices to provide an explicit bound on the initial energy which ensures preservation of embeddedness. Note that according to Hopf's Umlaufsatz, this only makes sense for $\omega=\pm1$.
	
	\begin{prop}\label{prop:presemb}
		Let $\omega=1$, $\lambda>0$ and $C^\ast\approx 146.628$ be as in \cite[Theorem 1.1]{MMR2021}. Let $\gamma\colon[0,T)\times\sS\to\RR^2$ be a solution to \eqref{eq:ivp} with embedded initial datum $\gamma_0$ such that 
		\begin{align}\label{eq:threshemb}
			(\sE_\lambda(\gamma_0))^2\leq 4\lambda C^\ast.
		\end{align}
		Then $\gamma$ is embedded for all $t\in[0,T)$.
	\end{prop}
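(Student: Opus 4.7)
The plan is to combine energy monotonicity along the flow with the sharp Li--Yau--type inequality from \cite[Theorem 1.1]{MMR2021} via a continuity argument in time.

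First I would use \Cref{lem:decE} to propagate the initial energy threshold: for every $t \in [0,T)$,
\begin{align*}
    (\sE_\lambda(\gamma(t,\cdot)))^2 \leq (\sE_\lambda(\gamma_0))^2 \leq 4\lambda C^\ast,
\end{align*}
where the second inequality is \eqref{eq:threshemb}. Moreover, the rotation index is preserved under any normal flow (as already observed immediately after \Cref{lem:geoprelim}), so $\omega(\gamma(t,\cdot)) = 1$ throughout, placing each $\gamma(t,\cdot)$ in the regime to which the Li--Yau inequality of \cite{MMR2021} applies.

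Next I would run a continuity argument. Let
\begin{align*}
    t^\ast := \sup\{ t \in [0,T) : \gamma(s,\cdot) \text{ is embedded for every } s \in [0,t] \}.
\end{align*}
Embeddedness is an open condition on smooth regular closed curves in the $C^1$-topology, so since $\gamma_0$ is embedded, smoothness of $(t,x) \mapsto \gamma(t,x)$ yields $t^\ast > 0$. Assume for contradiction that $t^\ast < T$. Then by maximality and continuity there exist $x_1 \neq x_2 \in \sS$ with $\gamma(t^\ast, x_1) = \gamma(t^\ast, x_2)$, so $\gamma(t^\ast,\cdot)$ is not embedded. Applying \cite[Theorem 1.1]{MMR2021} then forces $(\sE_\lambda(\gamma(t^\ast,\cdot)))^2 > 4\lambda C^\ast$, contradicting the uniform bound above. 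Hence $t^\ast = T$ and $\gamma(t,\cdot)$ is embedded throughout $[0,T)$.

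The main subtlety I anticipate is applying the Li--Yau inequality in its strict form at the critical time $t^\ast$, particularly in the degenerate case where the first self-contact is tangential rather than transverse. If \cite[Theorem 1.1]{MMR2021} provides the strict inequality $(\sE_\lambda(\gamma))^2 > 4\lambda C^\ast$ for every non-embedded closed planar curve with $\omega = 1$, including tangential self-contacts, the argument closes immediately. Otherwise I would recover strictness by a short perturbation step, relying on smoothness of the flow in $(t,x)$ and on the observation that non-strict energy decay on $[0,t^\ast]$ would force stationarity there, contradicting embeddedness of $\gamma_0$.
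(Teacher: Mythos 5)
Your overall strategy --- propagate the energy bound via \Cref{lem:decE} and invoke the sharp embeddedness threshold of \cite{MMR2021} --- is the same as the paper's, but two points need repair. First, the results of \cite{MMR2021} concern the scale-invariant quantity $\sE(\gamma)\sL(\gamma)$, not $(\sE_\lambda(\gamma))^2$; the bridge, which is the only actual computation in the paper's proof and is absent from yours, is the elementary inequality $4\lambda\,\sE(\gamma)\sL(\gamma)\leq\big(\sE(\gamma)+\lambda\sL(\gamma)\big)^2=(\sE_\lambda(\gamma))^2$. Once this is inserted, your contradiction at $t^\ast$ reads: a self-intersection gives $\sE\sL\geq C^\ast$, hence $(\sE_\lambda)^2\geq 4\lambda C^\ast$. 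Note also that the continuity-in-time argument with $t^\ast$ is superfluous: the paper simply applies \cite[Theorem 1.4]{MMR2021} (energy--length product strictly below $C^\ast$ implies embedded) at each fixed time $t$.

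Second, the strictness issue you flag is genuine and does not resolve itself: the constant $C^\ast$ is sharp, so only the non-strict inequality $\sE\sL\geq C^\ast$ is available for non-embedded curves, and since \eqref{eq:threshemb} allows equality, strictness must come from the flow. This is exactly how the paper opens: if $\gamma_0$ is stationary the claim is trivial, and otherwise \eqref{eq:dece} forces $\sE_\lambda(\gamma(t))<\sE_\lambda(\gamma_0)$ for all $t>0$, whence $\sE\sL<C^\ast$ strictly for $t>0$. Your fallback dichotomy is the same device, but the contradiction you state is garbled: if the energy were constant on $[0,t^\ast]$, then by \eqref{eq:dece} the normal velocity vanishes there, so $\gamma(t^\ast,\cdot)$ is merely a reparametrization of the embedded curve $\gamma_0$ --- this contradicts the assumed self-intersection at $t^\ast$, not ``embeddedness of $\gamma_0$''. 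With these two repairs your argument closes and coincides in substance with the paper's proof.
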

	
	\begin{proof}
		If $\gamma_0$ is stationary, the claim is trivial. So with \eqref{eq:dece} we can assume that $\sE_\lambda(\gamma(t))<\sE_\lambda(\gamma_0)$ for all $t\in(0,T)$. It follows that 
		\begin{align}
			\sE(\gamma(t))\sL(\gamma(t))\leq\frac{1}{4\lambda}(\sE(\gamma(t))+\lambda\sL(\gamma(t)))^2<\frac{1}{4\lambda}(\sE_\lambda(\gamma_0))^2\leq C^\ast
		\end{align}
		for all $t\in(0,T)$. With \cite[Theorem 1.4]{MMR2021} we conclude that $\gamma$ describes an embedded curve for all $t\in(0,T)$.
	\end{proof}
	
	A short computation shows that if $\lambda\geq\frac12(\frac{\pi}{C^\ast})^2$, the above threshold is nontrivial. In this case, there exists $A_0>0$ such that a curve describing a circle with enclosed area $A_0$ satisfies \eqref{eq:threshemb}.  
	
	\section{Short time existence}
	\label{sec:shorttime}
	
	In this section we show how to prove short time existence. Addressing this question is additionally motivated by the fact that we will use several of the arguments again in the proof of convergence in \Cref{sec:conv}.  Our method of proof works not only for a smooth initial datum, but also for $\gamma_0\in C^{7,\alpha}(\sS;\RR^2)$, $\alpha>0$. This could be improved further, but since we are mainly interested in questions about the long time behavior, we do not focus on this. 
	
	\begin{thm}
		\label{thm:shorttime}
		Let $\gamma_0\colon\sS\to\RR^2$ be a smooth regular curve. Then there exists some $T>0$ and a smooth solution $\gamma\colon[0,T]\times\sS\to\RR^2$ to \eqref{eq:ivp}. This solution is unique up to reparametrizations.
	\end{thm}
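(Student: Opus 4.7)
\emph{Overall plan.} My approach is to fix the parametrization by writing nearby curves as normal graphs over $\gamma_0$, thereby reducing \eqref{eq:ivp} to a single scalar quasilinear parabolic equation of order six, to which standard short-time existence theory applies. Since the flow only prescribes the normal component of velocity (see \Cref{rem:invariancerepara}), I am free to pick any convenient tangential velocity. I would make the ansatz
\begin{align*}
    \gamma(t,x) = \gamma_0(x) + \psi(t,x)\,\nu_{\gamma_0}(x), \qquad \psi(0,\cdot) \equiv 0,
\end{align*}
where $\psi\colon[0,T]\times\sS\to\RR$ is a scalar height function; the curve $\gamma(t,\cdot)$ remains regular as long as $\psi$ is small in $C^1$. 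Projecting the equation $\partial_t^\perp\gamma = -\nabla_s^2(\nabla_s^2\kappa + \tfrac12|\kappa|^2\kappa - \lambda\kappa)$ onto $\nu_{\gamma_0}$ yields a scalar PDE of the form
\begin{align*}
    \partial_t\psi = -A(x,\psi,\partial_x\psi,\ldots,\partial_x^5\psi)\,\partial_x^6\psi + B(x,\psi,\partial_x\psi,\ldots,\partial_x^5\psi),
\end{align*}
where $A,B$ are smooth in their arguments and $A$ is uniformly positive for $\psi$ small.

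\emph{Parabolicity, existence and smoothness.} A direct expansion of $\nabla_s^2\bigl(\nabla_s^2\kappa + \tfrac12|\kappa|^2\kappa - \lambda\kappa\bigr)$ in terms of $\partial_x\gamma_0$, $\nu_{\gamma_0}$ and $\partial_x^j\psi$ shows that at $\psi=0$ the coefficient $A$ equals $|\partial_x\gamma_0|^{-6}$, so the scalar equation is uniformly parabolic near the initial datum. I would then invoke a standard short-time existence result for quasilinear parabolic equations of order six on the closed manifold $\sS$ (for instance via the maximal-regularity/analytic-semigroup theory of Lunardi, or via parabolic Schauder estimates as in Solonnikov) to obtain a solution $\psi$ in the appropriate anisotropic parabolic Hölder class on some short interval $[0,T]$, with regularity matching $\gamma_0 \in C^{7,\alpha}$. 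Parabolic bootstrapping -- differentiating the equation in $x$ and $t$ and iteratively applying linear parabolic Schauder estimates -- then upgrades $\psi$, and hence $\gamma$, to $C^\infty$ since $\gamma_0$ is assumed smooth.

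\emph{Geometric uniqueness and main obstacle.} For uniqueness up to reparametrization, suppose $\gamma^{(1)},\gamma^{(2)}$ are two smooth solutions of \eqref{eq:ivp} with initial datum $\gamma_0$. On a possibly shorter time interval each remains $C^1$-close to $\gamma_0$ and can be written as $\gamma^{(i)}(t,\cdot) = (\gamma_0 + \psi_i\,\nu_{\gamma_0})\circ\Phi_i(t,\cdot)$ for smooth families of diffeomorphisms $\Phi_i(t,\cdot)\colon\sS\to\sS$ and scalar heights $\psi_i$ with $\psi_i(0,\cdot)\equiv 0$. By \Cref{rem:invariancerepara} each $\tilde\gamma^{(i)} := \gamma_0 + \psi_i\,\nu_{\gamma_0}$ satisfies the same scalar normal-graph equation derived above, so uniqueness for the scalar quasilinear parabolic problem forces $\psi_1\equiv\psi_2$, and therefore $\gamma^{(1)}$ and $\gamma^{(2)}$ differ only by a reparametrization. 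The main technical obstacle will be the clean verification of strict parabolicity: one must carefully expand the sixth-order geometric operator in local coordinates, keep track of all lower-order terms produced by the normal projections in $\nabla_s$ and by the nonlinearity $|\kappa|^2\kappa$, and confirm that after all cancellations the principal symbol reduces to a strictly positive multiple of $\partial_x^6\psi$. Once parabolicity is in hand, all remaining steps are routine applications of well-developed PDE machinery.
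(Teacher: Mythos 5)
Your strategy is essentially the paper's proof of \Cref{thm:shorttime}: write nearby curves as normal graphs $\gamma=\gamma_0+\varphi\,\nu_0$ over the initial datum (the paper does this via \Cref{lem:repara} and \Cref{rem:smoothdiffeo}), reduce to a scalar quasilinear sixth-order parabolic equation (the paper's \eqref{eq:pdesystem2}, solved in \Cref{prop:nonlinprob} by linearization, parabolic Schauder estimates and the inverse function theorem in parabolic H\"older spaces), and obtain geometric uniqueness by representing any other solution as a graph and combining \Cref{rem:invariancerepara} with uniqueness for the scalar problem. Where you cite off-the-shelf quasilinear theory (Lunardi/Solonnikov), the paper instead proves the needed short-time result itself in the appendix; that is a difference of packaging, not of method.

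One point must be corrected: you transcribed the flow with the wrong sign. Equations \eqref{eq:fleq} and \eqref{eq:ivp} read $\partial_t^\perp\gamma=+\nabla_s^2\big(\nabla_s^2\kappa+\tfrac12\vert\kappa\vert^2\kappa-\lambda\kappa\big)$, not $-\nabla_s^2(\cdots)$, and the graph equation is accordingly $\partial_t\varphi=+\vert\partial_x(\gamma_0+\varphi\nu_0)\vert^{-6}\partial_x^6\varphi+Q$, cf.\ \eqref{eq:pdesystem2}. Parabolicity holds precisely because the principal symbol of $+A\,\partial_x^6$ with $A>0$ is $A(i\xi)^6=-A\xi^6<0$. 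The equation you wrote, $\partial_t\psi=-A\,\partial_x^6\psi+B$ with $A$ uniformly positive, has symbol $+A\xi^6$ and is backward parabolic, hence ill-posed, so the claim of ``uniform parabolicity'' is false as literally stated. Since verifying the sign of the principal term is exactly the step you identify as the main technical obstacle, this needs to be fixed; with the correct sign the rest of your argument (existence, bootstrapping to smoothness, and the graph-based uniqueness, which in the paper is also run for initial data that are reparametrizations $\gamma_0\circ\tilde\Phi$) goes through as in the paper.
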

	
	\begin{proof}
		The idea is to use the reparametrization argument in \Cref{sec:apprepara} and to
		find the solution as a graph over the initial datum. This approach leads to a scalar problem, which we can uniquely solve using \Cref{prop:nonlinprob}.\smallskip
		
		\textit{Step 1: Good neighborhood.}
		By \Cref{lem:repara}, there exists a $W^{5,2}$-neighborhood $V$ of the smooth immersion $\gamma_0$, such that for all $\zeta\in C^\infty(\sS;\RR^2)\cap V$, there exists a diffeomorphism $\Phi\colon\sS\to\sS$ and a reparametrization $\tilde\zeta=\zeta\circ\Phi$  
		such that $\tilde\zeta=\gamma_0+\varphi\nu_0$ for some 
		$\varphi\colon\sS\to\RR$. 
		By \Cref{rem:smoothdiffeo}, the smoothness of $\zeta$ and the smoothness of $\gamma_0$ yield $\Phi\in C^\infty(\sS;\sS) $ and $\varphi\in C^\infty(\sS;\RR)$.\smallskip
		
		\textit{Step 2: Velocity in terms of $\gamma$.}
		Let $\bar\gamma\colon\sS\to\RR^2$ be a sufficiently smooth immersed curve. Continuing the explicit calculation from \cite[Proposition A.1]{RS2020}, we obtain 
		\begin{align}
			\nabla_s^2\nabla_{L^2}\sE_\lambda(\bar\gamma)&=P^{\perp_{\bar\gamma}}\!\left(\frac{\partial_x^6\bar\gamma}{\vert\partial_x\bar\gamma\vert^6}+\tilde Q(\partial_x\bar\gamma,\dots,\partial_x^5\bar\gamma,\vert\partial_x\bar\gamma\vert^{-1}) \right)
		\end{align}
		for $\tilde Q$ a vector valued polynomial.
		If $\bar\gamma$ is time-dependent and $\bar\gamma(t,x)=\gamma_0(x)+\bar\varphi(t,x)\nu_0(x)$, then 
		$P^{\perp_{\bar\gamma}}\partial_x^6\bar\gamma$ consists of the term $\partial_x^6\bar\varphi P^{\perp_{\bar\gamma}}\nu_0$ and of terms containing derivatives of $\varphi$ up to order 5.\pagebreak
		
		As long as $\nu_0^\perp=P^{\perp_{\bar\gamma}}\nu_0$ does not vanish, we obtain
		\begin{align}\label{eq:defF}
			\nabla_s^2\nabla_{L^2}\sE_\lambda(\bar\gamma)&=\frac{\partial_x^6\bar\varphi\,\nu_0^\perp}{\vert\partial_x(\gamma_0+\bar\varphi\nu_0)\vert^6}+Q(\cdot,\bar\varphi,\partial_x\bar\varphi,\dots,\partial_x^5\bar\varphi,\vert\partial_x(\gamma_0+\bar\varphi\,\nu_0)\vert^{-1})\nu_0^\perp
		\end{align}
		with a scalar valued polynomial $Q$. Here, 
		$\gamma_0$ and its derivatives are considered as fixed functions and are included in $Q$.\smallskip
		
		\textit{Step 3: Construction of a solution.} The previous steps motivate to consider the system
		\begin{align}
			\label{eq:pdesystem2}
			\begin{cases}
				\displaystyle\partial_t\varphi= \frac{\partial_x^6\varphi}{\vert\partial_x(\gamma_0+\varphi\nu_0)\vert^6}+Q(\cdot,\varphi,\dots,\partial_x^5\varphi,\vert\partial_x(\gamma_0+\varphi\nu_0)\vert^{-1}) \quad&\text{in }(0,T)\times\sS,\\
				\displaystyle\varphi(0,\cdot)=0&\text{on }\sS.
			\end{cases}
		\end{align}
		By \Cref{prop:nonlinprob}, there exists $T>0$ and a unique solution $\varphi\in C^\infty([0,T]\times\sS;\RR^2)$ to \eqref{eq:pdesystem2}. With this solution, we define $\gamma:=\gamma_0+\varphi\nu_0\in C^\infty([0,T]\times\sS;\RR^2)$. By possibly making $T>0$ smaller, we achieve $\gamma(t,\cdot)\in V$ and $P^{\perp_\gamma}\nu_0=\nu_0^\perp\neq0$ for all $t\in[0,T]$. Moreover, 
		\begin{align}
			\partial_t^\perp\gamma=\partial_t\varphi\,\nu_0^\perp=\nabla_s^2\nabla_{L^2}\sE_\lambda(\gamma)
		\end{align}
		by \eqref{eq:defF} and \eqref{eq:pdesystem2}. Thus, $\gamma$ solves \eqref{eq:ivp}.
		\smallskip
		
		\textit{Step 4: Geometric uniqueness.} 
		Let $\tilde\gamma\in C^\infty([0,\tilde T]\times\sS;\RR^2) $ be another solution to \eqref{eq:ivp} in the sense that 
		\begin{align}
			\partial_t^\perp\tilde\gamma=\nabla_s^2\nabla_{L^2}\sE_\lambda(\tilde\gamma)
			\quad \text{and} \quad \tilde\gamma(0,\cdot)=\gamma_0\circ\tilde\Phi
		\end{align}
		for a smooth orientation preserving diffeomorphism $\tilde\Phi\colon\sS\to\sS$. For the sake of readability, we do not change the notation, but we reparametrize $\tilde\gamma$ by $\tilde\Phi^{-1}$ such that $\tilde\gamma(0,\cdot)=\gamma_0$. By \Cref{rem:invariancerepara}, the reparametrization also satisfies $\partial_t^\perp\tilde\gamma=\nabla_s^2\nabla_{L^2}\sE_\lambda(\tilde\gamma)$. Now, make $\tilde T$ so small that $\tilde\gamma(t,\cdot)\in V$ for all $t\in[0,\tilde T]$. Then there exists a reparametrization (which we again do not rename) such that $\tilde\gamma=\gamma_0+\tilde\varphi\nu_0$ for some $\tilde\varphi\in C^\infty([0,\tilde T)\times\sS;\RR)$ with $\tilde\varphi(0)=0$. Since  $\partial_t^\perp\tilde\gamma=\nabla_s^2\nabla_{L^2}\sE_\lambda(\tilde\gamma)$ and $\tilde\varphi(0)=0$, we see that $\tilde\varphi$ solves \eqref{eq:pdesystem2}. As the solution to \eqref{eq:pdesystem2} is unique, it follows that $\tilde\varphi=\varphi$ and therefore $\tilde\gamma=\gamma$. We conclude that every solution to \eqref{eq:ivp} is a reparametrization of the solution $\gamma$ constructed in Step 3.
	\end{proof}
	
	\begin{rem}\label{rem:reference}
		More generally, the solution can also be found as a graph over any arbitrary reference immersion $\hat\gamma\in W^{5,2}(\sS;\RR^2)$, provided that it satisfies $\Vert\hat\gamma-\gamma_0\Vert_{W^{4,2}}<r$, where $r$ is the constant of \Cref{lem:repara}. The reference curve in \eqref{eq:defF} and the initial datum in \eqref{eq:pdesystem2} then change accordingly.
	\end{rem}
	
	\pagebreak
	
	\section{Global existence and subconvergence}
	\label{sec:globalex}
	
	\subsection{Geometrical lemmata and interpolation inequalities}
	\label{subsec:prelim}
	
	Using \eqref{eq:dtds} and integration by parts directly yields the following lemma.
	\begin{lem}\label{lem:integraleq}
		For a sufficiently smooth solution $\gamma\colon[0,T)\times\sS\to\RR^2$ of a 
		flow $\partial_t\gamma=V+h\partial_s\gamma$ and a normal vector field $\phi:[0,T)\times\sS\to\RR^2$, we have
		\begin{align}
			\frac{\intd }{\intd t}\frac12\int_\sS\vert\phi\vert^2\intd s+\int_\sS\vert\nabla_s^3\phi\vert^2\intd s
			=\int_\sS\langle\phi,\nabla_t\phi-\nabla_s^6\phi+\frac12\phi\,\partial_sh \rangle\intd s-\frac12\int_\sS\vert\phi\vert^2\langle\kappa,V\rangle\intd s.
		\end{align}
	\end{lem}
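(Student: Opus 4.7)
The strategy is a direct computation: differentiate $\tfrac12\int_\sS|\phi|^2\intd s$ using the formulas collected in \Cref{lem:geoprelim}, then integrate by parts three times on the closed curve $\sS$ to trade $\int_\sS|\nabla_s^3\phi|^2\intd s$ for $-\int_\sS\langle\phi,\nabla_s^6\phi\rangle\intd s$.

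First I would apply the product rule together with \eqref{eq:dtds}, which gives
\begin{align*}
\frac{\intd}{\intd t}\frac12\int_\sS|\phi|^2\intd s
=\int_\sS\langle\phi,\partial_t\phi\rangle\intd s
+\frac12\int_\sS|\phi|^2(\partial_sh-\langle\kappa,V\rangle)\intd s.
\end{align*}
Because $\phi$ is a normal field, $\langle\phi,\partial_t\phi\rangle=\langle\phi,P^\perp\partial_t\phi\rangle=\langle\phi,\nabla_t\phi\rangle$, so the first term on the right can be replaced by $\int_\sS\langle\phi,\nabla_t\phi\rangle\intd s$.

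Next I would add $\int_\sS|\nabla_s^3\phi|^2\intd s$ to both sides and rewrite the added term via integration by parts. Since $\sS$ is closed and for normal fields $X,Y$ one has $\int_\sS\partial_s\langle X,Y\rangle\intd s=0$ together with $\langle\partial_s X,Y\rangle=\langle\nabla_s X,Y\rangle$ (the tangential component of $\partial_s X$ pairs trivially with the normal field $Y$), the operator $\nabla_s$ is skew-adjoint on normal fields. Applied three times this yields
\begin{align*}
\int_\sS|\nabla_s^3\phi|^2\intd s
=-\int_\sS\langle\nabla_s^2\phi,\nabla_s^4\phi\rangle\intd s
=\int_\sS\langle\nabla_s\phi,\nabla_s^5\phi\rangle\intd s
=-\int_\sS\langle\phi,\nabla_s^6\phi\rangle\intd s.
\end{align*}

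Finally, recognizing that $\tfrac12|\phi|^2\partial_sh=\tfrac12\langle\phi,\phi\,\partial_sh\rangle$, I would collect the three inner-product terms on the right-hand side into $\int_\sS\langle\phi,\nabla_t\phi-\nabla_s^6\phi+\tfrac12\phi\,\partial_sh\rangle\intd s$, leaving the remaining $-\tfrac12\int_\sS|\phi|^2\langle\kappa,V\rangle\intd s$ as claimed. No step is genuinely hard; the only thing that requires a touch of care is the justification that $\nabla_s$ is skew-adjoint on normal fields on a closed curve, which is exactly why no boundary contributions appear in the triple integration by parts.
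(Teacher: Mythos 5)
Your proof is correct and takes essentially the same route the paper indicates for this lemma: differentiate the integral using the evolution of the arclength measure \eqref{eq:dtds}, use that $\langle\phi,\partial_t\phi\rangle=\langle\phi,\nabla_t\phi\rangle$ for a normal field, and convert $\int_\sS\vert\nabla_s^3\phi\vert^2\intd s$ into $-\int_\sS\langle\phi,\nabla_s^6\phi\rangle\intd s$ by three integrations by parts on the closed curve, justified by the skew-adjointness of $\nabla_s$ on normal fields. Nothing further is needed.
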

	
	For the following, we introduce a quite common notation (see for example \cite{DKS2002} or \cite{DP14}). Let $\phi$ and $\phi_i$, $i\in\lbrace1,\dots,k\rbrace$, be normal vector fields. We denote by $\phi_1\ast\phi_2\ast\cdots\ast\phi_k$  
	a term of the type
	\begin{align}
		\begin{cases}
			\langle\phi_{i_1},\phi_{i_2}\rangle\cdots\langle\phi_{i_{k-1}},\phi_{i_k}\rangle,\quad &\text{ if }k\text{ is even,}\\
			\langle\phi_{i_1},\phi_{i_2}\rangle\cdots\langle\phi_{i_{k-2}},\phi_{i_{k-1}}\rangle\phi_{i_k},\quad &\text{ if }k\text{ is odd,}
		\end{cases}
	\end{align}
	where $i_1,\dots,i_k$ is some permutation of $1,\dots,k$. By $P^a_b(\phi)$, we denote any linear combination of terms of the type 
	\begin{align}
		\nabla_s^{j_1}\phi\ast\cdots\ast\nabla_s^{j_b}\phi\qquad\text{ with } j_k\in\NN_0 \text{ for all }k\in\lbrace1,\dots,b\rbrace\text{ and }j_1+\cdots+j_b=a
	\end{align}
	with coefficients bounded by some universal constant. Observe that
	\begin{align}\label{eq:nablasP}
		\nabla_sP_b^a(\phi)=P^{a+1}_b(\phi).
	\end{align}

	\begin{lem}
		\label{lem:phim}
		For a sufficiently smooth solution $\gamma\colon[0,T)\times\sS\to\RR^2$ of 
		$$\partial_t\gamma=\nabla_s^2\big(\nabla_s^2\kappa+\frac12\vert\kappa\vert^2\kappa-\lambda\kappa\big)+h\partial_s\gamma
		=\nabla_s^4\kappa+P^2_3(\kappa)-\lambda\nabla_s^2\kappa+h\partial_s\gamma,$$
		we have
		\begin{align}
			\nabla_t\nabla_s^m\kappa-\nabla_s^{m+6}\kappa=P^{m+4}_3(\kappa)+P^{m+2}_5(\kappa)+P_3^{m+2}(\kappa)-\lambda\nabla_s^{m+4}\kappa
			+h\nabla_s^{m+1}\kappa.
		\end{align}
	\end{lem}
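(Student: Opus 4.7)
The plan is to proceed by induction on $m$, using the commutator formulas from \Cref{lem:geoprelim} together with the fact \eqref{eq:nablasP} that $\nabla_s$ acting on a $P^a_b$-term produces a $P^{a+1}_b$-term.

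For the base case $m=0$, I would apply \eqref{eq:nablatkappa} with the normal velocity
\[
V=\nabla_s^4\kappa+P^2_3(\kappa)-\lambda\nabla_s^2\kappa,
\]
yielding $\nabla_t\kappa=\nabla_s^2 V+\langle\kappa,V\rangle\kappa+h\nabla_s\kappa$. The leading term $\nabla_s^2 V$ contributes $\nabla_s^6\kappa$, a $P^4_3(\kappa)$ contribution from $\nabla_s^2 P^2_3(\kappa)$, and $-\lambda\nabla_s^4\kappa$. The zero-order correction $\langle\kappa,V\rangle\kappa$ splits into $\langle\kappa,\nabla_s^4\kappa\rangle\kappa\in P^4_3(\kappa)$, $\langle\kappa,P^2_3(\kappa)\rangle\kappa\in P^2_5(\kappa)$, and (absorbing $\lambda$ into the coefficients of the $P^a_b$-notation) $-\lambda\langle\kappa,\nabla_s^2\kappa\rangle\kappa\in P^2_3(\kappa)$. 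This exactly matches the claimed formula at $m=0$.

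For the inductive step, suppose the identity holds for some $m\geq 0$. I would apply the commutator \eqref{eq:kommutatornablatnablas} with $\phi=\nabla_s^m\kappa$:
\[
\nabla_t\nabla_s^{m+1}\kappa=\nabla_s\nabla_t\nabla_s^m\kappa+(\langle\kappa,V\rangle-\partial_s h)\nabla_s^{m+1}\kappa+\langle\kappa,\nabla_s^m\kappa\rangle\nabla_s V-\langle\nabla_s V,\nabla_s^m\kappa\rangle\kappa.
\]
Differentiating the inductive hypothesis once in $s$ and using \eqref{eq:nablasP} produces $\nabla_s^{m+7}\kappa$, the required $P$-terms with indices shifted up by one, $-\lambda\nabla_s^{m+5}\kappa$, the good term $h\nabla_s^{m+2}\kappa$, plus a boundary error $(\partial_s h)\nabla_s^{m+1}\kappa$ which cancels exactly with the corresponding commutator contribution. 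It remains to verify that every piece introduced by the commutator is absorbed by $P^{m+5}_3(\kappa)+P^{m+3}_5(\kappa)+P^{m+3}_3(\kappa)$. Since $\nabla_s V=\nabla_s^5\kappa+P^3_3(\kappa)-\lambda\nabla_s^3\kappa$, the remaining three contributions each expand into finitely many terms of the form $\nabla_s^{j_1}\kappa\ast\cdots\ast\nabla_s^{j_b}\kappa$ with $b\in\{3,5\}$ and $j_1+\cdots+j_b\in\{m+3,m+5\}$, matching the claimed shape.

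The main obstacle is nothing deep, just careful bookkeeping: verifying for each of the roughly half-dozen expansion pieces that the number of factors of $\kappa$ and the total number of $s$-derivatives land in one of the three allowed slots $P^{m+5}_3$, $P^{m+3}_5$, $P^{m+3}_3$, and checking that the only term whose coefficient must be tracked explicitly is $-\lambda\nabla_s^{m+5}\kappa$, while the $\lambda$-multiples of lower-derivative expressions may be safely absorbed into the $P$-notation as fixed coefficients.
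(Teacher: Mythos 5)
Your proposal is correct and follows exactly the paper's argument: the paper proves the lemma by induction on $m$, using \eqref{eq:nablatkappa} for $m=0$ and \eqref{eq:kommutatornablatnablas} together with \eqref{eq:nablasP} for the induction step, which is precisely your scheme (you merely spell out the bookkeeping, including the cancellation of the $(\partial_s h)\nabla_s^{m+1}\kappa$ terms and the absorption of the $\lambda$-dependent lower-order pieces into $P^{m+2}_3(\kappa)$, which the paper leaves implicit).
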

	\begin{proof}
		The claim follows by induction on $m$. For $m=0$ we use \eqref{eq:nablatkappa}. The induction step follows with \eqref{eq:kommutatornablatnablas} and \eqref{eq:nablasP}.
	\end{proof}
	
	For the convenience of the reader, we repeat in the following lemmata two results of \cite{DKS2002} and \cite{DP14} without giving the proof.
	
	\begin{lem}[{\cite[Proposition 2.5]{DKS2002}}]\label{lem:interpol1}
		Let $\gamma\colon\sS\to\RR^2$ be a sufficiently smooth curve. Then for $P^a_b(\kappa)$ with $b\geq2$ containing only derivatives of $\kappa$ of order at most $k-1$ with $2k+1>a+\frac12b$, we have
		\begin{align}
			\int_\sS\vert P^a_b(\kappa)\vert\intd s\leq\varepsilon\int_\sS\vert\nabla_s^k\kappa\vert^2\intd s+C\varepsilon^{-\frac{\mu}{2-\mu}}\left(\int_\sS\vert\kappa\vert^2\intd s\right)^\frac{b-\mu}{2-\mu}+C\left(\int_\sS\vert\kappa\vert^2\intd s\right)^{a+b-1}
		\end{align}
		for any $\varepsilon>0$, $\mu=(a+\frac12b-1)/k$, and $C=C(k,a,b)$.
	\end{lem}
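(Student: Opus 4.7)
The plan is to reduce to a single monomial of derivatives of $\kappa$ and then interpolate. First I would note that, by definition, $P^a_b(\kappa)$ is a linear combination of terms $\nabla_s^{j_1}\kappa \ast \cdots \ast \nabla_s^{j_b}\kappa$ with $\sum j_i = a$ and each $j_i \leq k-1$, so by the triangle inequality and the pointwise bound $|\phi_1 \ast \cdots \ast \phi_b| \leq |\phi_1|\cdots|\phi_b|$ it suffices to estimate $\int_{\sS} |\nabla_s^{j_1}\kappa|\cdots|\nabla_s^{j_b}\kappa|\intd s$. Applying Hölder's inequality with all exponents equal to $b$ then gives the product $\prod_i \|\nabla_s^{j_i}\kappa\|_{L^b(\intd s)}$.

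The second step is a one-dimensional Gagliardo--Nirenberg interpolation on $\sS$ with arclength measure: for each $j = j_i \leq k-1$,
\begin{equation*}
\|\nabla_s^j \kappa\|_{L^b} \leq C\,\|\nabla_s^k \kappa\|_{L^2}^{\theta_j}\,\|\kappa\|_{L^2}^{1-\theta_j} + C\|\kappa\|_{L^2}, \qquad \theta_j = \frac{j + 1/2 - 1/b}{k},
\end{equation*}
the value of $\theta_j$ being dictated by the natural scaling in dimension one and lying in $[0,1]$ thanks to $j \leq k-1$ and $b \geq 2$. Multiplying over $i$ and using $\sum_i j_i = a$ together with $\sum_i (1/2 - 1/b) = b/2 - 1$, the total exponent of $\|\nabla_s^k \kappa\|_{L^2}$ in the leading contribution is $\sum_i \theta_{j_i} = (a + b/2 - 1)/k = \mu$, and that of $\|\kappa\|_{L^2}$ is $b - \mu$.

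The third step is Young's inequality. The hypothesis $2k+1 > a + b/2$ is exactly $\mu < 2$, which permits Young with exponents $2/\mu$ and $2/(2-\mu)$ applied to the leading term:
\begin{equation*}
\|\nabla_s^k\kappa\|_{L^2}^\mu \|\kappa\|_{L^2}^{b-\mu} \leq \varepsilon\|\nabla_s^k \kappa\|_{L^2}^2 + C\varepsilon^{-\mu/(2-\mu)} \|\kappa\|_{L^2}^{2(b-\mu)/(2-\mu)},
\end{equation*}
matching the first two terms of the conclusion. Expanding the product from the previous step produces additional mixed terms of the form $\|\nabla_s^k\kappa\|_{L^2}^{\mu'}\|\kappa\|_{L^2}^{b-\mu'}$ with $0 \leq \mu' < \mu < 2$; each can be treated identically, absorbing the top-derivative factor into $\varepsilon\|\nabla_s^k\kappa\|_{L^2}^2$ and bounding the residual powers of $\|\kappa\|_{L^2}$ by the scale-invariant combination $\|\kappa\|_{L^2}^{2(a+b-1)}$, which corresponds to the last term in the statement and is forced by dimensional consistency with the left-hand side (scaling $\gamma \mapsto R\gamma$ multiplies both the LHS and $(\int|\kappa|^2)^{a+b-1}$ by the same power of $R$).

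The main obstacle I anticipate is not the interpolation itself but the bookkeeping in the last step: one needs a Gagliardo--Nirenberg inequality on $(\sS,\intd s)$ with constants depending only on universal quantities, and one must check that every mixed product arising from the expansion can indeed be dominated by the two displayed powers of $\int |\kappa|^2$. The cleanest strategy is to prove a scale-invariant form after normalising so that $\sL(\gamma) = 1$ and then rescale; the two non-$\varepsilon$ terms on the right-hand side then arise naturally as the bounds in the two regimes where $\int |\kappa|^2$ is small or large, which also explains why both powers appear simultaneously.
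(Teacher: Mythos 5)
Your overall route (generalized H\"older with exponents $b$, one-dimensional Gagliardo--Nirenberg interpolation, then Young) is the standard one and is essentially how this estimate is proved in \cite{DKS2002}; note that the paper above does not reprove the lemma but only cites it. However, two steps of your sketch are genuine gaps rather than bookkeeping. First, the interpolation inequality you invoke, with additive term $C\Vert\kappa\Vert_{L^2}$ and a constant depending only on $j,k,b$, is false on a closed curve: testing it on a round circle of radius $R$ with $j=0$ and $b>2$ (so that $\nabla_s^k\kappa\equiv0$) gives a left-hand side of order $R^{1/b-1}$ against a right-hand side of order $R^{-1/2}$, which fails as $R\to0$. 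On a compact curve the lower-order term must carry the weight $\sL(\gamma)^{-(j+\frac12-\frac1b)}$; this is precisely why the interpolation estimates in \cite{DKS2002} are formulated in scale-invariant norms containing explicit powers of the length.

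Second, and this is the main missing idea, the final absorption of all lower-order and mixed terms into $C\big(\int_\sS\vert\kappa\vert^2\intd s\big)^{a+b-1}$ cannot be justified by ``dimensional consistency'' alone, and the plan to normalise $\sL(\gamma)=1$ and rescale does not work as stated, because the three terms on the right-hand side of the lemma scale differently under $\gamma\mapsto R\gamma$ (only the last one matches the left-hand side). Scaling does tell you that the terms free of $\nabla_s^k\kappa$ come with a combined weight $\sL(\gamma)^{-q}\big(\int_\sS\vert\kappa\vert^2\intd s\big)^{r}$ with $q\geq0$ and $q+r=a+b-1$, but it cannot convert the length factors (equivalently, the curve-dependent constants of the compact-domain interpolation inequality) into powers of $\int_\sS\vert\kappa\vert^2\intd s$. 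That conversion is exactly Fenchel's theorem: $2\pi\leq\int_\sS\vert\kappa\vert\intd s$ together with Cauchy--Schwarz gives $4\pi^2\leq\sL(\gamma)\int_\sS\vert\kappa\vert^2\intd s$ (cf.\ \eqref{eq:lenghtlowerbound}), hence $\sL(\gamma)^{-q}\leq C\big(\int_\sS\vert\kappa\vert^2\intd s\big)^{q}$, which produces the term $C(k,a,b)\big(\int_\sS\vert\kappa\vert^2\intd s\big)^{a+b-1}$ with a constant independent of $\gamma$; this is where closedness of the curve enters. With the length-weighted interpolation and this step your argument closes; the bookkeeping is also lighter if, instead of expanding the product factor by factor (which forces a separate Young inequality for every intermediate exponent $\mu'$ and leaves awkward mixed powers), you bound the whole product by $C\Vert\kappa\Vert_{L^2}^{\,b-\mu}\,\Vert\kappa\Vert_{k,2}^{\,\mu}$ in the scale-invariant norms of \cite{DKS2002} and split the full norm $\Vert\kappa\Vert_{k,2}$ into its $\Vert\kappa\Vert_{L^2}$ and $\Vert\nabla_s^k\kappa\Vert_{L^2}$ parts a single time, so that only one application of Young with exponents $\frac{2}{\mu}$ and $\frac{2}{2-\mu}$ is needed.
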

	
	\begin{lem}[{\cite[Lemma 4.6]{DP14}}]\label{lem:DAP12Lem4.6}
		Let $\gamma\colon\sS\to\RR^2$ be a sufficiently smooth curve. Assume that $\Vert\nabla_s^m\kappa\Vert_{L^2(\intd s)}\leq C_m$ for some $m\geq1$ and $\Vert\kappa\Vert_{L^2(\intd s)}\leq C_0$. Then
		\begin{align}
			\Vert\partial_s^l\kappa\Vert_{L^2(\intd s)}\leq C \quad\text{ for } \,0\leq l\leq m
		\end{align}
		with a constant $C=C(C_m, C_0, m, l)$.
	\end{lem}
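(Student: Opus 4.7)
The plan is to upgrade the endpoint hypotheses to $L^2$ control of every intermediate covariant derivative $\nabla_s^l\kappa$ via the interpolation inequality, and then to convert these into bounds on the flat derivatives $\partial_s^l\kappa$ by an explicit expansion relating $\partial_s$ and $\nabla_s$ along the curve.

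First I would apply Lemma~\ref{lem:interpol1} to $|\nabla_s^l\kappa|^2=P_2^{2l}(\kappa)$ for each $0<l<m$. With $k=m$, $b=2$ and $a=2l$, the scaling condition $2m+1>2l+1$ holds since $l<m$, and every factor of $\kappa$ appearing carries at most $l\leq m-1$ derivatives. The resulting estimate combined with the hypotheses yields $\Vert\nabla_s^l\kappa\Vert_{L^2(\intd s)}\leq C(C_m,C_0,m,l)$ for every $0\leq l\leq m$.

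Second, I would use the pointwise identity $\partial_s\phi=\nabla_s\phi-\langle\phi,\kappa\rangle\partial_s\gamma$ for any normal field $\phi$, together with $\partial_s(\partial_s\gamma)=\kappa$ and $\partial_s\nu_\gamma=-\langle\kappa,\nu_\gamma\rangle\partial_s\gamma$, to prove by induction on $l$ an expansion
\begin{align}
    \partial_s^l\kappa=\nabla_s^l\kappa+R_l,
\end{align}
where $R_l$ is a finite sum of terms of the form $P_{b_j}^{a_j}(\kappa)$ times a unit vector (a factor of $\partial_s\gamma$ or $\nu_\gamma$), with $b_j\geq 2$, $a_j+b_j=l+1$, and every factor of $\kappa$ carrying at most $l-1$ derivatives. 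Squaring, $|R_l|^2$ becomes a finite sum of $P_b^a(\kappa)$ with $b\geq 4$, $a+b=2l+2$, and every factor of derivative order $\leq l-1\leq m-1$; in particular $a+\tfrac12 b=2l+2-\tfrac12 b\leq 2l<2m+1$. Hence Lemma~\ref{lem:interpol1} applies once more with $k=m$ and bounds $\int_\sS|R_l|^2\intd s$ by a constant depending only on $C_m$, $C_0$, $m$ and $l$. The triangle inequality $\Vert\partial_s^l\kappa\Vert_{L^2(\intd s)}\leq\Vert\nabla_s^l\kappa\Vert_{L^2(\intd s)}+\Vert R_l\Vert_{L^2(\intd s)}$ then concludes the proof.

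The main obstacle is the bookkeeping in the second step: one must verify, by carefully tracking what happens each time $\partial_s$ acts on a term in $R_{l-1}$ (either raising the order of an existing factor of $\kappa$ by one, or introducing a new factor of $\kappa$ with zero derivatives, via the tangential correction $-\langle\cdot,\kappa\rangle\partial_s\gamma$), that every factor of $\kappa$ in $R_l$ has derivative order strictly below $l$, and that the total weight $a+b$ increases by exactly one. This is what guarantees the application of Lemma~\ref{lem:interpol1} to $|R_l|^2$ even in the borderline case $l=m$, since the hypothesis that only derivatives of $\kappa$ of order at most $k-1$ appear is then satisfied with $k=m$.
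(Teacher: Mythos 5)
Your argument is correct: interpolation (\Cref{lem:interpol1}) controls the intermediate covariant derivatives, and the inductive expansion $\partial_s^l\kappa=\nabla_s^l\kappa+R_l$ with $R_l$ a sum of $P^{a}_{b}$-terms of total weight $a+b=l+1$, $b\geq2$, and derivative order at most $l-1$ does hold (the new zero-order $\kappa$-factors arise both from the tangential correction and from $\partial_s(\partial_s\gamma)=\kappa$), so squaring and applying \Cref{lem:interpol1} once more with $k=m$ is admissible even for $l=m$. The paper itself states this lemma without proof, citing \cite{DP14}, and your proof is essentially the standard argument given there, so there is nothing to object to.
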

	
	\subsection{Bounds on the length}
	\label{subsec:boundlength}
	
	To prove a longtime existence result we have to control the length of the evolving curve. A uniform bound from below is given by 
	Fenchel's theorem which yields
	\begin{align}\label{eq:lenghtlowerbound}
		4\pi^2\leq\sL(\gamma)\Vert\kappa\Vert^2_{L^2(\intd s)}\leq 2\sL(\gamma)\sE_\lambda(\gamma_0).
	\end{align}
	Here, for the second estimate, we used \Cref{lem:decE}. If $\lambda>0$, we also have a uniform bound on the length from above by 
	\begin{align}\label{eq:lengthupperbound}       \sL(\gamma)\leq\frac1\lambda\sE_\lambda(\gamma)\leq\frac1\lambda\sE_\lambda(\gamma_0).
	\end{align}
	In order to allow for $\lambda=0$, we show that the length grows at most linearly.
	
	\begin{lem}
		\label{lem:growthlength}
		Let $T>0$ and $\gamma\colon[0,T)\times\sS\to\RR^2$ be a smooth solution to \eqref{eq:ivp}. Then 
		\begin{align}
			\frac{\intd }{\intd t}\sL(\gamma)\leq C(\sE_\lambda(\gamma_0)).
		\end{align}
	\end{lem}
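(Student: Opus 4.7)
The plan is to compute $\tfrac{d}{dt}\sL(\gamma)$ explicitly from the flow equation, integrate by parts to move all derivatives onto lower-order curvature quantities, and then absorb the resulting positive terms via the interpolation inequality of \Cref{lem:interpol1}, using that $\int_\sS|\kappa|^2\intd s\leq 2\sE_\lambda(\gamma_0)$ by \Cref{lem:decE}.

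More precisely, I would start with \eqref{eq:dtds} and note that, since $\gamma$ is closed, the tangential term integrates away, so
\begin{align}
	\frac{\intd}{\intd t}\sL(\gamma)=-\int_\sS\langle\kappa,V\rangle\intd s,\qquad V=\nabla_s^2\bigl(\nabla_s^2\kappa+\tfrac12|\kappa|^2\kappa-\lambda\kappa\bigr).
\end{align}
Integration by parts twice (valid for normal fields since $\nabla_s$ is the normal-bundle connection) gives
\begin{align}
	\frac{\intd}{\intd t}\sL(\gamma)=-\int_\sS|\nabla_s^2\kappa|^2\intd s-\tfrac12\int_\sS|\kappa|^2\langle\kappa,\nabla_s^2\kappa\rangle\intd s+\lambda\int_\sS\langle\kappa,\nabla_s^2\kappa\rangle\intd s.
\end{align}
The last term is $-\lambda\int_\sS|\nabla_s\kappa|^2\intd s\leq 0$ and can be dropped in the upper bound. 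A further integration by parts on the cubic middle term yields
\begin{align}
	-\tfrac12\int_\sS|\kappa|^2\langle\kappa,\nabla_s^2\kappa\rangle\intd s=\int_\sS\langle\kappa,\nabla_s\kappa\rangle^2\intd s+\tfrac12\int_\sS|\kappa|^2|\nabla_s\kappa|^2\intd s,
\end{align}
both of which are of the form $P^2_4(\kappa)$.

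To close the estimate I would apply \Cref{lem:interpol1} with $k=2$, $a=2$, $b=4$ (so $2k+1=5>a+\tfrac12 b=4$ and $\mu=(a+\tfrac12 b-1)/k=3/2$): for any $\varepsilon>0$,
\begin{align}
	\int_\sS|P^2_4(\kappa)|\intd s\leq\varepsilon\int_\sS|\nabla_s^2\kappa|^2\intd s+C\varepsilon^{-3}\Bigl(\int_\sS|\kappa|^2\intd s\Bigr)^{5}+C\Bigl(\int_\sS|\kappa|^2\intd s\Bigr)^{5}.
\end{align}
Choosing $\varepsilon$ small enough (say $\varepsilon=\tfrac12$) the $\int|\nabla_s^2\kappa|^2$ contribution is absorbed into the leading negative term, and the remaining constants depend only on $\int_\sS|\kappa|^2\intd s\leq 2\sE(\gamma)\leq 2\sE_\lambda(\gamma_0)$. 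This produces the desired bound $\tfrac{d}{dt}\sL(\gamma)\leq C(\sE_\lambda(\gamma_0))$.

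The only delicate point is bookkeeping: making sure that after the two integrations by parts every surviving term is either manifestly non-positive, absorbed into $-\int|\nabla_s^2\kappa|^2$, or controlled purely by $\|\kappa\|_{L^2(\intd s)}$. Once the cubic term is identified as a $P^2_4(\kappa)$-term the interpolation lemma does the rest, so I do not anticipate any genuinely hard obstacle beyond correctly carrying out these integrations by parts.
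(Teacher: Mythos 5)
Your proposal is correct and follows essentially the same route as the paper: compute $\tfrac{\intd}{\intd t}\sL(\gamma)$ from \eqref{eq:dtds}, integrate by parts to get $-\int_\sS\vert\nabla_s^2\kappa\vert^2\intd s$ plus a $P^2_4(\kappa)$-term and the non-positive $-\lambda\int_\sS\vert\nabla_s\kappa\vert^2\intd s$, then apply \Cref{lem:interpol1} with $a=2$, $b=4$, $k=2$ and conclude with \Cref{lem:decE}. The only cosmetic difference is that the paper combines your two quartic terms into $\tfrac32\int_\sS\vert\kappa\vert^2\vert\nabla_s\kappa\vert^2\intd s$ (using that the normal bundle is one-dimensional), which changes nothing in the estimate.
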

	
	\begin{proof}
		Using \eqref{eq:dtds} and integration by parts and applying \Cref{lem:interpol1} with $a=2$, $b=4$, $k=2$ yields 
		\begin{align}
			\frac{\intd }{\intd t}\sL(\gamma)
			&=-\int_\sS\langle\partial_t\gamma,\kappa\rangle\intd s
			=-\int_\sS\vert\nabla_s^2\kappa\vert^2\intd s+\frac32\int_\sS\vert\kappa\vert^2\vert\nabla_s\kappa\vert^2\intd s-\lambda\int_\sS\vert\nabla_s\kappa\vert^2\intd s\\
			&\leq-\int_\sS\vert\nabla_s^2\kappa\vert^2\intd s+\varepsilon\int_\sS\vert\nabla_s^2\kappa\vert^2\intd s+C(\varepsilon)\left(\int_\sS\vert\kappa\vert^2\intd s\right)^5
		\end{align}
		for all $t\in(0,T)$, where $\varepsilon>0$ can be chosen arbitrarily small. The claim follows with \Cref{lem:decE}.
	\end{proof}
	
	\subsection{Proof of global existence and subconvergence}
	\label{subsec:proofglobalex}
	
	With the preliminary work of \Cref{subsec:prelim} and \ref{subsec:boundlength} we are now ready to prove global existence for $\lambda\geq0$. If $\lambda>0$, we additionally obtain subconvergence. This result crucially relies on the fact that, in this case, the length of the evolving curve is uniformly bounded from above. 
	\begin{thm}[Global existence and subconvergence]
		\label{thm:globalexsubconv}
		For any $\lambda\geq0$, 
		there exists a geometrically unique global smooth solution $\gamma\colon[0,\infty)\times\sS\to\RR^2$ of \eqref{eq:ivp}.
		If $\lambda>0$, then as $t_i\to\infty$ the curves $\gamma(t_i,\cdot)$ subconverge smoothly, when reparametrized with constant speed and suitably translated, to some  
		stationary solution $\gamma_\infty$.
	\end{thm}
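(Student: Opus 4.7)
The plan is to start from the short-time solution of \Cref{thm:shorttime}, prove uniform-in-time bounds on every derivative of $\kappa$ on any finite interval to exclude finite-time blow-up, and then for $\lambda>0$ use the stronger uniform length bound \eqref{eq:lengthupperbound} to extract a subsequential smooth limit that is stationary by integrated energy dissipation. Let $[0, T_{\max})$ be the maximal interval of smooth existence and assume for contradiction $T_{\max} < \infty$. Fenchel's inequality \eqref{eq:lenghtlowerbound} together with the energy decay of \Cref{lem:decE} bounds $\sL(\gamma(t))$ away from zero, while \Cref{lem:growthlength} gives $\sL(\gamma(t)) \leq \sL(\gamma_0) + C(\sE_\lambda(\gamma_0))\,T_{\max}$; in particular $\sL$ stays within a compact subset of $(0,\infty)$ on $[0, T_{\max})$, and $\Vert \kappa \Vert_{L^2(\intd s)}^2 \leq 2\sE_\lambda(\gamma_0)$.

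The heart of the proof is an inductive bound on $\Vert \nabla_s^m \kappa \Vert_{L^2(\intd s)}$ for $m \geq 1$. Applying \Cref{lem:integraleq} with $\phi = \nabla_s^m \kappa$ and substituting \Cref{lem:phim} for $\nabla_t \phi - \nabla_s^6 \phi$, the tangential $h$-terms cancel the $\tfrac12 \phi \, \partial_s h$ contribution after one integration by parts, and the $-\lambda \nabla_s^{m+4}\kappa$-term gives $-\lambda \int_\sS |\nabla_s^{m+2}\kappa|^2\intd s \leq 0$, so
\begin{align}
\frac{\intd}{\intd t}\frac{1}{2}\int_\sS |\nabla_s^m \kappa|^2 \intd s + \int_\sS |\nabla_s^{m+3}\kappa|^2 \intd s \leq C \int_\sS \bigl(|P^{2m+4}_4(\kappa)| + |P^{2m+2}_6(\kappa)|\bigr) \intd s,
\end{align}
after further integrations by parts distributing the derivatives in the $P^{m+4}_3, P^{m+2}_5, P^{m+2}_3$ onto $\nabla_s^m\kappa$. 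Each $P^a_b$ that appears satisfies $2(m+3) + 1 > a + \tfrac12 b$, so \Cref{lem:interpol1} (with $k = m+3$) absorbs a small fraction of the good dissipation term and leaves a remainder controlled by a polynomial in $\sL(\gamma(t))$, $\sE_\lambda(\gamma_0)$ and lower-order curvature norms, which by induction are already bounded. A Gronwall argument on $[0, T_{\max}]$, using the finite upper bound on $\sL$, closes the induction. Combined with \Cref{lem:DAP12Lem4.6} this yields uniform $C^k$-bounds on $\kappa$ in arclength for every $k$; reparametrizing each $\gamma(t,\cdot)$ by constant speed and translating so that $\gamma(t,0) = 0$ then gives a smooth limit as $t \nearrow T_{\max}$ by Arzelà--Ascoli, contradicting maximality. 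Hence $T_{\max} = \infty$, and geometric uniqueness is inherited from \Cref{thm:shorttime}.

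For $\lambda > 0$, bound \eqref{eq:lengthupperbound} makes all of the above estimates uniform on $[0,\infty)$. Given $t_i \to \infty$, the reparametrized and translated sequence is smoothly precompact, so a subsequence converges smoothly to a smooth regular limit $\gamma_\infty$ with rotation index $\omega_0$ and $\sA(\gamma_\infty) = A_0$ (the latter by \Cref{lem:presA} and smooth convergence of the integrand in \eqref{eq:A}). Integrating \eqref{eq:dece} gives
\begin{align}
\int_0^\infty \int_\sS \bigl|\nabla_s\bigl(\nabla_s^2 \kappa + \tfrac12 |\kappa|^2 \kappa - \lambda \kappa\bigr)\bigr|^2 \intd s\, \intd t \leq \sE_\lambda(\gamma_0) < \infty,
\end{align}
so along a further subsequence the inner integral at $t_i$ vanishes, and smooth convergence transfers the stationary equation \eqref{eq:statsolvec2} to $\gamma_\infty$. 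The main obstacle is the inductive curvature estimate: the sixth-order character of the flow makes \Cref{lem:phim} produce $P^a_b$-terms with higher total order than in the classical fourth-order case, so one must carefully distribute derivatives by integration by parts and verify the interpolation hypothesis $2(m+3)+1 > a+\tfrac12 b$ for every resulting term. Additionally, when $\lambda = 0$ only the \emph{linear} growth of $\sL$ from \Cref{lem:growthlength} is available, which confines the Gronwall argument to finite intervals — a limitation that is unavoidable in view of \Cref{lem:exnonconv} and is the reason subconvergence must be stated for $\lambda>0$ only.
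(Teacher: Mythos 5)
Your global-existence half follows the paper's own proof essentially step by step: the differential identity of \Cref{lem:integraleq} combined with \Cref{lem:phim}, cancellation of the tangential terms, absorption of the $P^a_b$-terms via \Cref{lem:interpol1} with $k=m+3$, passage to $C^0$-bounds through \Cref{lem:DAP12Lem4.6}, the finite-time length control from \Cref{lem:growthlength} and \eqref{eq:lenghtlowerbound}, constant-speed reparametrization plus a translation, and a restart argument contradicting maximality; geometric uniqueness from \Cref{thm:shorttime}. The minor bookkeeping differences (you omit the $P^{2m+2}_4$-contribution from your display and phrase the estimate as an induction with a Gronwall step, whereas the paper closes each $m$ directly against $\Vert\kappa\Vert_{L^2}^2\leq 2\sE_\lambda(\gamma_0)$) are harmless.

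There is, however, a genuine gap in your final step. From
\begin{align}
\int_0^\infty u(t)\intd t<\infty,\qquad u(t)=\big\Vert\nabla_s\big(\nabla_s^2\kappa+\tfrac12\vert\kappa\vert^2\kappa-\lambda\kappa\big)\big\Vert_{L^2(\intd s)}^2,
\end{align}
you conclude that ``along a further subsequence the inner integral at $t_i$ vanishes.'' This does not follow: integrability of $u$ guarantees the existence of \emph{some} times $s_j\to\infty$ with $u(s_j)\to0$, but these need not be a subsequence of the sequence $t_i$ you fixed at the start — $u$ could spike precisely at the given times $t_i$ while remaining in $L^1$. Since you fix an arbitrary $t_i\to\infty$, extract the convergent subsequence first, and only then invoke integrability, the stationarity of $\gamma_\infty$ is not justified. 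The paper closes exactly this hole in its Step 5: it computes $\frac{\intd}{\intd t}u(t)$, shows after a cancellation of all tangential-velocity terms and using the uniform bounds on $\nabla_s^m\kappa$ that $\vert\frac{\intd}{\intd t}u(t)\vert$ is bounded uniformly on $[0,\infty)$, and then combines this Lipschitz bound with $u\in L^1((0,\infty))$ to get the full limit $u(t)\to0$ as $t\to\infty$; only then does every subsequential limit satisfy \eqref{eq:statsolvec2}. Your argument needs either this uniform-continuity step (which is a nontrivial computation, not a remark) or a reordering of quantifiers in which you first choose times with $u(t_i)\to0$ — but the latter proves a weaker statement than the one in the theorem (where the sequence $t_i\to\infty$ is given) and is not what you wrote.
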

	
	\begin{proof}
		\Cref{thm:shorttime} gives a smooth solution $\gamma$ of \eqref{eq:ivp} in a small time interval. We assume by contradiction that this solution does not exist globally in time and denote by $0<T<\infty$ the maximal existence time. Let $h\in C^\infty([0,T]\times\sS;\RR)$ be the scalar tangential velocity, i.e.\ $\partial_t\gamma=\nabla_s^2\big(\nabla_s^2\kappa+\frac12\vert\kappa\vert^2\kappa-\lambda\kappa\big)+h\partial_s\gamma.$
		\smallskip
		
		\textit{Step 1: Bound on $\Vert\nabla_s^m\kappa\Vert_{L^2(\intd s)}$ for $m\in\NN_0$.}
		With \Cref{lem:integraleq} and \Cref{lem:phim} it follows that 
		\begin{align}\label{eq:global1}\begin{split}
				\frac{\intd}{\intd t}
				\int_\sS\vert\nabla_s^m\kappa\vert^2\intd s+\int_\sS\vert\nabla_s^{m+3}\kappa&\vert^2\intd s
				= \int_\sS\langle\nabla_s^m\kappa,P^{m+4}_3(\kappa)+P^{m+2}_5(\kappa)+P^{m+2}_3(\kappa)\rangle\intd s\\
				&-\lambda\int_\sS\vert\nabla_s^{m+2}\kappa\vert^2\intd s
				+\int_\sS h\langle\nabla_s^m\kappa,\nabla_s^{m+1}\kappa\rangle\intd s\\
				&+\frac12\int_\sS\vert\nabla_s^m\kappa\vert^2\partial_sh\intd s
				-\frac12\int_\sS\vert\nabla_s^m\kappa\vert^2\langle\kappa,V\rangle\intd s, 
			\end{split}
		\end{align}
		where $V=\nabla_s^2\left(\nabla_s^2\kappa+\frac12\vert\kappa\vert^2\kappa-\lambda\kappa\right)$. 
		The second integral is negative and can be ignored in the following estimates. 
		The two integrals containing the tangential velocity vanish after integration by parts.
		The other terms on the right hand side of \eqref{eq:global1} have the structure
		\begin{align}
			\begin{split}
				\int_\sS P^{2m+4}_4(\kappa)\intd s+\int_\sS P^{2m+2}_6(\kappa)\intd s+\int_\sS P^{2m+2}_4(\kappa)\intd s.
			\end{split}
		\end{align}
		With \Cref{lem:interpol1} (for $a=2m+4$, $b=4$, $k=m+3$ and for $a=2m+2$, $b=6$, $k=m+3$), we know that 
		\begin{align}\label{eq:global2}
			&\int_\sS\vert P^{2m+4}_4(\kappa)\vert\intd s, \int_\sS\vert P^{2m+2}_6(\kappa)\vert\intd s
			\leq\varepsilon\int_\sS\vert\nabla_s^{m+3}\kappa\vert^2\intd s+C(m,\varepsilon)\left(\int_{\sS}\vert\kappa\vert^2\intd s\right)^{2m+7}
		\end{align}
		for any $\varepsilon>0$. Further, 
		\Cref{lem:interpol1} with $a=2m+2$, $b=4$, $k=m+3$ yields
		\begin{align}\label{eq:global3}\begin{split}
				\int_\sS\vert P^{2m+2}_4(\kappa)\vert\intd s
				\leq&\;\varepsilon\int_\sS\vert\nabla_s^{m+3}\kappa\vert^2\intd s+C(m,\varepsilon)\left(\int_\sS\vert\kappa\vert^2\intd s\right)^{\frac23m+3}\\
				&+C(m)\left(\int_\sS\vert\kappa\vert^2\intd s\right)^{2m+5}\end{split}
		\end{align}
		for any $\varepsilon>0$. Lastly, we use \Cref{lem:interpol1} with $a=2m$, $b=2$ and $k=m+3$ to see that
		\begin{align}\label{eq:global4b}
			\int_\sS\vert\nabla_s^m\kappa\vert^2\intd s
			\leq \varepsilon\int_\sS\vert\nabla_s^{m+3}\kappa\vert^2\intd s+C\varepsilon^{-\frac{m}{3}}\int_\sS\vert\kappa\vert^2\intd s+C\left(\int_\sS\vert\kappa\vert^2\intd s\right)^{2m+1}.
		\end{align}
		Putting \eqref{eq:global1}, \eqref{eq:global2}, \eqref{eq:global3} and \eqref{eq:global4b}
		together, choosing $\varepsilon$ small enough and using \Cref{lem:decE}, we get
		\begin{align}\label{eq:globaldepTfirst}
			\frac{\intd}{\intd t}\int_\sS\vert\nabla_s^m\kappa\vert^2\intd s
			+\int_\sS\vert\nabla_s^m\kappa\vert^2\intd s
			\leq C(\sE_\lambda(\gamma_0),m).
		\end{align}  
		It follows that $\Vert \nabla_s^m\kappa\Vert_{L^2(\intd s)}(t)\leq\Vert \nabla_s^m\kappa\Vert_{L^2(\intd s)}(0) +C(\sE_\lambda(\gamma_0),m)$. 
		\smallskip
		
		\textit{Step 2: Bound on $\Vert\partial_s^m\kappa\Vert_{C^0}$ 
			for $m\in\NN_0$.} 
		By Step 1, \Cref{lem:decE} and \Cref{lem:DAP12Lem4.6} we have
		\begin{align}
			\Vert\partial_s^m\kappa\Vert_{L^2(\intd s)}\leq C(\gamma_0,m)
		\end{align}
		for $m\in\NN_0$. As in \cite[Theorem 2.2]{BGH1998} one shows that 
		\begin{align}
			\Vert\partial_s^{m}\kappa\Vert_{C^0}
			&\leq\Vert\partial_s^{m+1}\kappa\Vert_{L^1(\intd s)}+\sL(\gamma)^{-1}\Vert\partial_s^m\kappa\Vert_{L^1(\intd s)}\\
			&\leq\sL(\gamma)^\frac12\Vert\partial_s^{m+1}\kappa\Vert_{L^2(\intd s)}+\sL(\gamma)^{-\frac12}\Vert\partial_s^m\kappa\Vert_{L^2(\intd s)}.
		\end{align}
		The length is bounded in finite time by \Cref{lem:growthlength} and \eqref{eq:lenghtlowerbound}. Thus, we conclude that 
		\begin{align}\label{eq:global4}
			\Vert \partial_s^{m}\kappa\Vert_{C^0}\leq C(\gamma_0,m,T).
		\end{align}
		Note that this also implies that 
		\begin{align}\label{eq:global5}
			\Vert \partial_s^m V\Vert_{C^0}\leq C(\gamma_0,m,T).
		\end{align}
		
		\textit{Step 3: Global existence.} 
		Due to \Cref{rem:invariancerepara}, we can consider the reparametrization
		$\tilde\gamma$ of $\gamma$ with constant speed, i.e.\ such that $\vert\partial_x\tilde\gamma\vert=\sL(\gamma)$. This ensures that $\vert\partial_x\tilde\gamma\vert$ is bounded away from zero and bounded from above in the finite time interval $[0,T]$, see \eqref{eq:lenghtlowerbound} and \Cref{lem:growthlength}. Moreover, 
		\begin{align}
			\partial_x^{m+2}\tilde\gamma=\sL(\gamma)^{m+2}\partial_s^m\tilde\kappa
		\end{align}
		for $m\in\NN_0$, which implies by Step 2 that 
		$$\Vert\partial_x^{m+2}\tilde\gamma\Vert_{C^0}\leq C(\gamma_0,m,T)\quad\text{ for }m\in\NN_0.$$ 
		With the control of the normal velocity, we additionally obtain $\Vert\tilde\gamma\Vert_{C^0}\leq C(\gamma_0,T).$
		These bounds allow for a smooth extension of $\tilde\gamma$ to $[0,T]\times\sS$. The short time existence result gives a smooth extension for $t>T$ which contradicts the maximality of $T$. Thus, the maximal existence time must be infinite.\smallskip
		
		\textit{Step 4: Subconvergence for $\lambda>0$.} 
		In the case of $\lambda>0$, the length is bounded independently of $T$ (see \eqref{eq:lengthupperbound}).  
		Thus, the bounds in \eqref{eq:global4} and \eqref{eq:global5} do not depend on $T$. 
		Again, we consider the reparametrization $\tilde\gamma$ with constant speed such that
		$\vert\partial_x\tilde\gamma\vert$ is controlled up to $T=\infty$ (in this case independently of $T$). 
		Moreover, adding a suitable translation like 
		\begin{align}\label{eq:tildegamma-p}
			\tilde\gamma(t,\cdot)-p(t):=\tilde\gamma(t,\cdot)-\frac{1}{\sL(\tilde\gamma)}\int_\sS\tilde\gamma\intd s_{\tilde\gamma},
		\end{align}
		controls $\Vert\tilde\gamma-p\Vert_{C^0}$ up to $T=\infty$.
		In total, $\Vert\partial_x^m(\tilde\gamma-p)\Vert_{C^0}\leq C(\gamma_0,m)$ for all $m\in\NN_0$ and $t\in[0,\infty)$.
		So, given any sequence $(t_i)_{i\in\NN}$, $t_i\to\infty$, there exists a subsequence $(t_{i_k})_{k\in\NN}$ such that $\tilde\gamma(t_{i_k},\cdot)-p(t_{i_k})$ converges smoothly to a limit curve $\gamma_\infty$.\smallskip
		
		\textit{Step 5: Limits of convergent subsequences.} 
		We define $\tilde V:=\nabla_s^2\kappa+\frac12\vert\kappa\vert^2\kappa-\lambda\kappa$ and $u(t):=\Vert \nabla_s\tilde V\Vert^2_{L^2(\intd s)}$. With \eqref{eq:dtds}, we have
		\begin{align}
			\label{eq:step5neu1}
			\frac{\intd }{\intd t}u(t)
			=2\int_\sS\langle \nabla_t\nabla_s\tilde V, \nabla_s\tilde V\rangle\intd s
			+\int_\sS\vert\nabla_s\tilde V\vert^2\big(\partial_s\tilde h-\langle\kappa,V\rangle\big)\intd s,
		\end{align}
		where $\tilde h=\langle\partial_t\tilde\gamma,\partial_s\tilde\gamma\rangle$ is the tangential velocity of $\tilde\gamma$. With a computation using \eqref{eq:nablatkappa}, \eqref{eq:kommutatornablatnablas}, and \Cref{lem:phim}, we see that the term $\nabla_t\nabla_s\big(\vert\kappa\vert^2\kappa\big)$ appearing in $\nabla_t\nabla_s\tilde V$ can be written as
		\begin{align}
			\nabla_t\nabla_s\big(\vert\kappa\vert^2\kappa\big)=P(\kappa)+\tilde h\nabla_s^2\big(\vert\kappa\vert^2\kappa\big),
		\end{align}
		where $P(\kappa)$ represents several polynomials of the form $P^a_b(\kappa)$, $b\in\NN$ odd and $5\geq a\in\NN$. Thus, we see with \Cref{lem:phim} that 
		\begin{align}
			\nabla_t\nabla_s\tilde V=P(\kappa)+\tilde h\Big(
			\nabla_s^4\kappa+\frac12\nabla_s^2\big(\vert\kappa\vert^2\kappa\big)-\lambda\nabla_s^2\kappa\Big)=P(\kappa)+\tilde h\nabla_s^2\tilde V.
		\end{align}
		Here, $P(\kappa)$ represents now polynomials of the form $P^a_b(\kappa)$, $b\in\NN$ odd and $7\geq a\in\NN$. With this and integration by parts, we see that all terms containing the tangential velocity $\tilde h$ in \eqref{eq:step5neu1} cancel. Thus, with the previous steps, $\vert\frac{\intd }{\intd t}u(t)\vert$ is bounded by a constant depending on $\gamma_0$ for $t\in[0,\infty)$.
		Since $u\in L^1((0,\infty))$ due to \Cref{lem:decE}, this is sufficient to deduce $\lim_{t\to\infty}u(t)=0$.  
		It follows that $\gamma_\infty $ 
		solves \eqref{eq:statsolvec2} and is therefore stationary.
	\end{proof}
	
	\begin{rem}[Flow without tangential velocity]
		Like in some other articles, for example \cite{DKS2002} or \cite{DP14}, one could also impose that the tangential velocity is zero and study 
		\begin{align}\label{eq:ivp2}
			\begin{cases}
				\partial_t\gamma=V=\nabla_s^2\big(\nabla_s^2\kappa+\frac12\vert\kappa\vert^2\kappa-\lambda\kappa\big)\quad&\text{ in }(0,T)\times\sS,\\
				\gamma(0,\cdot)=\gamma_0&\text{ on }\sS.
			\end{cases}
		\end{align}
		In this case, Step 3 in the proof of global existence is different, because one does not want to work with a reparametrization. This difficulty can be overcome with the following considerations. First, one proves bounds on $\Vert\partial_x^m\kappa\Vert_{C^0}$
		for $m\in\NN_0$:
		By induction, it can be shown that for a sufficiently smooth function $h\colon\sS\to\RR$ or vector field $h\colon\sS\to\RR^2$, for example $h=\kappa$ or $h=V$, we have
		\begin{align} \label{eq:dxmh}
			\partial_x^mh=\vert\partial_x\gamma\vert^m\partial_s^mh+\sum_{j=1}^{m-1}F_{m-1}(\vert\partial_x\gamma\vert,\dots,\partial_x^{m-1}\vert\partial_x\gamma\vert)\partial_s^jh,
		\end{align}
		where $F_{m-1}$ is a polynomial of degree at most $m-1$.
		With Step 2, it therefore suffices to find a bound on $\Vert\partial_x^m\vert\partial_x\gamma\vert\Vert_{C^0}$ for $m\in\NN_0$. For this, we proceed by induction as in \cite[p.\ 649]{DP14}. For $m=0$, we note that $\partial_t\vert\partial_x\gamma\vert=-\langle\kappa,V\rangle\vert\partial_x\gamma\vert$ by \eqref{eq:dtds}. Since $\Vert\langle\kappa,V\rangle\Vert_{C^0}$ is bounded by \eqref{eq:global4} and \eqref{eq:global5} and $\vert\partial_x\gamma_0\vert$ is bounded from below and above, we conclude that 
		\begin{align}\label{eq:regular}
			C^{-1}<\vert\partial_x\gamma\vert<C \quad\text{ for some } C=C(\gamma_0,T).
		\end{align}
		For the induction step from $m-1$ to $m$ one uses \eqref{eq:dxmh} for $h=\langle\kappa,V\rangle$ to obtain bounds $\Vert\partial_x^i\langle\kappa,V\rangle\Vert_{C^0}\leq C(\gamma_0,m,T)$ for $0\leq i\leq m$. 
		Differentiating $\partial_t\vert\partial_x\gamma\vert=-\langle\kappa,V\rangle\vert\partial_x\gamma\vert$, we thus obtain
		$$\partial_t\partial_x^m\vert\partial_x\gamma\vert\leq-\langle\kappa,V\rangle\partial_x^m\vert\partial_x\gamma\vert+C(\gamma_0,m,T)$$
		which yields the bound on $\Vert\partial_x^m\vert\partial_x\gamma\vert\Vert_{C^0}$, that we wanted to find. Note that the bounds on $\Vert\partial_x^m\kappa\Vert_{C^0}$ also yield bounds on $\Vert\partial_x^mV\Vert_{C^0}$ for $m\in\NN_0$. Again, the uniform bounds on $\kappa$, $V$, $\gamma$ and all their derivatives yield a smooth extension up to the maximal existence time and beyond. 
	\end{rem}
	
	\section{Convergence}
	\label{sec:conv}
	
	\subsection{Stationary solutions as constrained critical points}
	\label{subsec:constcritp}
	
	Working with curves, that enclose a given area $A_0\in\RR$, we introduce the constraint functional
	\begin{align}
		\sG\colon W^{2,2}_{\mathrm{Imm}}(\sS;\RR^2)\to\RR,\qquad
		\sG(\gamma)=\sA(\gamma)-A_0=-\frac12\int_\sS\langle\nu_\gamma,\gamma\rangle\intd s_\gamma-A_0.
	\end{align}
	Here, $W^{2,2}_{\mathrm{Imm}}(\sS;\RR^2)$ denotes the space of all $W^{2,2}$-immersions. 
	A short computation shows that the Fréchet derivative is given by
	\begin{align}
		\sG'(\gamma)\colon W^{2,2}(\sS;\RR^2)\to\RR,\qquad
		\sG'(\gamma)(Y)=-\int_\sS\langle\nu_\gamma,Y\rangle\intd s_\gamma,
	\end{align}
	which is a surjective mapping. Hence, the energy $\sE_\lambda\colon W^{2,2}_{\mathrm{Imm}}(\sS;\RR^2)\to\RR$ has a critical point subject to the constraint $\sG=0$ at $\gamma\in W^{2,2}_{\mathrm{Imm}}(\sS;\RR^2)\cap\lbrace\sG=0\rbrace$ if and only if there exits $c_1\in\RR$ such that 
	\begin{align}
		\label{eq:weakEL}
		0&=\sE'(\gamma)(Y)+c_1\sG'(\gamma)(Y)\\
		&=\int_\sS\Big(\langle\nabla_{s_\gamma}^2Y,\kappa_\gamma\rangle
		-\frac12\vert\kappa_\gamma\vert^2\langle\partial_{s_\gamma}Y,\partial_{s_\gamma}\gamma\rangle+\lambda\langle\partial_{s_\gamma}Y,\partial_{s_\gamma}\gamma\rangle\Big)\intd s_\gamma
		-c_1\int_\sS\langle\nu_\gamma,Y\rangle\intd s_\gamma
	\end{align}
	for all $Y\in W^{2,2}(\sS;\RR^2)$. For 
	a computation of $\sE'(\gamma)$, see for example \cite[Lemma A.1]{DP14}.
	If $\gamma$ is sufficiently regular, we obtain from \eqref{eq:weakEL} the Euler Lagrange equation 
	\begin{align}
		\label{eq:strongEL}
		0=\nabla_s^2\kappa+\frac12\vert\kappa\vert^2\kappa-\lambda\kappa-c_1\nu
		=\nabla_{L^2}\sE_\lambda(\gamma)-c_1\nabla_{L^2}\sG(\gamma),
	\end{align}
	where $\kappa=\kappa_\gamma$ and $\nu=\nu_\gamma$. 
	Since a stationary solution is a smooth regular curve satisfying \eqref{eq:statsolvec2}, we see that for any stationary solution, there is $c_1\in\RR$ such that \eqref{eq:strongEL} holds. Thus, any stationary solution is an (area-)constrained critical point.
	
	\subsection{A constrained {\L}ojasiewicz--Simon gradient inequality}
	\label{subsec:Loja}
	
	The proof of the convergence result in \Cref{subsec:proofconv} will mainly relay on a constrained version of the \L ojasiewicz--Simon gradient inequality. 
	We will use this inequality for the $L^2$-gradients of the energy functional $\sE_\lambda$ and the constraint functional $\sG$. So, we work in the domain of these gradients. 
	Moreover, we first work with variations normal to a given regular curve. With this in mind, we define the following Hilbert spaces and functionals. 
	
	\begin{defn}
		\label{def:VandH}
		Let $\hat\gamma\in W^{4,2}_{\mathrm{Imm}}(\sS;\RR^2)$. The space of normal vector fields along $\hat\gamma$ is denoted by
		\begin{align}
			V^\perp:=W^{4,2,\perp}(\sS;\RR^2)=\lbrace Y\in W^{4,2}(\sS;\RR^2):\langle Y,\partial_x\hat\gamma\rangle =0\rbrace
		\end{align}
		(cf.\ \eqref{eq:Vperp}).
		Moreover, we set
		\begin{align}
			L^{2,\perp}(\sS;\RR^2)=\lbrace Y\in L^2(\sS;\RR^2): \langle Y,\partial_x\hat\gamma\rangle =0 \text{ a.e.}\rbrace.
		\end{align}
		Let $\varepsilon>0$ be so small such that $\hat\gamma+Y$ is immersed for all $Y\in U_\varepsilon:=B_\varepsilon(0)\subset V^\perp$. With this, we define
		\begin{align}
			E_\lambda&\colon U_\varepsilon\to\RR, \qquad E_\lambda(Y):=\sE_\lambda(\hat\gamma+Y),\label{eq:defE}\\
			G&\colon U_\varepsilon\to\RR, \qquad G(Y):=\sG(\hat\gamma+Y).\label{eq:defG}
		\end{align}
	\end{defn}
	
	As in \cite{DPS16} or in \cite{RS2020}, we first prove a \L ojasiewicz--Simon gradient inequality in $V^\perp$, i.e.\ for normal directions. Since we work in a constrained setting, the inequality will be based on \cite{ConstrLoja}. The following two lemmata cover the assumptions in  \cite[Cor. 5.2]{ConstrLoja}.
	
	\begin{lem}
		\label{lem:ass5.2E}
		Let $E_\lambda$ be defined as in \eqref{eq:defE}. 
		Then the following holds.
		\begin{enumerate}[1.]
			\item[1.] The energy $E_\lambda\colon U_\varepsilon\to\RR$ is analytic. 
			\item[2.] The $L^{2,\perp}$-gradient $d E_\lambda\colon U_\varepsilon\to L^{2,\perp}(\sS;\RR^2)$,
			\begin{align}
				Y\mapsto P^\perp\Big(\nabla_{s_{\hat\gamma+Y}}^2\kappa_{\hat\gamma+Y}+\frac12\vert\kappa_{\hat\gamma+Y}\vert^2\kappa_{\hat\gamma+Y}-\lambda\kappa_{\hat\gamma+Y}\Big)\vert\partial_x(\hat\gamma+Y)\vert,
			\end{align}
			with $P^\perp=\langle\cdot,\nu_{\hat\gamma}\rangle\nu_{\hat\gamma}$, is analytic.
			\item[3.] The derivative of the gradient evaluated at zero $(d E_\lambda)'(0)\colon V^\perp\to L^{2,\perp}$ is Fredholm of index zero.
		\end{enumerate}
	\end{lem}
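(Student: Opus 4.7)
The plan is to follow the scheme used in \cite{DPS16, RS2020} for similar higher-order geometric energies, and to verify the three assertions by explicit formula together with a symbol computation. For assertion 1, I would write $E_\lambda(Y) = \sE_\lambda(\hat\gamma+Y)$ out fully: with $\bar\gamma := \hat\gamma + Y$, the integrand $\tfrac12 |\kappa_{\bar\gamma}|^2 |\partial_x\bar\gamma| + \lambda |\partial_x\bar\gamma|$ is a rational function of $\partial_x\bar\gamma$ and $\partial_x^2\bar\gamma$ whose denominator is a power of $|\partial_x\bar\gamma|^2$. Since $Y \mapsto \partial_x^j(\hat\gamma+Y)$ is affine from $W^{4,2}$ to $W^{4-j,2}$ and $W^{4,2}(\sS;\RR^2)\hookrightarrow C^3$, shrinking $\varepsilon$ keeps $|\partial_x\bar\gamma|$ uniformly bounded away from zero. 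Analyticity of $E_\lambda$ then follows from the fact that algebraic operations on non-vanishing continuous functions, pointwise products in the Sobolev Banach-algebra sense, and the integral $\int\cdot\,\mathrm{d}x: L^1 \to \RR$ are all analytic between the relevant Banach spaces.

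For assertion 2 the same template applies with more derivatives. I would expand
\begin{align}
\bigl(\nabla_{s_{\bar\gamma}}^2 \kappa_{\bar\gamma} + \tfrac12 |\kappa_{\bar\gamma}|^2 \kappa_{\bar\gamma} - \lambda \kappa_{\bar\gamma}\bigr)|\partial_x\bar\gamma|
\end{align}
as a rational expression in $\partial_x^j\bar\gamma$ for $j=1,\dots,4$, using $\partial_s = |\partial_x\bar\gamma|^{-1}\partial_x$. The top derivative $\partial_x^4 Y$ appears linearly with a coefficient depending analytically on lower-order $W^{4,2}$-data that embeds into $C^0$, so the whole expression lies in $L^2(\sS;\RR^2)$. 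After applying the fixed projection $P^\perp = \langle\cdot,\nu_{\hat\gamma}\rangle \nu_{\hat\gamma}$ it lies in $L^{2,\perp}$, and the composition is analytic by the same argument as in step 1.

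For assertion 3, I would compute the principal part of $L := (dE_\lambda)'(0)\colon V^\perp \to L^{2,\perp}$. Parametrizing normal fields as $Y = \varphi \nu_{\hat\gamma}$ with $\varphi \in W^{4,2}(\sS;\RR)$, a careful linearization at $Y=0$ of the expression in assertion 2 should yield
\begin{align}
L(\varphi \nu_{\hat\gamma}) = |\partial_x\hat\gamma|^{-3}\,\partial_x^4 \varphi\;\nu_{\hat\gamma} + R(\varphi)\,\nu_{\hat\gamma},
\end{align}
where $R$ is a scalar linear differential operator in $\varphi$ of order at most three with smooth coefficients depending on $\hat\gamma$ up to order four. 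Ellipticity of the principal symbol is immediate from $|\partial_x\hat\gamma| > 0$, and on the compact one-manifold $\sS$ a fourth-order scalar linear elliptic operator with smooth coefficients is Fredholm of index zero from $W^{4,2}(\sS;\RR)$ to $L^2(\sS;\RR)$. Via the identification $V^\perp \cong W^{4,2}(\sS;\RR)\cdot \nu_{\hat\gamma}$ and the analogous identification for $L^{2,\perp}$, this transfers directly to $L$.

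The main obstacle is the bookkeeping in the third step: extracting the principal part of $L$ cleanly requires repeated use of $\partial_s = |\partial_x\hat\gamma|^{-1}\partial_x$ and $\partial_s \nu_{\hat\gamma} = -\langle \kappa_{\hat\gamma},\nu_{\hat\gamma}\rangle \partial_s\hat\gamma$ together with the Leibniz rule to expand $\nabla_{s_{\bar\gamma}}^2 \kappa_{\bar\gamma}$ at $\bar\gamma = \hat\gamma$, and one must carefully check that every term other than $|\partial_x\hat\gamma|^{-3} \partial_x^4 \varphi\,\nu_{\hat\gamma}$ involves at most three $x$-derivatives of $\varphi$ and hence constitutes a compact perturbation into $L^2$. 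By comparison, once the explicit expressions in 1 and 2 are in hand, the analyticity assertions reduce to a mechanical check that every factor is an analytic map between suitable Banach spaces.
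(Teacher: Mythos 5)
Your argument is correct and is essentially the one the paper relies on: the paper does not reprove \Cref{lem:ass5.2E} but refers to \cite[Section 3]{DPS16} (and to \cite{MantegazzaPozzetta21} for the Fredholm property), where exactly this scheme --- analyticity via composition/Banach-algebra arguments for rational expressions in $\partial_x^j(\hat\gamma+Y)$ and $\vert\partial_x(\hat\gamma+Y)\vert^{-1}$, and Fredholmness of index zero via ellipticity of the scalar fourth-order linearization with principal part $\vert\partial_x\hat\gamma\vert^{-3}\partial_x^4$ plus a compact lower-order remainder --- is carried out. The only point to keep in mind is that your identification $V^\perp\cong W^{4,2}(\sS;\RR)\,\nu_{\hat\gamma}$ (and the claimed smoothness of the coefficients of $R$) uses more regularity of $\hat\gamma$ than the $W^{4,2}$ of \Cref{def:VandH}, which is harmless here because the lemma is applied at the smooth constrained critical point $\hat\gamma=\gamma_\infty$ in \Cref{thm:Lojanormal}.
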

	
	For the definition and fundamental properties of analytic maps between Banach spaces, see for example 
	\cite{W1965}. A detailed proof of \Cref{lem:ass5.2E} in the case of open elastic curves can be found in \cite[Section 3]{DPS16}. By slightly changing the function spaces (no boundary conditions but working in $\sS$), this proof is transferred to the case of closed curves. A proof of 3.\ in \Cref{lem:ass5.2E} is also given in \cite{MantegazzaPozzetta21}. We remark that for this Fredholm property of the second variation, it is crucial that we consider only variations in normal direction. \smallskip
	
	For the constrained version of the \L ojasiewicz--Simon gradient inequality, we additionally have to analyze the functional $G$.
	
	\begin{lem}
		\label{lem:ass5.2G}
		The constraint functional $G$ defined in \eqref{eq:defG} satisfies the following.
		\begin{enumerate}[1.]
			\item The functional $G\colon U_\varepsilon\to\RR$ is analytic. 
			\item The $L^{2,\perp}$-gradient $dG\colon U_\varepsilon\to L^{2,\perp}(\sS;\RR^2)$, 
			\begin{align}
				Y\mapsto-P^\perp\nu_{\hat\gamma+Y}\vert\partial_x(\hat\gamma+Y)\vert=
				-\langle\nu_{\hat\gamma+Y},\nu_{\hat\gamma}\rangle\nu_{\hat\gamma}\,\vert\partial_x(\hat\gamma+Y)\vert
			\end{align}
			is analytic. 
			\item The derivative of the gradient evaluated at zero $(d G)'(0)\colon V^\perp\to L^{2,\perp}$ is compact.
			\item If $\hat\gamma$ in \Cref{def:VandH} is given such that $\sA(\hat\gamma)=A_0$, we have $G(0)=0$ and $dG(0)\neq0$.
		\end{enumerate}
	\end{lem}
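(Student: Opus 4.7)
My plan is to verify the four assertions by writing $G$ in a form that makes its polynomial/analytic dependence on $Y$ manifest, then computing the gradient and its derivative explicitly.

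\textbf{Step 1 (analyticity of $G$).} Using that $\nu_{\hat\gamma+Y}\intd s_{\hat\gamma+Y}=J\,\partial_x(\hat\gamma+Y)\intd x$ with $J$ the rotation by $\pi/2$, rewrite
\begin{align}
G(Y)=-\tfrac12\int_\sS\langle J\partial_x(\hat\gamma+Y),\hat\gamma+Y\rangle\intd x-A_0.
\end{align}
The right-hand side is a polynomial of degree two in $Y$ and $\partial_x Y$, hence analytic from $W^{4,2}(\sS;\RR^2)\supset U_\varepsilon$ into $\RR$. This also makes the Fréchet derivative trivial to compute: an integration by parts gives $G'(Y)(\phi)=-\int_\sS\langle\nu_{\hat\gamma+Y},\phi\rangle\intd s_{\hat\gamma+Y}$ for $\phi\in W^{4,2}(\sS;\RR^2)$, in agreement with the formula stated before \Cref{def:VandH}.

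\textbf{Step 2 (analyticity of the $L^{2,\perp}$-gradient).} By the definition of $dG$ via the $L^{2,\perp}(\intd x)$ pairing, $dG(Y)$ must satisfy $G'(Y)(\phi)=\int_\sS\langle dG(Y),\phi\rangle\intd x$ for all $\phi\in V^\perp$. Writing $\phi=\langle\phi,\nu_{\hat\gamma}\rangle\nu_{\hat\gamma}$ and using Step~1 yields
\begin{align}
dG(Y)=-\langle\nu_{\hat\gamma+Y},\nu_{\hat\gamma}\rangle\,\nu_{\hat\gamma}\,\vert\partial_x(\hat\gamma+Y)\vert.
\end{align}
Analyticity now reduces to checking that $Y\mapsto\vert\partial_x(\hat\gamma+Y)\vert$ and $Y\mapsto\nu_{\hat\gamma+Y}$ are analytic into $C^0(\sS)$ (and hence into $L^{2,\perp}$). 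The first is the composition of the analytic map $Y\mapsto\langle\partial_x(\hat\gamma+Y),\partial_x(\hat\gamma+Y)\rangle$ with the analytic square root (valid since immersions in $U_\varepsilon$ keep $\vert\partial_x(\hat\gamma+Y)\vert$ bounded away from zero); the second equals $J\partial_x(\hat\gamma+Y)/\vert\partial_x(\hat\gamma+Y)\vert$ and is analytic by the same token. Since $\nu_{\hat\gamma}$ is a fixed smooth coefficient, the product lands in $L^{2,\perp}(\sS;\RR^2)$ and is analytic.

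\textbf{Step 3 (compactness of $(dG)'(0)$).} The key observation is that $dG$ depends on $Y$ only through $Y$ itself and $\partial_x Y$ (entering via $\nu_{\hat\gamma+Y}$ and $\vert\partial_x(\hat\gamma+Y)\vert$). Consequently, the linearization $(dG)'(0)\colon V^\perp\to L^{2,\perp}$ is a first-order linear differential operator with smooth coefficients depending on $\hat\gamma$, so it factors as
\begin{align}
V^\perp=W^{4,2,\perp}\xhookrightarrow{\;\text{compact}\;} W^{1,2,\perp}\xrightarrow{\;\text{bounded}\;}L^{2,\perp},
\end{align}
where the first inclusion is compact by Rellich--Kondrachov on $\sS$. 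The composition is therefore compact.

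\textbf{Step 4 (pointwise values).} By hypothesis $\sA(\hat\gamma)=A_0$, so $G(0)=\sG(\hat\gamma)=0$. For the gradient, $dG(0)=-\nu_{\hat\gamma}\,\vert\partial_x\hat\gamma\vert$, which is nonzero in $L^{2,\perp}$ since $\hat\gamma$ is an immersion.

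The only point requiring real care is Step~3: one must ensure that the formal differentiation of the expression for $dG$ at $Y=0$ really involves no derivatives of order higher than one, so that the Rellich step applies. This follows by directly differentiating $\vert\partial_x(\hat\gamma+Y)\vert$ and $\langle\nu_{\hat\gamma+Y},\nu_{\hat\gamma}\rangle$ at $Y=0$ and collecting terms, after which the claim is immediate.
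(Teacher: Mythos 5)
Your proposal is correct, and it reaches the same conclusions as the paper by a partly different, somewhat more hands-on route. For parts 1 and 2 the paper argues compositionally: it cites the analyticity of $Y\mapsto\vert\partial_x(\hat\gamma+Y)\vert$ and $Y\mapsto\partial_{s_{\hat\gamma+Y}}(\hat\gamma+Y)$ from \cite{DPS16}, \cite{RS2020}, obtains $\nu_{\hat\gamma+Y}$ by rotation, and then builds $G$ and $dG$ from products, embeddings, integration and the projection $P^\perp$. Your Step 1 instead exploits $\nu_\gamma\intd s_\gamma=J\partial_x\gamma\intd x$ to exhibit $G$ as a quadratic polynomial in $(Y,\partial_xY)$, which gives analyticity of $G$ and the formula for $G'(Y)$ essentially for free; for $dG$ you still need the analyticity of $\vert\partial_x(\hat\gamma+Y)\vert$ and $\nu_{\hat\gamma+Y}$, which you establish directly via the analytic square root and reciprocal on quantities bounded away from zero — the same mechanism the paper imports from the cited references. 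For part 3 the paper computes the linearization explicitly, $(dG)'(0)Y=\nu_{\hat\gamma}\vert\partial_x\hat\gamma\vert\langle Y,\kappa_{\hat\gamma}\rangle$ (the variation of $\nu$ is tangential, so only the variation of the length element survives), and concludes compactness because the operator is of zeroth order in $Y$; you avoid the computation and argue softly that $dG(Y)$ depends pointwise only on $(Y,\partial_xY)$, so $(dG)'(0)$ is at most a first-order operator with bounded coefficients and hence factors through the compact embedding $W^{4,2,\perp}\hookrightarrow W^{1,2,\perp}$. That weaker statement is entirely sufficient for compactness, so your argument is sound (in fact, restricted to normal fields the first-order term $\langle\partial_xY,\partial_x\hat\gamma\rangle$ collapses to $-\langle Y,\partial_x^2\hat\gamma\rangle$, recovering the paper's zeroth-order formula); the only gain of the explicit computation is this sharper structural information, which the paper does not actually need beyond compactness. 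Part 4 is identical, and your sign $dG(0)=-\nu_{\hat\gamma}\vert\partial_x\hat\gamma\vert$ is the correct one. One cosmetic point: since $\hat\gamma$ is only assumed $W^{4,2}$ in Definition \ref{def:VandH}, the coefficients of $(dG)'(0)$ are $C^1$ rather than smooth, but boundedness is all your Rellich argument requires.
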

	
	\begin{proof}
		\textit{1. and 2.} As in \cite[Lem.\ 3.4]{DPS16} and \cite[Prop.\ 4.5]{RS2020}, the functions
		\begin{align}
			U_\varepsilon\to C(\sS;\RR), \;Y\mapsto \vert\partial_x(\hat\gamma+Y)\vert\quad\text{ and }\quad
			\ U_\varepsilon\to W^{3,2}(\sS;\RR^2), \; Y\mapsto\partial_{s_{\hat\gamma+Y}}(\hat\gamma+Y)
		\end{align}
		are analytic. Since rotation by a fixed angle is a welldefined, continuous, linear operator on $W^{3,2}(\sS;\RR^2)\subset C^2(\sS;\RR^2)$, 
		\begin{align}
			U_\varepsilon\to W^{3,2}(\sS;\RR^2), \quad Y\mapsto\nu_{\hat\gamma+Y}
		\end{align}
		is also analytic. The function $Y\mapsto\hat\gamma+Y$ is analytic as a map from $U_\varepsilon$ to $W^{3,2}(\sS;\RR^2)$. Thus, the scalar product
		\begin{align}
			U_\varepsilon\to W^{3,2}(\sS;\RR), \quad Y\mapsto\langle\hat\gamma+Y,\nu_{\hat\gamma+Y}\rangle
		\end{align} 
		is analytic.
		As $W^{3,2}(\sS;\RR^2)$ is continuously embedded into $L^2(\sS;\RR^2)$, $Y\mapsto\langle\hat\gamma+Y,\nu_{\hat\gamma+Y}\rangle$ is also analytic as a map from $U_\varepsilon$ to $L^2(\sS;\RR^2)$.
		Since scalar multiplication $L^2(\sS;\RR)\times C(\sS;\RR)\to L^2(\sS;\RR)$ is analytic, so is the map 
		\begin{align}
			U_\varepsilon\to L^2(\sS;\RR), \quad Y\mapsto\langle\hat\gamma+Y,\nu_{\hat\gamma+Y}\rangle\vert\partial_x(\hat\gamma+Y)\vert.
		\end{align} 
		Since integration over $\sS$ is a welldefined continuous and linear operator on $L^2(\sS;\RR)$, the analyticity of $G\colon U_\varepsilon\to \RR$ follows. Similar arguments and the continuity and linearity of the projection $P^\perp\colon L^2(\sS;\RR^2)\to L^{2,\perp}(\sS;\RR^2)$ yields the analyticity of the $L^{2,\perp}$-gradient $dG\colon U_\varepsilon\to L^{2,\perp}(\sS;\RR^2)$.\\
		\textit{3.} With the projection $P^\perp=\langle\cdot,\nu_{\hat\gamma}\rangle\nu_{\hat\gamma}$, we compute
		\begin{align}
			(dG')(0)Y
			&=\frac{\intd }{\intd t}\Big\vert_{t=0}\;dG(tY)
			=-P^\perp\frac{\intd }{\intd t}\Big\vert_{t=0}\;\nu_{\hat\gamma+tY}\;\vert\partial_x\hat\gamma\vert-P^\perp\nu_{\hat\gamma}\,\frac{\intd }{\intd t}\Big\vert_{t=0}\;\vert\partial_x(\hat\gamma+tY)\vert\\
			&=\nu_{\hat\gamma}\vert\partial_x\hat\gamma\vert\langle Y,\kappa_{\hat\gamma}\rangle.\label{eq:dG'0}
		\end{align}
		Here, we used that 
		\begin{align}
			\frac{\intd }{\intd t}\Big\vert_{t=0}\;\partial_{s_{\hat\gamma+tY}}(\hat\gamma+tY)=\nabla_{s_{\hat\gamma}}Y
		\end{align}
		(see for example \cite[(B.5)]{DPS16}), which implies that the first summand in \eqref{eq:dG'0} vanishes. For the second summand, we used
		\begin{align}
			\frac{\intd }{\intd t}\Big\vert_{t=0}\;\vert\partial_x(\hat\gamma+tY)\vert=-\vert\partial_x\hat\gamma\vert\langle Y,\kappa_{\hat\gamma}\rangle
		\end{align}
		(see for example \cite[(B.2)]{DPS16}). We see that $(dG')(0)\colon V^\perp\to L^{2,\perp}$ is of zeroth order in $Y$. 
		Hence it is compact. \\
		\textit{4.} The assumptions on $\hat\gamma$ imply that $G(0)=\sG(\hat\gamma)=0$. Moreover, $\vert\partial_x\hat\gamma\vert\neq0$ and $\nu_{\hat\gamma}\neq0$. Thus, $dG(0)=\nu_{\hat\gamma}\vert\partial_x\hat\gamma\vert\neq0$.
	\end{proof}
	
	With this, we prove a first version of a constrained \L ojasiewicz-Simon gradient inequality which reads as follows.
	
	\begin{thm}\label{thm:Lojanormal}
		Let $\hat\gamma\in C^\infty_{\mathrm{Imm}}(\sS;\RR^2)$ be a constrained critical point of $\sE_\lambda$. Then there exist constants $C>0$, $\vartheta\in(0,\frac12]$ and $r>0$, such that 
		\begin{align}
			\vert E_\lambda(0)-E_\lambda(Y)\vert^{1-\vartheta}
			\leq C\Vert P(Y)dE_\lambda(Y)\Vert_{L^2(\intd x)}
		\end{align}
		for all $Y\in B_r(0)\subset U_\varepsilon\subset V^\perp$ with $G(Y)=0$. Here, $P(Y)\colon L^{2,\perp}(\sS;\RR^2)\to L^{2,\perp}(\sS;\RR^2)$ is the orthogonal projection onto $\lbrace X\in L^{2,\perp}(\sS;\RR^2): \langle\nu_{\hat\gamma+Y},X\rangle_{L^2(\intd s_{\hat\gamma+Y})}=0\rbrace$.
	\end{thm}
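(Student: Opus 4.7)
The plan is to apply the abstract constrained \L ojasiewicz--Simon gradient inequality of \cite[Cor.~5.2]{ConstrLoja} directly to the pair $(E_\lambda, G)$ on the Banach space $V^\perp$, with $L^{2,\perp}(\sS;\RR^2)$ playing the role of the dual/gradient space. All of the analytic verification is already isolated in \Cref{lem:ass5.2E} and \Cref{lem:ass5.2G}: together they establish exactly the hypotheses of that corollary, namely analyticity of the functionals and their $L^{2,\perp}$-gradients, the Fredholm-index-zero property of $(dE_\lambda)'(0)$, compactness of $(dG)'(0)$, and the nondegeneracy $G(0)=0$ with $dG(0)\neq 0$. The assumption that $\hat\gamma$ is a constrained critical point of $\sE_\lambda$ supplies the remaining ingredient, namely the existence of a Lagrange multiplier $c_1\in\RR$ with $dE_\lambda(0)-c_1\,dG(0)=0$, as made explicit in \Cref{subsec:constcritp}.

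With these ingredients in place, \cite[Cor.~5.2]{ConstrLoja} yields constants $C>0$, $\vartheta\in(0,\tfrac12]$ and $r>0$ such that
\begin{align*}
\vert E_\lambda(0)-E_\lambda(Y)\vert^{1-\vartheta}\leq C\,\Vert\Pi(Y)\,dE_\lambda(Y)\Vert_{L^2(\intd x)}
\end{align*}
for all $Y\in B_r(0)\cap V^\perp$ with $G(Y)=0$, where $\Pi(Y)$ is the orthogonal projection in $L^{2,\perp}(\sS;\RR^2)$ onto the tangent space of the constraint manifold at $Y$, i.e.\ onto $\lbrace X\in L^{2,\perp}(\sS;\RR^2):\langle dG(Y),X\rangle_{L^2(\intd x)}=0\rbrace$. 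The final step is to identify $\Pi(Y)$ with the projection $P(Y)$ appearing in the statement. Using the explicit formula $dG(Y)=-\langle\nu_{\hat\gamma+Y},\nu_{\hat\gamma}\rangle\nu_{\hat\gamma}\,\vert\partial_x(\hat\gamma+Y)\vert$ from \Cref{lem:ass5.2G} and the fact that elements of $L^{2,\perp}(\sS;\RR^2)$ are pointwise scalar multiples of $\nu_{\hat\gamma}$, the pairing $\langle dG(Y),X\rangle_{L^2(\intd x)}$ rewrites as $-\int_\sS\langle\nu_{\hat\gamma+Y},X\rangle\,\intd s_{\hat\gamma+Y}$, and vanishing of this integral is precisely the condition defining the range of $P(Y)$. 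Hence $\Pi(Y)=P(Y)$ and the claimed inequality follows.

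I do not anticipate a real obstacle beyond careful bookkeeping. The nontrivial analytic content (analyticity on a Banach space, the Fredholm alternative for the second variation, and compactness of the linearized constraint gradient) is already handled by the two preceding lemmas. The one point that requires attention is aligning the measure conventions of \cite{ConstrLoja}, which pairs gradients in $L^2(\intd x)$, with the geometric $L^2(\intd s)$-pairings natural to the problem; this is exactly why the gradient formulas in \Cref{lem:ass5.2E} and \Cref{lem:ass5.2G} include the Jacobian factor $\vert\partial_x(\hat\gamma+Y)\vert$, and it is also what makes the identification $\Pi(Y)=P(Y)$ above clean.
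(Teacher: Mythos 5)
Your proposal is correct and follows essentially the same route as the paper: both verify the hypotheses of \cite[Corollary 5.2]{ConstrLoja} via \Cref{lem:ass5.2E} and \Cref{lem:ass5.2G}, invoke the criticality of $0$ for $E_\lambda$ on the constraint set $\lbrace G=0\rbrace$, and then identify the abstract projection (onto the $L^2$-closure of the tangent space of the constraint manifold, characterized in the paper via \cite[Proposition 3.3]{ConstrLoja} as the kernel of $\langle dG(Y),\cdot\rangle_{L^2(\intd x)}$) with $P(Y)$ through the same rewriting $\langle dG(Y),X\rangle_{L^2(\intd x)}=-\int_\sS\langle\nu_{\hat\gamma+Y},X\rangle\intd s_{\hat\gamma+Y}$. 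The only cosmetic difference is that you phrase the use of constrained criticality through the Lagrange multiplier identity, while the paper states it as $0$ being a critical point of $E_\lambda$ on the submanifold $\mathcal{M}$; these are equivalent here.
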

	
	\begin{proof}
		We check that the assumptions in \cite[Corollary 5.2]{ConstrLoja} are satisfied for $E_\lambda$ and $G$ defined on $U_\varepsilon\subset V^\perp$. In \Cref{lem:ass5.2E} and \Cref{lem:ass5.2G} we have seen that $E$ and $G$ are analytic. Assumption (i), i.e.\ $V^\perp(\sS;\RR^2)\hookrightarrow L^{2,\perp}(\sS;\RR^2)$ densely, is given. Assumptions (ii) and (iii) follow from \Cref{lem:ass5.2E} and assumptions (iv) to (vi) from \Cref{lem:ass5.2G}. Thus, by \cite[Corollary 5.2]{ConstrLoja}, the set of area preserving normal variations
		\begin{align}
			\mathcal{M}:=\lbrace Y\in U_\varepsilon:G(Y)=0\rbrace
		\end{align}
		is locally a submanifold of codimension $1$ near $0$. Since $\hat\gamma$ is a constrained critical point of $\sE_\lambda$, $0$ is a critical point of $E_\lambda$ on $\mathcal{M}$. Thus, \cite[Corollary 5.2]{ConstrLoja} further yields the existence of constants $C,r>0$ and $\vartheta\in(0,\tfrac12]$ such that for all $Y\in\mathcal{M}$ with $\Vert Y\Vert_{W^{4,2}(\intd x)}\leq r$, it holds that
		\begin{align}
			\vert E(Y)-E(0)\vert^{1-\vartheta}\leq C\Vert P(Y)dE(Y)\Vert_{L^2(\intd x)},
		\end{align}
		where $P(Y)\colon L^{2,\perp}(\sS;\RR^2)\to L^{2,\perp}(\sS;\RR^2)$ is the orthogonal projection onto the closure of the tangent space $\overline{T_Y\mathcal{M}}^{\Vert\cdot\Vert_{L^2(\intd x)}}$. With \cite[Proposition 3.3]{ConstrLoja} it follows that 
		\begin{align}
			\overline{T_Y\mathcal{M}}^{\Vert\cdot\Vert_{L^2(\intd x)}}
			&=\lbrace X\in L^{2,\perp}(\sS;\RR^2): \langle dG(Y),X\rangle_{L^2(dx)}=0\rbrace\\
			&=\lbrace X\in L^{2,\perp}(\sS;\RR^2): \langle\nu_{\hat\gamma+Y},X\rangle_{L^2(\intd s_{\hat\gamma+Y})}=0\rbrace.\qedhere
		\end{align}
	\end{proof}

	To prove a version of the \L ojasiewicz--Simon inequality for variations that are not necessarily normal, the reparametrization argument in \Cref{sec:apprepara} is crucial. 
	
	With \Cref{lem:repara}, \Cref{thm:Lojanormal} can be extended to area preserving variations with tangential component.
	
	\begin{thm}\label{thm:Loja}
		Let the immersion $\hat\gamma\in C^\infty(\sS;\RR^2)$ be a constrained critical point of $\sE_\lambda$.
		Then there exist constants $C>0$, $\vartheta\in(0,\tfrac12]$ and $\tilde r>0$ such that 
		\begin{align}
			\vert\sE_\lambda(\gamma)-\sE_\lambda(\hat\gamma)\vert^{1-\vartheta}
			\leq C\,\Big\Vert\nabla_{L^2}\sE_\lambda(\gamma)-\frac{\nabla_{L^2}\sG(\gamma)}{\sL(\gamma)}\int_\sS\langle\nabla_{L^2}\sE_\lambda(\gamma),\nu_\gamma\rangle\intd s_\gamma\Big\Vert_{L^2(\intd s_\gamma)}
		\end{align}
		for all $\gamma\in W^{4,2}(\sS;\RR^2)$ with $\sG(\gamma)=0$ and $\Vert\gamma-\hat\gamma\Vert_{W^{4,2}}\leq \tilde r$. 
	\end{thm}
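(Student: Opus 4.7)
The plan is to reduce the statement to the normal-direction version, \Cref{thm:Lojanormal}, by exploiting the parametrization-invariance of both $\sE_\lambda$ and $\sG$. Given $\gamma\in W^{4,2}(\sS;\RR^2)$ with $\sG(\gamma)=0$ and $\Vert\gamma-\hat\gamma\Vert_{W^{4,2}}\le\tilde r$ for a sufficiently small $\tilde r>0$, I would invoke the reparametrization lemma of the appendix (\Cref{lem:repara}) to obtain a smooth diffeomorphism $\Phi\colon\sS\to\sS$ and a normal field $Y\in V^\perp$ such that $\gamma\circ\Phi=\hat\gamma+Y$, with $\Vert Y\Vert_{W^{4,2}}$ controlled by $\Vert\gamma-\hat\gamma\Vert_{W^{4,2}}$. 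Shrinking $\tilde r$ if necessary ensures $Y\in B_r(0)\subset U_\varepsilon$, where $r$ is the radius appearing in \Cref{thm:Lojanormal}. The geometric invariance of the energy and of the signed area then give $E_\lambda(Y)=\sE_\lambda(\gamma)$, $E_\lambda(0)=\sE_\lambda(\hat\gamma)$, and $G(Y)=\sG(\gamma)=0$, so \Cref{thm:Lojanormal} yields
\[
\vert\sE_\lambda(\gamma)-\sE_\lambda(\hat\gamma)\vert^{1-\vartheta}\le C\,\Vert P(Y)\,dE_\lambda(Y)\Vert_{L^2(\intd x)}.
\]

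It then remains to identify the right-hand side with a constant multiple of the quantity in the statement. By \Cref{lem:ass5.2E}, $dE_\lambda(Y)=\langle\nabla_{L^2}\sE_\lambda(\hat\gamma+Y),\nu_{\hat\gamma}\rangle\vert\partial_x(\hat\gamma+Y)\vert\,\nu_{\hat\gamma}$, i.e.\ the component of $\nabla_{L^2}\sE_\lambda(\hat\gamma+Y)$ along $\nu_{\hat\gamma}$ times the arclength Jacobian. The subspace onto which $P(Y)$ projects inside $L^{2,\perp}$ is, by \Cref{lem:ass5.2G} and the formula $dG(Y)=-\langle\nu_{\hat\gamma+Y},\nu_{\hat\gamma}\rangle\nu_{\hat\gamma}\vert\partial_x(\hat\gamma+Y)\vert$, the $L^2(\intd s_{\hat\gamma+Y})$-orthogonal complement of $\nu_{\hat\gamma+Y}$. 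Hence $P(Y)\,dE_\lambda(Y)$ equals $dE_\lambda(Y)$ minus the unique multiple of $\nu_{\hat\gamma}$ whose $L^2(\intd s_{\hat\gamma+Y})$-average against $\nu_{\hat\gamma+Y}$ matches that of $dE_\lambda(Y)$. This Lagrange-multiplier correction is precisely the one that defines the vector field $W$ in \Cref{subsec:H-1flow}, now written with respect to the curve $\hat\gamma+Y=\gamma\circ\Phi$.

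The final step is a change of variables. Using $\intd s_{\hat\gamma+Y}=\vert\partial_x(\hat\gamma+Y)\vert\,\intd x$ together with the fact that $\vert\partial_x(\hat\gamma+Y)\vert$ is bounded above and away from zero (since $\hat\gamma$ is smooth and $Y$ is small in $W^{4,2}\hookrightarrow C^3$), the $L^2(\intd x)$-norm on the right-hand side is equivalent to the $L^2(\intd s_{\hat\gamma+Y})$-norm of the corresponding geometric quantity, with constants depending only on $\hat\gamma$. Pulling back by the diffeomorphism $\Phi$, which preserves $L^2(\intd s)$-norms of geometric objects (up to a possible sign change in $\nu$), converts this into the $L^2(\intd s_\gamma)$-norm of $W(\gamma)=\nabla_{L^2}\sE_\lambda(\gamma)-C_\gamma\nu_\gamma$, which is exactly the right-hand side of the claimed inequality after using $\nabla_{L^2}\sG(\gamma)=-\nu_\gamma$. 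The main delicate point is the last identification: one must verify that the $L^2(\intd x)$-orthogonal projection $P(Y)$, whose defining codimension-one constraint is phrased with the \emph{different} inner product $L^2(\intd s_{\hat\gamma+Y})$, reduces after pull-back to the parametrization-invariant Lagrange-multiplier correction in $W(\gamma)$; this ultimately works because the subtracted scalar coefficient is a ratio of integrals with respect to $\intd s_{\hat\gamma+Y}$ and hence a genuinely geometric quantity, matching $\sL(\gamma)^{-1}\int_\sS\langle\nabla_{L^2}\sE_\lambda(\gamma),\nu_\gamma\rangle\intd s_\gamma$ under the reparametrization.
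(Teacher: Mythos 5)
Your first half is exactly the paper's argument: apply \Cref{lem:repara} to write $\gamma\circ\Phi=\hat\gamma+Y$ with $Y\in B_r(0)\subset V^\perp$, use the parametrization invariance of $\sE_\lambda$ and $\sG$ to get $G(Y)=0$ and $E_\lambda(Y)=\sE_\lambda(\gamma)$, and invoke \Cref{thm:Lojanormal}. The gap is in your treatment of the projection term, precisely at the point you flag as delicate. You claim that $P(Y)dE_\lambda(Y)$ can be \emph{identified} (after pull-back) with $\nabla_{L^2}\sE_\lambda(\gamma)-C_\gamma\nu_\gamma$, $C_\gamma=\sL(\gamma)^{-1}\int_\sS\langle\nabla_{L^2}\sE_\lambda(\gamma),\nu_\gamma\rangle\intd s_\gamma$, because ``the subtracted scalar coefficient is a ratio of integrals with respect to $\intd s_{\hat\gamma+Y}$''. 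This is false. Since $P(Y)$ is the $L^2(\intd x)$-orthogonal projection onto the kernel of $X\mapsto\langle dG(Y),X\rangle_{L^2(\intd x)}$, one has $P(Y)dE_\lambda(Y)=dE_\lambda(Y)-\mu\, dG(Y)$ with $\mu=\langle dG(Y),dE_\lambda(Y)\rangle_{L^2(\intd x)}/\Vert dG(Y)\Vert^2_{L^2(\intd x)}$; the correction is along $dG(Y)=-\langle\nu_{\hat\gamma+Y},\nu_{\hat\gamma}\rangle\nu_{\hat\gamma}\vert\partial_x(\hat\gamma+Y)\vert$ (a nonconstant function times $\nu_{\hat\gamma}$, not a constant multiple of a unit normal), and $\mu$ involves the weights $\vert\partial_x(\hat\gamma+Y)\vert$ and $\langle\nu_{\hat\gamma+Y},\nu_{\hat\gamma}\rangle$, so it does not coincide with the geometric ratio $C_\gamma$. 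Subtracting instead a constant multiple of $\nu_{\hat\gamma}$ normalized by the $L^2(\intd s_{\hat\gamma+Y})$-pairing, as your description suggests, produces an oblique projection onto the same subspace, which neither equals $P(Y)dE_\lambda(Y)$ nor controls it in the needed direction. Moreover, because $dE_\lambda(Y)$ carries the factor $P^\perp=\langle\cdot,\nu_{\hat\gamma}\rangle\nu_{\hat\gamma}$ and the Jacobian, even with the correct coefficient one only obtains a one-sided comparison with the $L^2(\intd s_\gamma)$-quantity, not the equivalence you assert.

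The repair — and the paper's actual argument — avoids computing $P(Y)dE_\lambda(Y)$ altogether: since $P(Y)(c\,dG(Y))=0$ for \emph{every} constant $c$ and $P(Y)$ is an $L^2(\intd x)$-contraction, choose the geometric constant $c_1=c_1(\gamma)$ and estimate $\Vert P(Y)dE_\lambda(Y)\Vert_{L^2(\intd x)}=\Vert P(Y)\big(dE_\lambda(Y)+c_1\,dG(Y)\big)\Vert_{L^2(\intd x)}\leq\Vert dE_\lambda(Y)+c_1\,dG(Y)\Vert_{L^2(\intd x)}$. The right-hand side equals $\Vert P^\perp\big(\nabla_{L^2}\sE_\lambda(\hat\gamma+Y)+c_1\nabla_{L^2}\sG(\hat\gamma+Y)\big)\vert\partial_x(\hat\gamma+Y)\vert\Vert_{L^2(\intd x)}$, which, using $\vert P^\perp Z\vert\leq\vert Z\vert$ and the uniform bounds on $\vert\partial_x(\hat\gamma+Y)\vert$ for $\Vert Y\Vert_{W^{4,2}}\leq r$, is bounded by a constant times $\Vert\nabla_{L^2}\sE_\lambda(\gamma\circ\Phi)+c_1\nabla_{L^2}\sG(\gamma\circ\Phi)\Vert_{L^2(\intd s_{\gamma\circ\Phi})}$; the geometric nature of the $L^2$-gradients and of the arclength norm then yields the claimed right-hand side. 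In short, only a coefficient-free, one-sided estimate is required, and your proposed exact identification of the projection with the geometric Lagrange-multiplier correction is the step that does not hold.
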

	
	\begin{proof}
		Let $C>0$, $\vartheta\in(0,\frac12]$ and $r>0$ be given as in \Cref{thm:Lojanormal}. Moreover, let $\tilde r=\tilde r(\hat\gamma,r)$ be the constant of \Cref{lem:repara}. We take some $\gamma\in W^{4,2}(\sS;\RR^2)$ with $\sG(\gamma)=0$ and $\Vert\gamma-\hat\gamma\Vert_{W^{4,2}}\leq \tilde r$. By \Cref{lem:repara}, there exists a $W^{4,2}$-diffeomorphism $\Phi\colon\sS\to\sS$ and $Y\in B_r(0)\subset V^\perp$ such that $\gamma\circ\Phi=\hat\gamma+Y$. Since $0=\sG(\gamma)
		=\sG(\gamma\circ\Phi)$, we know that $G(Y)=0$. Thus, the invariance of $\sE_\lambda$ under reparametrization and \Cref{thm:Lojanormal} yields
		\begin{align}\label{eq:prfloja1}
			\vert\sE_\lambda(\gamma)-\sE_\lambda(\hat\gamma)\vert^{1-\vartheta}
			&= \vert\sE_\lambda(\hat\gamma+Y)-\sE_\lambda(\hat\gamma)\vert^{1-\vartheta}
			= \vert E_\lambda(Y)-E_\lambda(0)\vert^{1-\vartheta}\\
			&\leq C\Vert P(Y)dE_\lambda(Y)\Vert_{L^2(\intd x)}.
		\end{align}
		By \cite[Remark 5.4]{ConstrLoja}, the projection operator $P(Y)\colon L^{2,\perp}(\sS;\RR^2)\to L^{2,\perp}(\sS;\RR^2)$ is given by
		\begin{align}
			P(Y)X=X-\frac{\langle dG(Y),X\rangle_{L^2(\intd x)}}{\Vert dG(Y)\Vert^2_{L^2(\intd x)}}d G(Y).
		\end{align}
		In particular, we have $P(Y)(c_1 dG(Y))=0$ for
		\begin{align}
			c_1=c_1(\gamma)&:=-\frac{1}{\sL(\gamma)}\int_\sS\langle\nabla_{L^2}\sE_\lambda(\gamma),\nu_\gamma\rangle\intd s_\gamma. 
		\end{align}
		Moreover, comparing $dE_\lambda$ and $dG$ to the $L^2$-gradients, we have
		\begin{align} dE_\lambda(Y)=P^\perp\left(\nabla_{L^2}\sE_\lambda(\hat\gamma+Y)\right)\vert\partial_x(\hat\gamma+Y)\vert,\quad
			dG(Y)=P^\perp\left(\nabla_{L^2}\sG(\hat\gamma+Y)\right)\vert\partial_x(\hat\gamma+Y)\vert,
		\end{align}
		where $P^\perp=\langle\cdot,\nu_{\hat\gamma}\rangle\nu_{\hat\gamma}$. With these two observations, we obtain
		\begin{align}
			\Vert P(Y)dE_\lambda(Y)\Vert_{L^2(\intd x)}
			&=\Vert P(Y)(dE_\lambda(Y)+c_1dG(Y))\Vert_{L^2(\intd x)}\\
			&\leq \Vert dE_\lambda(Y)+c_1dG(Y)\Vert_{L^2(\intd x)}\label{eq:prfloja2}\\
			&=\Vert P^\perp\left(\nabla_{L^2}\sE_\lambda(\hat\gamma+Y)+c_1\nabla_{L^2}\sG(\hat\gamma+Y)\right)\vert\partial_x(\hat\gamma+Y)\vert \Vert_{L^2(\intd x)}.
		\end{align}
		By Sobolev embedding and possibly making $r$ smaller, we can assume that $\vert\partial_x(\hat\gamma+Y)\vert$ is uniformly bounded for $\Vert Y\Vert_{W^{4,2}}\leq r$. So, putting \eqref{eq:prfloja1} and \eqref{eq:prfloja2} together yields 
		\begin{align}
			\vert\sE_\lambda(\gamma)-\sE_\lambda(\hat\gamma)\vert^{1-\vartheta}
			&\leq C 
			\Vert \nabla_{L^2}\sE_\lambda(\gamma\circ\Phi)+c_1\nabla_{L^2}\sG(\gamma\circ\Phi) \Vert_{L^2(\intd s_{\gamma\circ\Phi})}
		\end{align}
		for a modified constant $C>0$. It remains to notice that the gradients are geometric, i.e.
		\begin{align}
			\nabla_{L^2}\sE_\lambda(\gamma\circ\Phi)=\nabla_{L^2}\sE_\lambda(\gamma)\circ\Phi, \quad
			\nabla_{L^2}\sG(\gamma\circ\Phi)=\nabla_{L^2}\sG(\gamma)\circ\Phi.
		\end{align}
		With this, we obtain
		\begin{align}
			\Vert \nabla_{L^2}\sE_\lambda(\gamma\circ\Phi)+c_1\nabla_{L^2}\sG(\gamma\circ\Phi) \Vert_{L^2(\intd s_{\gamma\circ\Phi})}
			=\Vert \nabla_{L^2}\sE_\lambda(\gamma)+c_1\nabla_{L^2}\sG(\gamma) \Vert_{L^2(\intd s_{\gamma})}
		\end{align} 
		and the claim follows. 
	\end{proof}

	\subsection{Proof of full convergence}
	\label{subsec:proofconv}
	We are now prepared to enhance the subconvergence result to a full convergence result. This finally proves \Cref{thm:combined}.
	
	\begin{thm}[Convergence]
		\label{thm:convergence}
		Let $\lambda>0$ and $\gamma\colon[0,\infty)\times\sS\to\RR^2$ be the global smooth solution of \eqref{eq:ivp}. Then there exists a family of smooth diffeomorphisms $\Phi(t)\colon\sS\to\sS$, $t\in(0,\infty)$, such that $\gamma(t,\Phi(t,\cdot))$ converges smoothly for $t\to\infty$ to a stationary solution $\gamma_\infty$.
	\end{thm}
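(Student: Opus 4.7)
The plan is to combine the smooth subconvergence from \Cref{thm:globalexsubconv} with the constrained \L ojasiewicz--Simon inequality \Cref{thm:Loja} via the standard ODE argument. The essential modification, forced by the $H^{-1}$-gradient flow structure, is to pass from the $t$-dependent dissipation norm $\|\cdot\|_{H_\gamma^{-1}}$ to the time-independent norm $\|\cdot\|_\star$ of \Cref{lem:auxnorm} before concluding Cauchy-convergence in time.

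First I would pick a sequence $t_k\to\infty$ along which the reparametrized-by-constant-speed, translated curves $\tilde\gamma(t_k,\cdot)-p(t_k)$ converge smoothly to a stationary solution $\gamma_\infty$; by area preservation (\Cref{lem:presA}) and \Cref{subsec:constcritp}, $\gamma_\infty$ is an area-constrained critical point of $\sE_\lambda$. Since $\sE_\lambda$ and $\sG$ are translation invariant and \eqref{eq:fleq} is translation covariant, for $k$ large I would freeze $p_k:=p(t_k)$ and study the translated solution $\eta(t):=\tilde\gamma(t)-p_k$, which starts at time $t_k$ in an arbitrarily small $W^{4,2}$-neighborhood of $\gamma_\infty$. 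Passing to the purely-normal parametrization (cf.\ the Remark after \Cref{thm:globalexsubconv}) makes $\partial_t\eta=\partial_t^\perp\eta$, and \Cref{thm:Loja} supplies constants $C,\vartheta,\tilde r$ valid on a $W^{4,2}$-ball around $\gamma_\infty$.

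The main chain of estimates is as follows. Write $V:=\nabla_{L^2}\sE_\lambda(\eta)$ and $W:=V-C_\eta\nu_\eta\in H_\eta$; since $\nabla_s\nu_\eta=0$, we have $\nabla_sW=\nabla_sV$, and \Cref{lem:decE} reads
\begin{align*}
-\tfrac{d}{dt}\sE_\lambda(\eta) \;=\; \|\nabla_sV\|_{L^2(\mathrm{d}s_\eta)}^2 \;=\; \|W\|_{H_\eta}^2 \;=\; \|\partial_t\eta\|_{H_\eta^{-1}}^2.
\end{align*}
\Cref{thm:Loja} yields $(\sE_\lambda(\eta)-\sE_\infty)^{1-\vartheta}\leq C\|W\|_{L^2(\mathrm{d}s_\eta)}$, while the Poincar\'e step of \Cref{lem:normequiv} applied to $\langle W,\nu_\eta\rangle$ (which has zero mean because $W\in H_\eta$) gives $\|W\|_{L^2(\mathrm{d}s_\eta)}\leq C'\|W\|_{H_\eta}$. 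Combining these, and differentiating the energy gap to the power $\vartheta$, one obtains
\begin{align*}
-\tfrac{d}{dt}(\sE_\lambda(\eta)-\sE_\infty)^\vartheta \;\geq\; c\,\frac{\|W\|_{H_\eta}^2}{\|W\|_{L^2(\mathrm{d}s_\eta)}} \;\geq\; c'\,\|W\|_{H_\eta} \;=\; c'\,\|\partial_t\eta\|_{H_\eta^{-1}},
\end{align*}
with constants depending only on the geometric quantities entering \Cref{lem:normequiv} and \Cref{lem:auxnorm}, hence uniform whenever $\eta$ stays $W^{4,2}$-close to $\gamma_\infty$. \Cref{lem:auxnorm} then upgrades the right-hand side to $c''\|\partial_t\eta\|_\star$, which is measured in a norm that does not depend on $t$.

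The rest is a standard open-closed continuation. For $k$ large, the smallness of $(\sE_\lambda(\gamma(t_k))-\sE_\infty)^\vartheta$ together with the differential inequality prevents $\eta(t)$ from leaving the \L ojasiewicz ball around $\gamma_\infty$: any attempted escape would force $\int\|\partial_t\eta\|_\star\,\mathrm{d}t$ to exceed a fixed threshold, yet the inequality caps this integral by $(c'')^{-1}(\sE_\lambda(\gamma(t_k))-\sE_\infty)^\vartheta\to0$. Once $\eta(t)$ is thereby trapped for all $t\geq t_k$, the same inequality gives $\int_{t_k}^\infty\|\partial_t\eta\|_\star\,\mathrm{d}t<\infty$, so $\eta$ is Cauchy in the Banach space carrying $\|\cdot\|_\star$. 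Interpolation against the uniform-in-$t$ smooth bounds from Steps 2 and 4 of the proof of \Cref{thm:globalexsubconv} (both valid only because $\lambda>0$ bounds $\sL$) upgrades Cauchy-in-$\|\cdot\|_\star$ to smooth convergence of $\eta$, hence of $\tilde\gamma$, and the limit must agree with $\gamma_\infty$ (up to the fixed translation $p_k$) by subconvergence. The main obstacle is precisely the step upgrading the $\|\cdot\|_{H_\eta^{-1}}$-bound to a $\|\cdot\|_\star$-bound: this is the time-dependence problem flagged in the introduction, and it is handled by a priori keeping $\eta(t)$ in a $W^{4,2}$-neighborhood of $\gamma_\infty$ so that the \Cref{lem:auxnorm} constants remain uniform in time---an argument entangled with, and resolved simultaneously by, the open-closed step.
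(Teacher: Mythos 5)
Your overall architecture is the same as the paper's: subconvergence to an area-constrained critical point, the constrained \L ojasiewicz--Simon inequality (\Cref{thm:Loja}) combined with the Poincar\'e step to bound $-\frac{\intd}{\intd t}(\sE_\lambda-\sE_\infty)^\vartheta$ from below by the $H^{-1}$-norm of the velocity, the passage to the time-independent norm $\Vert\cdot\Vert_\star$ via \Cref{lem:auxnorm}, and an open--closed trapping argument; your direct duality interpolation against $\Vert\cdot\Vert_\star$ in place of \Cref{lem:interpol2} would also be acceptable. The genuine gap lies in your choice of parametrization and the bounds you invoke for it. You work with the zero-tangential-velocity flow so that $\partial_t\eta=\partial_t^\perp\eta$, and then appeal to ``uniform-in-$t$ smooth bounds from Steps 2 and 4'' of \Cref{thm:globalexsubconv}. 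But Step 4 provides bounds for the \emph{constant-speed} reparametrization $\tilde\gamma$, not for your $\eta$; for the purely normal flow the arclength element obeys $\partial_t\vert\partial_x\eta\vert=-\langle\kappa,V\rangle\vert\partial_x\eta\vert$, and the resulting bounds on $\vert\partial_x\eta\vert$ and its higher $x$-derivatives carry constants depending on the length of the time interval (cf.\ \eqref{eq:regular}), hence are not uniform on the possibly infinite maximal trapping interval. The $W^{4,2}$-closeness to $\gamma_\infty$ that the trapping ball provides only controls derivatives of the parametrization up to order about four, so you have no uniform bound in any norm strictly stronger than $W^{4,2}$ with which to interpolate the smallness of $\Vert\eta(t)-\gamma_\infty\Vert_\star$ up to $W^{4,2}$-smallness. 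Consequently the key claim of your open--closed step --- that an attempted escape from the \L ojasiewicz ball forces $\int\Vert\partial_t\eta\Vert_\star\intd t$ to exceed a fixed threshold --- is not justified (leaving a $W^{4,2}$-ball does not by itself cost a definite amount of $\star$-displacement), and the final upgrade from Cauchy-in-$\Vert\cdot\Vert_\star$ to smooth convergence is likewise unsupported.

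This is precisely the point the paper's proof is built to handle: instead of the zero-tangential-velocity flow it writes the solution near $\gamma_\infty$ as a normal graph $\bar\gamma=\gamma_\infty+\varphi\,\nu_\infty$ (via \Cref{lem:repara} and the short-time theory with reference curve $\gamma_\infty$). This has two payoffs you lose: the velocity $\partial_t\bar\gamma=\partial_t\varphi\,\nu_\infty$ can be compared to $\partial_t^\perp\bar\gamma$ using the angle condition \eqref{eq:alphaperp} (this replaces your ``purely normal'' trick), and, crucially, the scalar graph equation admits parabolic Schauder estimates, giving the uniform $C^{6,\alpha}$ bound \eqref{eq:adrian} on the maximal interval. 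That bound is what makes the interpolation from $\Vert\cdot\Vert_\star$-smallness to $W^{6,2}$-smallness legitimate, closes the trapping argument, and allows the flow to be restarted at the putative final time. To repair your proposal you would need an analogous uniform-in-time higher-order estimate for your parametrization, which the results you cite do not provide.
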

	
	For the proof of full convergence, we use an argumentation similar to \cite[Section 5]{DPS16}, but due to the $H^{-1}$-structure of the flow, applying the \L ojasiewicz inequality is more challenging and time-dependent norms have to be handled. In \cite{GG21}, the authors have to face similar difficulties. We note that in \cite{RS2020}, an elegant way to shorten the proof of convergence in \cite[Section 5]{DPS16} is shown, but it seems that this cannot be transferred to $H^{-1}$-flows.
	
	\begin{proof}[Proof of \Cref{thm:convergence}] We take a smooth solution $\gamma\colon[0,\infty)\times\sS\to\RR^2$ of \Cref{thm:globalexsubconv} and $\tilde\gamma(t,\cdot)-p(t)$ of \eqref{eq:tildegamma-p}, which is the reparametrized with constant speed and translated solution.
		\smallskip
		
		\textit{Step 1: Convergent sequence.} 
		By the subconvergence result (\Cref{thm:globalexsubconv}), there exists a sequence of times $t_i\to\infty$, such that $\tilde\gamma(t_i,\cdot)-p(t_i)$ converges smoothly to a stationary solution $\gamma_\infty$ (which will be kept fix throughout the proof). As discussed in \Cref{subsec:constcritp}, $\gamma_\infty$ is an area constrained critical point of $\sE_\lambda$. Due to \eqref{eq:dece}, we know that 
		\begin{align}\label{eq:deceinconv}
			\sE_\lambda(\gamma(t))=\sE_\lambda(\tilde\gamma(t)-p(t))\geq\sE_\lambda(\gamma_\infty) \quad\text{for all } t\in[0,\infty).
		\end{align}
		We assume that the inequality is strict. If not, there exists $\bar t\in[0,\infty)$ such that $\sE_\lambda(\gamma(\bar t))=\sE_\lambda(\gamma_\infty)$. But due to \Cref{lem:decE}, this means $\sE_\lambda(\gamma(t))=\sE_\lambda(\gamma_\infty)$ for all $t\geq\bar t$, so $\partial_t^\perp\gamma\equiv0$ on $[\bar t,\infty)$ by \eqref{eq:dece}. 
		Thus, only the tangential velocity changes on on $[\bar t,\infty)$
		and the claimed convergence follows trivially.
		\smallskip
		
		\textit{Step 2: Fixing $\delta=\delta(\gamma_\infty)$.}
		For the following, we fix $\delta>0$ such that for all $\zeta\colon\sS\to\RR^2$ with $\Vert\zeta-\gamma_\infty\Vert_{C^1}<\delta$, it holds that
		\begin{align}\label{eq:M}
			\vert\partial_x\zeta\vert\geq M:=\frac12\min\vert\partial_x\gamma_\infty\vert=\frac{\sL(\gamma_\infty)}{2}>0,
		\end{align}
		see \eqref{eq:lenghtlowerbound}, and such that 
		\begin{align}\label{eq:alphaperp}
			\vert\alpha^\perp\vert:=\Big\vert\alpha-\Big\langle\alpha,\frac{\partial_x\zeta}{\vert\partial_x\zeta\vert}\Big\rangle\frac{\partial_x\zeta}{\vert\partial_x\zeta\vert}\Big\vert\geq\frac12\vert\alpha\vert
		\end{align}
		for all vector fields $\alpha$ normal to $\gamma_\infty$. The latter inequality can be achieved by
		\begin{align}
			\Big\vert\Big\langle\alpha,\frac{\partial_x\zeta}{\vert\partial_x\zeta\vert}\Big\rangle\Big\vert=\Big\vert\Big\langle\alpha, \frac{\partial_x\zeta}{\vert\partial_x\zeta\vert}-\frac{\partial_x\gamma_\infty}{\vert\partial_x\gamma_\infty\vert}\Big\rangle\Big\vert
		\end{align}
		becoming small for small distance 
		$\Vert\zeta-\gamma_\infty\Vert_{C^1}$.
		\smallskip
		
		\textit{Step 3: New initial datum and new flow.}
		Let $0<\varepsilon<\frac12\min\lbrace\delta,\tilde r,1\rbrace$, where $\tilde r$ is given as in \Cref{thm:Loja} for $\hat\gamma=\gamma_\infty$. Then \Cref{lem:repara} yields the existence of $\tilde\varepsilon=\tilde\varepsilon(\varepsilon,\gamma_\infty)>0$ such that for all $X\in B_{\tilde\varepsilon}(0)\subset W^{7,2}(\sS;\RR^2)$, 
		\begin{align}
			(\gamma_\infty+X)\circ\Phi=\gamma_\infty+Y
		\end{align}
		for a $W^{7,2}$-diffeomorphism $\Phi$ and $Y\in B_\varepsilon(0)\subset W^{7,2}(\sS;\RR^2)$ normal to $\gamma_\infty$. 
		Due to the smooth subconvergence, there is an index $i\in\NN$ such that $\Vert\tilde\gamma(t_i)-p(t_i)-\gamma_\infty\Vert_{W^{7,2}}<\tilde\varepsilon$.
		Thus, there exists a $W^{7,2}$-diffeomorphism $\Phi_0$ such that 
		\begin{align}\label{eq:phi0}
			(\tilde\gamma(t_i)-p(t_i))\circ\Phi_0=\gamma_\infty+N_0
		\end{align}
		for some $N_0\in W^{7,2}(\sS;\RR^2)$ normal to $\gamma_\infty$ with
		\begin{align}\label{eq:normN0}
			\Vert N_0\Vert_{W^{7,2}}<\varepsilon<\frac12\min\lbrace\delta,\tilde r\rbrace.
		\end{align}
		For this index $i$, we set $\bar\gamma_0:=(\tilde\gamma(t_i)-p(t_i))\circ\Phi_0=\gamma_\infty+N_0$. 
		Possibly choosing $\varepsilon>0$ smaller, we can ensure that $\Phi_0$ does not change the orientation.
		So $\bar\gamma_0$ describes a smooth closed curve with $\sA(\bar\gamma_0)=A_0$. 
		Moreover, by \Cref{rem:smoothdiffeo}, $\Phi_0$ and $N_0$ are smooth. 
		
		The idea now is to take $\bar\gamma_0$ as new initial datum and to consider 
		\begin{align}\label{eq:newsystem}
			\begin{cases}
				\partial^\perp_t\bar\gamma=\nabla_s^2\nabla_{L^2}\sE_\lambda(\bar\gamma)\qquad&\text{ in }(0,T)\times\sS,\\
				\bar\gamma(0,\cdot)=\bar\gamma_0=\gamma_\infty+N_0=\gamma_\infty+\varphi_0\nu_\infty &\text{ on }\sS.
			\end{cases}
		\end{align}
		By \Cref{thm:shorttime} and \Cref{rem:reference}, there exists a a geometrically unique smooth solution $\bar\gamma$ of \eqref{eq:newsystem} that can be written as 
		$\bar\gamma(t,x)=\gamma_\infty(x)+\varphi(t,x)\nu_\infty(x)$ for some smooth scalar function $\varphi$.
		Let $0<T'\leq\infty$ the maximal existence time of this solution.
		The geometric uniqueness implies that on $[t_i,T')$, $\bar\gamma$ is a smooth reparametrization of $\tilde\gamma-p(t_i)$.
		It follows that for proving \Cref{thm:convergence}, it suffices to prove that $T'=\infty$ and that $\bar\gamma$ converges to $\gamma_\infty$ as $t\to\infty$. 
		\smallskip
		
		\textit{Step 4:}
		Let $T\in(0,T']$ be maximal such that 
		\begin{align}\label{eq:normvarphi}
			\Vert N(t)\Vert_{W^{6,2}}:=\Vert\bar\gamma(t)-\gamma_\infty\Vert_{W^{6,2}}<\min\lbrace\delta,\tilde r\rbrace\quad\text{ for all } t\in[0,T). 
		\end{align}
		Due to \eqref{eq:normN0}, such a $T$ exists.
		Our goal is to show $T=T'=\infty$. 
		By parabolic Schauder estimates (as in \cite[Section 4]{CFS09}), it can be shown that
		\begin{align}\label{eq:adrian}
			\sup_{[0,T)}\Vert\bar\gamma(t)-\gamma_\infty\Vert_{C^{6,\alpha}}\leq C(\delta,\tilde r,\gamma_0)
		\end{align}
		for some $\alpha\in(0,\frac12)$.
		\smallskip
		
		\textit{Step 5: Application of the \L ojasiewicz-Simon inequality.}
		We define 
		$$\mathcal{H}(t):=(\sE_\lambda(\bar\gamma(t))-\sE_\lambda(\gamma_\infty))^\vartheta \quad\text{ for } t\in(0,T).$$ 
		Here $\vartheta$ is given as in \Cref{thm:Loja} for $\gamma_\infty$. Note that since $\bar\gamma$ is a reparametrization and translation of $\gamma$ and we made the assumption that the inequality in \eqref{eq:deceinconv} is strict, $\mathcal{H}(t)\neq0$ for all $t$. With \eqref{eq:newsystem} and integration by parts, we compute
		\begin{align}
			-\frac{\intd }{\intd t}\mathcal{H}(t)
			&=-\vartheta\big(\sE_\lambda(\bar\gamma(t))-\sE_\lambda(\gamma_\infty)\big)^{\vartheta-1}\,\frac{\intd }{\intd t}\sE_\lambda(\bar\gamma(t))\\
			&=-\vartheta\big(\sE_\lambda(\bar\gamma(t))-\sE_\lambda(\gamma_\infty)\big)^{\vartheta-1}\,\langle\nabla_{L^2}\sE_\lambda(\bar\gamma(t)),\partial_t\bar\gamma(t)\rangle_{L^2(\intd s_{\bar\gamma})}\\
			&=-\vartheta\big(\sE_\lambda(\bar\gamma(t))-\sE_\lambda(\gamma_\infty)\big)^{\vartheta-1}\,\langle\nabla_{L^2}\sE_\lambda(\bar\gamma(t)),\partial_t^\perp\bar\gamma(t)\rangle_{L^2(\intd s_{\bar\gamma})}\\
			&=\vartheta\big(\sE_\lambda(\bar\gamma(t))-\sE_\lambda(\gamma_\infty)\big)^{\vartheta-1}\,\Vert\nabla_s\nabla_{L^2}\sE_\lambda(\bar\gamma(t))\Vert^2_{L^2(\intd s_{\bar\gamma})}.
		\end{align}
		Since $\bar\gamma$ solves \eqref{eq:newsystem} with initial datum $\bar\gamma_0$ with $\sG(\bar\gamma_0)=0$, we know that $\sG(\bar\gamma)=0$. Thus, on $(0,T)$, we can apply \Cref{thm:Loja} and obtain
		\begin{align}
			-\frac{\intd }{\intd t}\mathcal{H}(t)
			\geq C\vartheta\,\frac{\Vert\nabla_s\nabla_{L^2}\sE_\lambda(\bar\gamma(t))\Vert^2_{L^2(\intd s_{\bar\gamma})}}{\Vert\nabla_{L^2}\sE_\lambda(\bar\gamma(t))-c_1(\bar\gamma(t))\bar\nu\Vert_{L^2(\intd s_{\bar\gamma})}},\label{eq:fastfeierabend1}
		\end{align}
		where
		\begin{align}
			c_1(\bar\gamma(t))=\frac{1}{\sL(\bar\gamma(t))}\int_\sS\langle\nabla_{L^2}\sE_\lambda(\bar\gamma(t)),\bar\nu\rangle\intd s_{\bar\gamma}.
		\end{align}
		Here, remember that $\nabla_{L^2}\sG(\bar\gamma)=-\bar\nu$ and note that $\Vert\nabla_{L^2}\sE_\lambda(\bar\gamma(t))-c_1(\bar\gamma(t))\bar\nu\Vert_{L^2(\intd s_{\bar\gamma})}>0$ by \Cref{thm:Loja} and $\sE_\lambda(\bar\gamma(t))>\sE_\lambda(\gamma_\infty)$.
		Writing the denominator of \eqref{eq:fastfeierabend1} in terms of the scalar curvature $\bar k=\langle\bar\kappa,\bar\nu\rangle$ and using the Poincaré--Wirtinger inequality, we obtain
		\begin{align}
			&\Vert\nabla_{L^2}\sE_\lambda(\bar\gamma(t))-c_1(\bar\gamma(t))\bar\nu\Vert_{L^2(\intd s_{\bar\gamma})}\\
			&\quad=\Big\Vert\partial_s^2\bar k+\frac12\bar k^3-\lambda\bar k-\frac{1}{\sL(\bar\gamma)}\int_\sS\Big(\partial_s^2\bar k+\frac12\bar k^3-\lambda\bar k\Big)\intd s_{\bar\gamma}\Big\Vert_{L^2(\intd s_{\bar\gamma})}\\
			&\quad\leq C(\sL({\bar\gamma}))\Big\Vert\partial_s\Big(\partial_s^2\bar k+\frac12\bar k^3-\lambda\bar k\Big)\Big\Vert_{L^2(\intd s_{\bar\gamma})}
			=C\Vert\nabla_s\nabla_{L^2}\sE_\lambda(\bar\gamma(t))\Vert_{L^2(\intd s_{\bar\gamma})}.\label{eq:lastcor}
		\end{align}
		Here, we used that $\langle\nabla_s^n\kappa,\nu\rangle=\partial_s^nk$ for all $n\in\NN_0$ and $\langle\nabla_s^2(\vert\kappa\vert^2\kappa),\nu\rangle=\partial_s^2k^3$.
		With \eqref{eq:lastcor}, we get
		\begin{align}\label{eq:w1}
			-\frac{\intd }{\intd t}\mathcal{H}(t)\geq C\vartheta\Vert\nabla_s\nabla_{L^2}\sE_\lambda(\bar\gamma(t))\Vert_{L^2(\intd s_{\bar\gamma})}
			=C\vartheta\Vert\partial_t^\perp\bar\gamma\Vert_{H_{\bar\gamma}^{-1}}
		\end{align}
		for a constant $C$ depending on bounds on $\sL(\gamma)$ and the constant of \Cref{thm:Loja}. The definition of the latter norm is given in \Cref{subsec:Hgamma}, see \eqref{eq:normHgamma-1}.
		\smallskip
		
		\textit{Step 6: Time-independent norm.}
		With the same partial integration as in \Cref{lem:presA}, we see that $\nabla_s^2\nabla_{L^2}\sE_\lambda(\bar\gamma)\in H_{\bar\gamma}$ and hence $\partial_t^\perp\bar\gamma\in H_{\bar\gamma}$. So \Cref{lem:auxnorm} implies that 
		\begin{align}\label{eq:w2}
			\Vert\partial_t^\perp\bar\gamma\Vert_{H_{\bar\gamma}^{-1}}\geq C\Vert\partial_t^\perp\bar\gamma\Vert_\star
		\end{align}
		for a constant $C$ depending on $\Vert\bar\kappa\Vert_{C^0}$, $\sL(\bar\gamma)$ as well as $\Vert\vert\partial_x\bar\gamma\vert^{-1}\Vert_{C^0}$ and $\Vert\partial_x\bar\gamma\Vert_{C^0}$. We already know that $\bar\gamma$ is a reparametrization and translation of $\gamma$ and from the proof of \Cref{thm:globalexsubconv}, we know that $\Vert\kappa\Vert_{C^0}$ and $\sL(\gamma)$ are uniformly bounded. Moreover, $\sup\vert\partial_x\bar\gamma\vert^{-1}$ is bounded by \eqref{eq:M} and \eqref{eq:normvarphi} and $\sup\vert\partial_x\bar\gamma\vert\leq\sup\vert\partial_x\bar\gamma-\partial_x\gamma_\infty\vert+\vert\partial_x\gamma_\infty\vert$ is bounded by \eqref{eq:normvarphi}. Thus, the constant $C$ does not depend on the choice of $\varepsilon$. 
		Moreover, since $\partial_t\bar\gamma$ is orthogonal to $\gamma_\infty$, we have
		\begin{align}
			&\Vert\partial_t\bar\gamma\Vert_\star
			=\sup_{Y\in W^{1,2}\setminus\lbrace0\rbrace}\;
			\frac{1}{\Vert Y\Vert_{W^{1,2}}}\int_\sS\langle\partial_t\bar\gamma,Y\rangle\intd x
			=\sup_{Y\in W^{1,2}\setminus\lbrace0\rbrace}\;
			\frac{1}{\Vert Y\Vert_{W^{1,2}}}\int_\sS\langle\partial_t\bar\gamma,Y^{\perp_\infty}\rangle\intd x\\
			&\leq\sup_{Y\in W^{1,2}\setminus\lbrace0\rbrace}\;
			\frac{1}{\Vert Y\Vert_{W^{1,2}}}\int_\sS\langle\partial_t^\perp\bar\gamma,Y^{\perp_\infty}\rangle\intd x
			+\sup_{Y\in W^{1,2}\setminus\lbrace0\rbrace}\;
			\frac{1}{\Vert Y\Vert_{W^{1,2}}}\int_\sS\langle\partial_t\bar\gamma,\partial_s\bar\gamma\rangle\langle Y^{\perp_\infty},\partial_s\bar\gamma\rangle\intd x\\
			&\leq \sup_{Y\in W^{1,2,\perp_\infty}\setminus\lbrace0\rbrace}\;
			\frac{C(\gamma_\infty)}{\Vert Y\Vert_{W^{1,2}}}\int_\sS\langle\partial_t^\perp\bar\gamma,Y\rangle\intd x
			+\sup_{Y\in W^{1,2}\setminus\lbrace0\rbrace}\;
			\frac{1}{\Vert Y\Vert_{W^{1,2}}}\int_\sS
			\langle\partial_t\bar\gamma,Y\rangle(\langle \nu_\infty,\partial_s\bar\gamma\rangle)^2\intd x
		\end{align}
		where $Y^{\perp_\infty}=\langle Y,\nu_\infty\rangle\nu_\infty$ and $W^{1,2,{\perp_\infty}}=W^{1,2}\cap\lbrace Y=Y^{\perp_\infty}\rbrace$. 
		For the first term in the last inequality, we used a computation as in \eqref{eq:komanna} and for the second term we used $\partial_t\bar\gamma=\langle\partial_t\bar\gamma,\nu_\infty\rangle\nu_\infty$.
		Since $1=\vert\nu_\infty\vert^2=\vert\langle\nu_\infty,\bar\nu\rangle\vert^2+\vert\langle\nu_\infty,\partial_s\bar\gamma\rangle\vert^2$ and $\vert\langle\nu_\infty,\bar\nu\rangle\vert\geq\frac12$ on $(0,T)$ by \eqref{eq:alphaperp}, we conclude that
		\begin{align}\label{eq:w3}
			\Vert\partial_t\bar\gamma\Vert_\star\leq C(\gamma_\infty)\Vert\partial_t^\perp\bar\gamma\Vert_\star+\frac34\Vert\partial_t\bar\gamma\Vert_\star
		\end{align}
		on $(0,T)$.
		Putting \eqref{eq:w1}, \eqref{eq:w2} and \eqref{eq:w3} together yields
		\begin{align}\label{eq:w8}
			-\frac{\intd }{\intd t}\mathcal{H}(t)\geq C\Vert\partial_t\bar\gamma\Vert_\star
		\end{align}
		for a constant $C$ depending on the bounds of the solution $\gamma$ from the proof of \Cref{thm:globalexsubconv}, the constants from \Cref{thm:Loja}, $\delta$ and $T$. Integration now yields that on $(0,T)$, we have
		\begin{align}
			\Vert\bar\gamma(t)-\gamma_\infty\Vert_\star
			&\leq\Vert\bar\gamma(t)-\bar\gamma(0)\Vert_\star+\Vert\bar\gamma(0)-\gamma_\infty\Vert_\star
			\leq\int_0^t\Vert\partial_t\bar\gamma\Vert_\star\intd \tau+\Vert\bar\gamma(0)-\gamma_\infty\Vert_\star\\
			&\leq C\Big(\big(\sE_\lambda(\bar\gamma(0))-\sE_\lambda(\gamma_\infty)\big)^\vartheta-\big(\sE_\lambda(\bar\gamma(t))-\sE_\lambda(\gamma_\infty)\big)^\vartheta\Big)+\Vert\bar\gamma(0)-\gamma_\infty\Vert_\star\\
			&\leq C\Vert\bar\gamma(0)-\gamma_\infty\Vert_{C^2}^\vartheta+\Vert\bar\gamma(0)-\gamma_\infty\Vert_\star
			\leq C\Vert\bar\gamma(0)-\gamma_\infty\Vert_{C^2}^\vartheta.\label{eq:w4}
		\end{align}
		Here, we used $\Vert\bar\gamma(0)-\gamma_\infty\Vert_\star\leq\Vert\bar\gamma(0)-\gamma_\infty\Vert_{L^2(\gamma_\infty)}\leq C(\sL(\gamma_\infty))\Vert\bar\gamma(0)-\gamma_\infty\Vert_{C^0}$, \eqref{eq:normN0}, and $\vartheta<1$.
		\smallskip
		
		\textit{Step 7: Interpolation.}
		Since $N$ is normal to $\gamma_\infty$ and since $\sA(\bar\gamma(t))=\sA(\gamma_\infty)=A_0$ implies $\int_\sS\langle N(t),\nu_\infty\rangle\intd s_{\gamma_\infty}=0$, we have $\bar\gamma(t)-\gamma_\infty=N(t)\in H_{\gamma_\infty}$. Thus, \Cref{lem:interpol2} and \Cref{lem:auxnorm} yield
		\begin{align}
			\Vert\bar\gamma(t)-\gamma_\infty\Vert_{L^2(\intd s_{\gamma_\infty})}^2
			&\leq\Vert\bar\gamma(t)-\gamma_\infty\Vert_{H_{\gamma_\infty}}\Vert\bar\gamma(t)-\gamma_\infty\Vert_{H^{-1}_{\gamma_\infty}}\\
			&\leq C\Vert\bar\gamma(t)-\gamma_\infty\Vert_{H_{\gamma_\infty}}\Vert\bar\gamma(t)-\gamma_\infty\Vert_\star\\
			&\leq C\Vert\bar\gamma(t)-\gamma_\infty\Vert_{H_{\gamma_\infty}}\Vert\bar\gamma(0)-\gamma_\infty\Vert_{C^2}^\vartheta
			\leq C\delta\Vert\bar\gamma(0)-\gamma_\infty\Vert_{C^2}^\vartheta,\label{eq:w5}
		\end{align}
		where we used \eqref{eq:w4} and $\Vert\bar\gamma(t)-\gamma_\infty\Vert_{H_{\gamma_\infty}}\leq C \Vert\bar\gamma(t)-\gamma_\infty\Vert_{W^{6,2}}<\delta$, see \eqref{eq:normvarphi}. With \cite[Prop.\ 1.1.3]{L1995}, we have
		\begin{align}
			\Vert\bar\gamma(t)-\gamma_\infty\Vert_{W^{6,2}}\leq\Vert\bar\gamma(t)-\gamma_\infty\Vert_{C^6}
			\leq C \Vert\bar\gamma(t)-\gamma_\infty\Vert_{C^0}^{1-\frac{6}{6+\alpha}}\Vert\bar\gamma(t)-\gamma_\infty\Vert_{C^{6+\alpha}}^{\frac{6}{6+\alpha}}\label{eq:gk3}
		\end{align}
		for $\alpha\in(0,\frac12)$. Moreover, by Sobolev embedding and with interpolation (see for example \cite[Thm.\ 6.4.5 (3)]{BL2012}),
		\begin{align}
			\Vert\bar\gamma(t)-\gamma_\infty\Vert_{C^0}
			&\leq\Vert\bar\gamma(t)-\gamma_\infty\Vert_{C^{0,\frac12}}
			\leq C\Vert\bar\gamma(t)-\gamma_\infty\Vert_{W^{1,2}}\\
			&\leq C\Vert\bar\gamma(t)-\gamma_\infty\Vert^\frac12_{L^2}\Vert\bar\gamma(t)-\gamma_\infty\Vert^\frac12_{W^{2,2}}.\label{eq:gk1}
		\end{align}
		Putting the last two equations together and using \eqref{eq:adrian} yields
		\begin{align}
			\Vert\bar\gamma(t)-\gamma_\infty\Vert_{W^{6,2}}
			&\leq C\Vert\bar\gamma(t)-\gamma_\infty\Vert_{L^2}^{\frac12-\frac{6}{2(6+\alpha)}}\Vert\bar\gamma(t)-\gamma_\infty\Vert_{W^{2,2}}^{\frac12-\frac{6}{2(6+\alpha)}}\Vert\bar\gamma(t)-\gamma_\infty\Vert_{C^{6+\alpha}}^{\frac{6}{6+\alpha}}\\
			&\leq C(\tilde r,\delta,\gamma_0)\Vert\bar\gamma(t)-\gamma_\infty\Vert_{L^2(\intd s_{\gamma_\infty})}^\beta\label{eq:gk4}
		\end{align}
		for some $\beta\in(0,1)$. With \eqref{eq:w5} and \eqref{eq:normN0}, we obtain
		\begin{align}\label{eq:w6}
			\Vert\bar\gamma(t)-\gamma_\infty\Vert_{W^{6,2}}
			\leq C \Vert\bar\gamma(0)-\gamma_\infty\Vert_{C^2}^{\frac{\vartheta\beta}{2}}<C\varepsilon^{\frac{\vartheta\beta}{2}}. 
		\end{align}
		
		\textit{Step 8: $T=T'=\infty$.}
		Choosing $\varepsilon<\big(\frac{\min\lbrace\delta,\tilde r\rbrace}{2C}\big)^\frac{2}{\vartheta\beta}$, we obtain from \eqref{eq:w6} 
		\begin{align}
			\Vert\bar\gamma(t)-\gamma_\infty\Vert_{W^{6,2}}<\frac{\min\lbrace\delta,\tilde r\rbrace}{2}\label{eq:gk2}
		\end{align}
		on $(0,T)$.
		This gives a contradiction to the maximality of $T$ if $T<T'$, see \eqref{eq:normvarphi}. Thus, we obtain $T=T'$ and \eqref{eq:normvarphi} holds up to $T'$. Assume that $T'<\infty$. Since \eqref{eq:adrian} holds up to $t=T'$, we can restart \eqref{eq:pdesystem2} with initial datum determined by $\bar\gamma(T')$. This contradiction to the maximality of $T'$ yields $T'=\infty$ and 
		\begin{align}\label{eq:w7}
			\Vert\bar\gamma(t)-\gamma_\infty\Vert_{W^{6,2}}<\min\lbrace\delta,\tilde r\rbrace \qquad\text{ for all } t\in(0,\infty).
		\end{align}
		
		\textit{Step 9: Convergence.}
		With \eqref{eq:w7} and \eqref{eq:w8}, we obtain $\Vert\partial_t\bar\gamma\Vert_\star\in L^1(\RR_+)$. From
		\begin{align}
			\Vert\bar\gamma(t)-\bar\gamma(t')\Vert_{L^2}^2
			&\leq
			\Vert\bar\gamma(t)-\bar\gamma(t')\Vert_{H_{\gamma_\infty}}\Vert\bar\gamma(t)-\bar\gamma(t')\Vert_{H_{\gamma_\infty}^{-1}}
			\leq C\delta\int_{t'}^t\Vert\partial_t\bar\gamma\Vert_\star\intd \tau\to0\quad\text{ for }t',t\to\infty,
		\end{align}
		it follows that $\lim_{t\to\infty}\bar\gamma(t)$ exists in $L^2(\sS;\RR^2)$ and thus equals $\gamma_\infty$. By a subsequence argument, we obtain $\Vert\bar\gamma(t)-\gamma_\infty\Vert_{C^k}\to0$ as $t\to\infty$ for any $k\in\NN$, i.e.\ the claimed smooth convergence follows. 
	\end{proof}

	\subsection{Stationary solutions}
	\label{sec:statsol}
	
	The convergence result outlined in \Cref{subsec:proofconv} raises interest in gaining a deeper understanding of stationary solutions. 
	
	From \eqref{eq:statsolvec2},
	we see that $\gamma$ is a stationary solution if and only if there exists a constant $c_1\in\RR$ such that 
	\begin{align}\label{eq:statsolc1vec}
		\nabla_s^2\kappa+\tfrac12\vert\kappa\vert^2\kappa-\lambda\kappa=c_1\nu.
	\end{align}
	If $c_1=0$ and $\lambda>0$, we obtain the classical elasticae equation \eqref{eq:ELelasticae}, for which the solutions are classified. 
	So in a certain sense, the constant $c_1$ measures the deviation from the stationary solution to an elastica. Remember that the only closed elasticae are multifold coverings of the circle and of the figure eight elastica, see \cite[Lemma 5.4]{MR2021}. 
	
	For $c_1\neq0$, \eqref{eq:statsolc1vec} is the equation which determines the equilibrium shapes of cylindrical fluid membranes exposed to a pressure difference $c_1$ and experiencing a tensile stress $\lambda$, see \cite{Z97} or \cite{VDM08}. 
	In this context, \eqref{eq:statsolc1vec} is the Euler--Lagrange equation corresponding to the problem of minimizing the elastic energy
	\begin{align}\label{eq:area penalized energy}
		\frac12\int_\gamma\vert\kappa\vert^2\intd s+\lambda\sL(\gamma)+c_1\sA(\gamma)
	\end{align}
	with penalized area ($\lambda>0$ given) and penalized length ($c_1>0$ given). 
	In \cite{Z97}, a complete classification of the critical points corresponding to closed curves is presented. Similarly, \cite{VDM08} explicitly provides the solutions to \eqref{eq:statsolc1vec} in a format comparable to \cite{Z97}. The authors state a condition under which the solution describes a closed curve and investigate when the curve is simple. 
	By \cite[Lemma 5.4]{DP2017}, minimizing \eqref{eq:area penalized energy} is equivalent to minimize $\sE_\lambda$ under the constraint of fixed enclosed area. The latter problem is studied in \cite{W2000} and \cite{WT2008} for $\omega=1$. 
	In particular, we note that from the mentioned articles, it follows that for $c_1\neq0$, there exist solutions of \eqref{eq:statsolc1vec} that do not correspond to a multifold covered circle or a multifold covered figure eight. Concrete examples can be found in \cite{Z97}, \cite{VDM08} or \cite{WT2008}. We note that the references mentioned are by no means complete. For further references, see also the citations within the mentioned articles.
	
	If $A_0\neq0$, the defect $c_1$ in \eqref{eq:statsolc1vec} can be measured in terms of the unpenalized elastic energy and the length of the curve. 
	
	\begin{lem}\label{lem:c1}
		Let $\gamma$ be a stationary solution and $c_1\in\RR$ such that $\gamma$ satisfies \eqref{eq:statsolc1vec}. Then
		\begin{align}\label{eq:c1}
			2c_1A_0=\sE(\gamma)-\lambda\sL(\gamma).
		\end{align}
	\end{lem}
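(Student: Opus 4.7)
The natural approach is a Pohozaev-type scaling argument. Since equation \eqref{eq:statsolc1vec} can be rewritten as
\begin{equation*}
    \nabla_{L^2}\sE_\lambda(\gamma) = c_1\nu = -c_1\nabla_{L^2}\sG(\gamma),
\end{equation*}
pairing this identity in $L^2(\intd s_\gamma)$ against any smooth test vector field $Y\colon\sS\to\RR^2$ produces
\begin{equation*}
    \int_\sS\langle\nabla_{L^2}\sE_\lambda(\gamma),Y\rangle\intd s +c_1\int_\sS\langle\nabla_{L^2}\sG(\gamma),Y\rangle\intd s=0.
\end{equation*}
The plan is to choose the position vector field $Y:=\gamma$, and evaluate both pairings by exploiting the homogeneity of $\sE$, $\sL$ and $\sA$ under Euclidean dilations, rather than by brute-force integration by parts.

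Under $\gamma\mapsto\mu\gamma$ arclength rescales by $\mu$, the curvature vector by $\mu^{-1}$ and the enclosed area by $\mu^2$, so
\begin{equation*}
    \sE(\mu\gamma)=\mu^{-1}\sE(\gamma),\qquad \sL(\mu\gamma)=\mu\sL(\gamma),\qquad \sA(\mu\gamma)=\mu^2 A_0.
\end{equation*}
Since both $\sE_\lambda$ and $\sG$ are smooth on the space of immersions and invariant under reparametrization (so tangential contributions cancel and only the normal part of the test field matters), the chain rule applied to the dilation family $\gamma_\mu:=\mu\gamma$ yields
\begin{equation*}
    \int_\sS\langle\nabla_{L^2}\sE_\lambda(\gamma),\gamma\rangle\intd s=\frac{d}{d\mu}\Big|_{\mu=1}\sE_\lambda(\mu\gamma)=-\sE(\gamma)+\lambda\sL(\gamma),
\end{equation*}
and analogously
\begin{equation*}
    \int_\sS\langle\nabla_{L^2}\sG(\gamma),\gamma\rangle\intd s=\frac{d}{d\mu}\Big|_{\mu=1}\sA(\mu\gamma)=2A_0.
\end{equation*}
Substituting these two identities back into the pairing above gives $-\sE(\gamma)+\lambda\sL(\gamma)+2c_1A_0=0$, which is exactly \eqref{eq:c1}.

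There is no serious obstacle: the three homogeneity relations are immediate from the definitions, and the only point worth double-checking is the legitimacy of testing the normal-valued $L^2$-gradients against the non-normal field $Y=\gamma$. This is justified by the reparametrization invariance of $\sE_\lambda$ and $\sG$, which forces the tangential component of $\gamma$ (which infinitesimally generates a reparametrization together with a scaling) to contribute nothing to the first variation. Alternatively, one may avoid the issue entirely by directly computing $\frac{d}{d\mu}\big|_{\mu=1}(\sE_\lambda+c_1\sG)(\mu\gamma)$ using the explicit scaling formulas and then invoking that $\gamma$ is an unconstrained critical point of $\sE_\lambda+c_1\sG$ by the Lagrange multiplier formulation in \Cref{subsec:constcritp}.
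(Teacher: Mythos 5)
Your argument is correct, and it reaches \eqref{eq:c1} by a genuinely different route than the paper. The paper's proof also tests \eqref{eq:statsolc1vec} against the position field $\gamma$, but then evaluates each resulting integral by hand: integrating $\int_\sS\langle\nabla_s^2\kappa,\gamma\rangle\intd s$ by parts twice produces $-\sE(\gamma)-\tfrac12\int_\sS\vert\kappa\vert^2\langle\kappa,\gamma\rangle\intd s$, the cubic term cancels against the corresponding term of the equation, $\int_\sS\langle\kappa,\gamma\rangle\intd s=-\sL(\gamma)$ gives the length term, and $\int_\sS\langle\gamma,\nu\rangle\intd s=-2A_0$ gives the area term. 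You instead recognize that pairing with $\gamma$ is differentiation along the dilation family $\mu\mapsto\mu\gamma$ and read off the three integrals from the homogeneities $\sE(\mu\gamma)=\mu^{-1}\sE(\gamma)$, $\sL(\mu\gamma)=\mu\sL(\gamma)$, $\sA(\mu\gamma)=\mu^{2}A_0$; your sign conventions ($\nabla_{L^2}\sG(\gamma)=-\nu_\gamma$, hence $\int_\sS\langle\nabla_{L^2}\sG(\gamma),\gamma\rangle\intd s=2A_0$) agree with the paper's, and the resulting identity $-\sE+\lambda\sL+2c_1A_0=0$ is exactly \eqref{eq:c1}. The trade-off is that your route invokes the gradient/first-variation structure for a \emph{non-normal} test field, i.e.\ the fact that $\sE_\lambda'(\gamma)(Y)=\int_\sS\langle\nabla_{L^2}\sE_\lambda(\gamma),Y\rangle\intd s$ for arbitrary $Y$, which holds for smooth closed curves because tangential variations are infinitesimal reparametrizations; you correctly flag this as the one point needing justification, and your alternative of testing the unconstrained criticality in the sense of \eqref{eq:weakEL} (which is already formulated for all $Y\in W^{2,2}(\sS;\RR^2)$) with $Y=\gamma$ sidesteps it entirely. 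The paper's computation avoids any appeal to variational structure and works directly with the strong equation, at the cost of a term-by-term integration by parts; your scaling argument is shorter and more conceptual, and also makes transparent why the combination $\sE-\lambda\sL-2c_1\sA$ appears.
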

	
	\begin{proof}
		We assume that $\gamma\colon\sS\to\RR^2$ satisfies \eqref{eq:statsolc1vec} on $\sS$. This implies
		\begin{align}\label{eq:marrakech}
			\int_\sS\langle\nabla_s^2\kappa,\gamma\rangle\intd s
			+\frac12\int_\sS\vert\kappa\vert^2\langle\kappa,\gamma\rangle\intd s-\lambda\int_\sS\langle\kappa,\gamma\rangle\intd s
			=c_1\int_\sS\langle\gamma,\nu\rangle\intd s.
		\end{align}
		The first integral can be written as
		\begin{align}
			\int_\sS\langle\nabla_s^2\kappa,\gamma\rangle\intd s
			&=-\int_\sS\langle\partial_s\nabla_s\kappa,\partial_s\gamma\rangle\langle\gamma,\partial_s\gamma\rangle\intd s
			=\int_\sS\langle\nabla_s\kappa,\kappa\rangle\langle\gamma,\partial_s\gamma\rangle\intd s\\
			&=-\frac12\int_\sS\vert\kappa\vert^2\intd s-\frac12\int_\sS\vert\kappa\vert^2\langle\kappa,\gamma\rangle\intd s.
		\end{align}
		Putting this into \eqref{eq:marrakech} and with another integration by parts in the third integral of \eqref{eq:marrakech} yields \eqref{eq:c1}.   
	\end{proof}
	
	\begin{rem}[Equipartition of energy]
		If $A_0=0$ and $\gamma$ is a stationary solution, \Cref{lem:c1} implies that 
		\begin{align}
			\sE(\gamma)=\lambda\sL(\gamma),
		\end{align}
		where $\sE$ is the unpenalized elastic energy \eqref{eq:ee}. So, the two parts of the penalized elastic energy \eqref{eq:pee} are equally weighted. 
	\end{rem}
	
	The dynamic approach with the $H_{\gamma}^{-1}$-gradient flow provides an elegant proof of the existence of solutions to \eqref{eq:statsolc1vec} that are no elastica. With our approach, there is no need for an explicit parametrization using Jacobi elliptic functions (as in \cite{Z97}, \cite{VDM08} and \cite{WT2008}), but simply an appropriate choice of the initial datum.
	
	\begin{cor}[of \Cref{thm:combined}, Existence of non-elastica stationary solutions]
		There exists $c_1\in\RR$ and solutions to \eqref{eq:statsolc1vec} that do not describe a multifold covered circle or figure eight elastica.
	\end{cor}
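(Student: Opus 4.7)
The plan is to exhibit a convergent solution of the flow whose limit cannot be an elastica for topological reasons. By \Cref{thm:combined}, for any smooth regular initial datum $\gamma_0$ with rotation index $\omega_0$ and enclosed area $A_0$, the flow converges smoothly (up to reparametrization) to a stationary solution $\gamma_\infty$, which by the discussion in \Cref{subsec:constcritp} satisfies \eqref{eq:statsolc1vec} for some $c_1\in\RR$ and which inherits from $\gamma_0$ both the rotation index and the enclosed area. It therefore suffices to choose $\gamma_0$ so that the pair $(\omega_0,A_0)$ cannot be realized by any closed elastica; then $c_1$ for $\gamma_\infty$ must be nonzero.

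Concretely, I would pick $\omega_0=0$ and any $A_0\neq0$. A corresponding smooth regular initial datum is easy to produce: take a smooth closed curve shaped like a figure eight, but with one lobe strictly larger than the other, so that the two signed lobe areas do not cancel. The rotation index is $0$ since the two lobes are traversed with opposite orientation, while the signed area \eqref{eq:A} is nonzero by construction. Applying \Cref{thm:combined}, the flow starting from this $\gamma_0$ converges smoothly to a stationary solution $\gamma_\infty$ with $\omega_\infty=0$ and $\sA(\gamma_\infty)=A_0\neq0$.

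By the classification of closed elasticae recalled after \eqref{eq:ELelasticae} (see \cite[Lemma 5.4]{MR2021}), the only closed elasticae are multifold covered circles and multifold covered figure eight elasticae. Any multifold circle has rotation index $\neq 0$, so cannot match $\omega_\infty=0$. The figure eight elastica itself has rotation index $0$, but it is reflectively symmetric across the axis through its double point, with the two lobes traversed in opposite senses; its two signed lobe areas therefore cancel, and the same holds for any multifold cover. Consequently, no closed elastica has $(\omega,\sA)=(0,A_0)$ with $A_0\neq 0$. If $c_1$ were zero in \eqref{eq:statsolc1vec} for $\gamma_\infty$, then $\gamma_\infty$ would be a closed elastica with these forbidden invariants, which is impossible. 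Hence $c_1\neq0$, and $\gamma_\infty$ is the desired non-elastica solution of \eqref{eq:statsolc1vec}.

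The main obstacle I anticipate is rigorously justifying that any multifold figure eight elastica has zero signed area. I would handle this by invoking the explicit parametrization of the figure eight elastica in terms of Jacobi elliptic functions (as used, e.g., in \cite{DHMV2008}), from which the axial symmetry is immediate, forcing the two lobes to contribute equal and opposite amounts to the signed area integral \eqref{eq:A}; the multifold case follows by multiplying by the covering number.
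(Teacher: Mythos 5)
Your proposal is correct and follows essentially the same route as the paper: choose $\omega_0=0$ and $A_0\neq0$, use the convergence part of \Cref{thm:combined} (which requires fixing some $\lambda>0$, a hypothesis you should state explicitly), invoke preservation of rotation index and signed area, and conclude via the classification of closed elasticae, the multifold circles being excluded by rotation index and the (multifold) figure eight by its vanishing signed area. The paper takes the symmetry-based vanishing of the figure eight's signed area for granted, whereas you sketch a justification, which is a harmless addition.
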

	
	We consider a concrete example. Let $\lambda>0$, $\omega=0$ and $A_0\neq0$. Clearly, for this choice of parameters, there exists an admissible initial datum, see for example \Cref{fig:statsolomega0Aneq0}. By \Cref{thm:convergence}, the solution to \eqref{eq:fleq} with this initial datum converges to a stationary solution. Since the rotation index and the enclosed signed area are preserved along the flow, this stationary solution satisfies $\omega=0$ and $A_0\neq0$. The only closed elastica with $\omega=0$ is the figure eight elastica with $\sA=0$. Thus, the stationary solution does not describe an elastica. The same arguments apply for $\sA=0$ and arbitrary $\omega\neq0$. \Cref{fig:statsolomega1A0} gives an exemplary initial datum. 
	
	\begin{figure}[h]
		\centering
		\subfloat[][]{\label{fig:statsolomega0Aneq0}
			\includegraphics[width=0.44\textwidth]{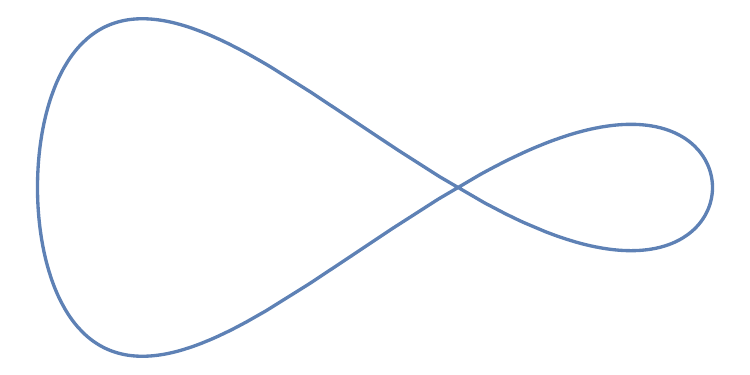}\vspace{1.5em}
		}
		\qquad\quad
		\hfill
		\subfloat[][]{\label{fig:statsolomega1A0}
			\includegraphics[width=0.44\textwidth]{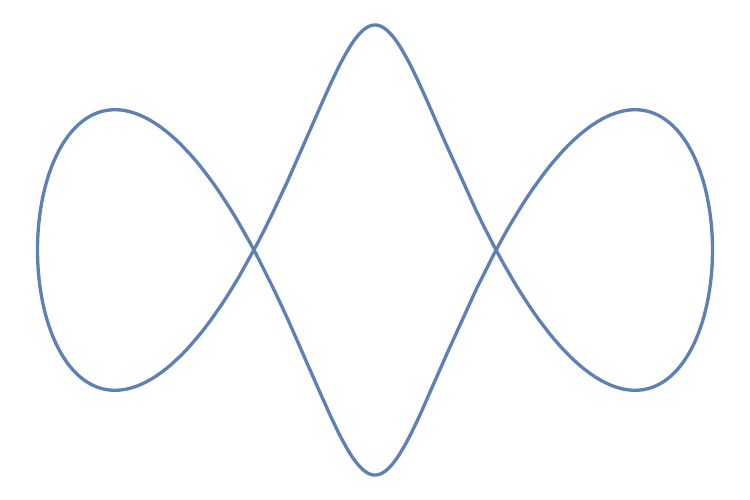}
		}
		\caption{Exemplary initial data for $\omega=0$ and $\sA\neq0$ in \subref{fig:statsolomega0Aneq0} and for $\omega=1$ and $\sA=0$ in \subref{fig:statsolomega1A0}.}
		\label{fig:nontrivstatsol}
	\end{figure}
	
	\subsection{Blow-up in infinite time}
	\label{sec:nonconvergent}
	Without penalizing the length, solutions do not converge in general. A simple example for arbitrary rotation index is given by the following situation. 
	
	\begin{lem}\label{lem:exnonconv}
		If $A_0=0$ and $\lambda=0$, the length $\sL(\gamma)$ of the global solution $\gamma$ from \Cref{thm:globalexsubconv} becomes unbounded 
		as time goes to infinity.
	\end{lem}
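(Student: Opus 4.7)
My plan is a proof by contradiction: assume $\sup_{t \geq 0} \sL(\gamma(t)) < \infty$ and derive a contradiction. Combined with Fenchel's lower bound \eqref{eq:lenghtlowerbound}, this would give two-sided uniform length bounds, mirroring the role that the upper bound \eqref{eq:lengthupperbound} played in the case $\lambda > 0$.

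With this in hand, I would rerun the proof of \Cref{thm:globalexsubconv} in the case $\lambda = 0$: Step 1 depends only on $\sE_\lambda(\gamma_0) = \sE(\gamma_0)$, while Steps 2, 4 and 5 use the length only through a uniform upper estimate, which is now supplied by the contradiction hypothesis. Consequently, along some sequence $t_i \to \infty$, the reparametrized and translated curves $\tilde\gamma(t_i) - p(t_i)$ converge smoothly to a stationary solution $\gamma_\infty$, i.e., a closed regular curve satisfying
\begin{align*}
\nabla_s^2 \kappa_{\gamma_\infty} + \tfrac12 |\kappa_{\gamma_\infty}|^2 \kappa_{\gamma_\infty} = c_1 \nu_{\gamma_\infty}
\end{align*}
for some $c_1 \in \RR$. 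By \Cref{lem:presA} and the fact that the rotation index is preserved along the flow, $\gamma_\infty$ has rotation index $\omega_0$ and enclosed area $\sA(\gamma_\infty) = A_0 = 0$.

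Applying \Cref{lem:c1} (which holds for every $\lambda \geq 0$) to $\gamma_\infty$ with $\lambda = 0$ and $A_0 = 0$ forces $\sE(\gamma_\infty) = 2 c_1 A_0 + \lambda \sL(\gamma_\infty) = 0$, hence $\kappa_{\gamma_\infty} \equiv 0$. But Fenchel's inequality applied to the closed regular curve $\gamma_\infty$ gives $4\pi^2 \leq \sL(\gamma_\infty)\,\|\kappa_{\gamma_\infty}\|_{L^2(\intd s_{\gamma_\infty})}^2 = 0$, the desired contradiction. The only thing requiring care is verifying that the subconvergence machinery of \Cref{thm:globalexsubconv} indeed needs only a uniform upper bound on $\sL$, rather than the specific estimate from $\lambda > 0$; inspection of that proof confirms this, so no new analytic ingredient is needed.
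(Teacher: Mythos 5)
Your proposal is correct and follows essentially the same route as the paper: assume a uniform length bound, observe that the subconvergence machinery of \Cref{thm:globalexsubconv} only needs the upper length bound, pass to a stationary limit, and apply \Cref{lem:c1} with $A_0=0$, $\lambda=0$ to force $\kappa_{\gamma_\infty}\equiv0$, contradicting closedness. The only cosmetic difference is that the paper also upgrades subconvergence to full convergence before applying \Cref{lem:c1}, which, as your argument shows, is not actually needed for the contradiction.
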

	
	\begin{proof}
		Assume there is $M\in\RR$ such that $\sL(\gamma(t))<M$ for all $t\geq0$. Then, proceeding as in the proof of \Cref{thm:globalexsubconv} yields subconvergence. As in \Cref{subsec:proofconv}, this improves to full convergence to a stationary solution $\gamma_\infty$. This stationary solution satisfies \eqref{eq:c1} for a constant $c_1\in\RR$, i.e.\ we have
		\begin{align}
			0=2 c_1A_0=\sE(\gamma_\infty)-\lambda\sL(\gamma_\infty)=\sE(\gamma_\infty).
		\end{align}
		This implies $\kappa_\infty\equiv0$, which contradicts the closedness of $\gamma_\infty$.
	\end{proof}
	
	We note that also for $A_0\neq0$ and $\lambda=0$, the length of the curve is not controlled by a bound on the elastic energy in general, see for example \cite{BuHe17}.
	
	\appendix
	
	\section{Details for the short time existence}
	\label{sec:appste}

	\subsection{A crucial reparametrization argument}
	\label{sec:apprepara}
	For the convenience of the reader, we state the following reparametrization argument, which can be found with a detailed proof in \cite[Lemma 4.1 and Remark 4.2]{DPS16}. In addition to the proof of short time existence (\Cref{thm:shorttime}), it is also used in the proof of the constrained \L ojasiewicz--Simon gradient inequality (\Cref{thm:Loja}).
	
	\begin{lem}\label{lem:repara}
		Let $k\geq4$ and $\hat\gamma\in W^{k+1,2}(\sS;\RR^2)$ be a regular curve. 
		We define 
		\begin{align}
			V^\perp:=W^{4,2,\perp}(\sS;\RR^2)=\lbrace Y\in W^{4,2}(\sS;\RR^2):\langle Y,\partial_x\hat\gamma\rangle =0\rbrace.\label{eq:Vperp}
		\end{align}
		Then, there exists a constant $r=r(\hat\gamma)>0$ such that 
		for all $X\in B_{r}(0)\subset W^{k,2}(\sS;\RR^2)$, there exists a $W^{k,2}$-diffeomorphism $\Phi\colon\sS\to\sS$ such that 
		\begin{align}\label{eq:repara}
			(\hat\gamma+X)\circ\Phi=\hat\gamma+Y
		\end{align}
		for some $Y\in V^\perp\cap W^{k,2}(\sS;\RR^2)$.\\
		For $r>0$ given, there exists
		$\tilde r=\tilde r(\hat\gamma,r)>0$ such that for all $X\in B_{\tilde r}(0)\subset W^{k,2}(\sS;\RR^2)$, 
		\eqref{eq:repara} holds
		for some $Y\in B_r(0)\subset V^\perp\cap W^{k,2}(\sS;\RR^2)$.
	\end{lem}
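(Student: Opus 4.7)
The plan is to find $\Phi$ via the implicit function theorem, parametrizing diffeomorphisms of $\sS$ near the identity by a scalar function $\psi$ through $\Phi(x) = x + \psi(x)$. Since $W^{k,2}(\sS) \hookrightarrow C^{k-1}(\sS)$ for $k\geq 4$, any $\psi$ in a small enough $W^{k,2}$-ball around $0$ satisfies $1 + \psi'(x) > 0$ uniformly and thus yields a $W^{k,2}$-diffeomorphism. The requirement that $(\hat\gamma + X)\circ\Phi - \hat\gamma \in V^\perp$ amounts to the scalar equation $\langle (\hat\gamma + X)(x+\psi(x)) - \hat\gamma(x), \partial_x\hat\gamma(x)\rangle = 0$ for every $x \in \sS$. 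Denoting the left-hand side by $F(X,\psi)(x)$, I would view
\begin{align}
F\colon W^{k,2}(\sS;\RR^2) \times U \to W^{k,2}(\sS;\RR)
\end{align}
as a map on a small open $W^{k,2}$-neighborhood $U$ of $0$, and apply the implicit function theorem at $(0,0)$.

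The core computation is the partial derivative in $\psi$ at $(0,0)$:
\begin{align}
\partial_\psi F(0,0)[\eta](x) = \eta(x)\,|\partial_x\hat\gamma(x)|^2,
\end{align}
obtained by differentiating $\langle\hat\gamma(x+\varepsilon\eta(x))-\hat\gamma(x),\partial_x\hat\gamma(x)\rangle$ at $\varepsilon = 0$. Since $\hat\gamma$ is regular, the multiplier $|\partial_x\hat\gamma|^2$ lies in $W^{k,2}$ and is bounded above and away from zero, so pointwise multiplication by it is a topological isomorphism of $W^{k,2}(\sS;\RR)$. Combined with $F(0,0) = 0$, the implicit function theorem produces $r > 0$ and a $C^1$ map $X \mapsto \psi(X)$ from $B_r(0) \subset W^{k,2}(\sS;\RR^2)$ into $W^{k,2}(\sS;\RR)$ with $\psi(0) = 0$ and $F(X,\psi(X)) = 0$. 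Setting $\Phi(x) := x + \psi(X)(x)$ and $Y := (\hat\gamma + X)\circ\Phi - \hat\gamma$ gives the desired reparametrization: the $W^{k,2}$-regularity of $Y$ follows from the stability of $W^{k,2}$ under composition with $W^{k,2}$-diffeomorphisms in one dimension, together with $\hat\gamma \in W^{k+1,2}\subset W^{k,2}$. The second, quantitative half is then immediate from the continuity of $X \mapsto \psi(X)$ at $X = 0$ and the fact that $\|Y\|_{W^{k,2}}$ is controlled by $\|X\|_{W^{k,2}}$ and $\|\psi(X)\|_{W^{k,2}}$, using $W^{k,2}\subset W^{4,2}$ since $k \geq 4$.

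The main obstacle is verifying the $C^1$-regularity of $F$ on these Sobolev spaces, which boils down to showing that the Nemytskii-type composition map $(X,\psi) \mapsto (\hat\gamma + X)\circ(\mathrm{id}+\psi)$ sends $W^{k,2}\times U$ continuously differentiably into $W^{k,2}$. This is precisely where the hypothesis $\hat\gamma \in W^{k+1,2}$ (one derivative more than $X$) becomes essential: differentiating the composition in $\psi$ produces a factor $\partial_x(\hat\gamma + X) \in W^{k-1,2}$, which then needs to be composed with $\mathrm{id}+\psi$ and remain controllably in $W^{k-1,2}$. The one-dimensional Sobolev algebra structure together with the embedding $W^{k,2}\hookrightarrow C^{k-1}$ makes the resulting Fréchet-differentiability estimates standard, if tedious; and, as indicated by the authors, the detailed execution is the content of \cite[Lemma 4.1 and Remark 4.2]{DPS16}.
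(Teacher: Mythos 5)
Your overall strategy---determining the reparametrization $\Phi=\mathrm{id}+\psi$ from the pointwise orthogonality equation $F(X,\psi)=0$ and solving for $\psi$ by an implicit function theorem---is the right idea and is in the spirit of the argument the paper relies on (the paper itself gives no proof here; it refers to \cite[Lemma 4.1 and Remark 4.2]{DPS16}). However, the functional-analytic setting you propose fails at its central step. You declare $F\colon W^{k,2}(\sS;\RR^2)\times U\to W^{k,2}(\sS;\RR)$ and claim that the composition map $(X,\psi)\mapsto(\hat\gamma+X)\circ(\mathrm{id}+\psi)$ is continuously differentiable into $W^{k,2}$. It is not: the formal derivative in $\psi$ in direction $\eta$ is $\langle(\partial_x\hat\gamma+\partial_xX)\circ(\mathrm{id}+\psi)\,\eta,\partial_x\hat\gamma\rangle$, and while the $\hat\gamma$-part is harmless precisely because $\hat\gamma\in W^{k+1,2}$, the term containing $\partial_xX\in W^{k-1,2}$ lies only in $W^{k-1,2}$ for generic $X\in W^{k,2}$. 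Hence $F(X,\cdot)$ is not even Gateaux differentiable into $W^{k,2}$ at points with $X\neq0$, so $F$ is not $C^1$ on any neighborhood of $(0,0)$ and the Banach-space implicit function theorem cannot be invoked as stated. The extra derivative of $\hat\gamma$ cannot repair this, because the loss comes from $X$, which is an arbitrary element of $W^{k,2}$; indeed your own accounting in the last paragraph (the factor ``remains controllably in $W^{k-1,2}$'') is inconsistent with the claimed target $W^{k,2}$. This is exactly the nontrivial content of the cited lemma, not a routine Nemytskii estimate.

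The standard repair is to accept the loss of derivatives in the implicit-function step and recover regularity afterwards: either apply the implicit function theorem with $\psi$ and the target both taken in a lower-order space (where $\partial_\psi F(0,0)[\eta]=\vert\partial_x\hat\gamma\vert^2\eta$ is still an isomorphism), or solve the scalar equation pointwise in $x$ by the finite-dimensional implicit function theorem, which applies uniformly since $\vert\partial_x\hat\gamma\vert^2\geq c>0$ and $\Vert X\Vert_{C^1}$ is small by $W^{k,2}\hookrightarrow C^{k-1}$. In either case one then bootstraps the regularity of $\psi$ from the relation $\langle(\hat\gamma+X)(x+\psi(x))-\hat\gamma(x),\partial_x\hat\gamma(x)\rangle=0$: expressing $\psi'$ via the implicit-function formula and differentiating repeatedly shows $\psi\in W^{k,2}$, and only at this stage do $\hat\gamma\in W^{k+1,2}$ and $X\in W^{k,2}$ enter in the way you intend. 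Note that simply lowering the target to $W^{k-1,2}$ while keeping $\psi\in W^{k,2}$ does not work either, since multiplication by $\vert\partial_x\hat\gamma\vert^2$ is not surjective from $W^{k,2}$ onto $W^{k-1,2}$; the two-step structure (solve at low regularity, bootstrap to high regularity) is genuinely needed. Your computation of $\partial_\psi F(0,0)$ and the deduction of the quantitative second half from continuity of $X\mapsto\psi(X)$ at $0$ are fine once this gap is closed.
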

	
	\begin{rem}\label{rem:smoothdiffeo}
		With the notations in the preceding lemma, it can be shown that smoothness of the reference curve $\hat\gamma$ and smoothness of $X\in B_r(0)$ yields smoothness of the diffeomorphism $\Phi$ and the normal variation $Y$. For details on this, see \cite[Proof of Lemma 4.1]{DPS16}.
	\end{rem}
	
	\subsection{Parabolic Hölder spaces and the linear problem}
	\label{sec:apphoelder}
	
	For the proof of short time existence, we work with parabolic Hölder spaces of order $6$, cf.\ \cite[Section II.3.1]{EZ1998} or \cite{LSU1988}. 
	
	Let $l\in\RR_+\!\setminus\ZZ$, 
	$a,b\in\RR$ with $a<b$, $T>0$ and let $Q_T:=(0,T)\times(a,b)$. The parabolic Hölder space $H^{\frac{l}{6},l}\left(\overline{Q_T}\right)$ is the space of all functions $f\colon\overline{Q_T}\to\RR$ with continuous derivatives $\partial_t^m\partial_x^nf$ for $6m+n<l$ and finite norm
	\begin{align}
		\left\Vert f\right\Vert_{H^{\frac{l}{6},l}\left(\overline{Q_T}\right)}
		:=\,&\sum_{j=0}^{\lfloor l \rfloor}\sum_{6m+n=j}\sup_{(t,x)\in{Q_T}}\left\vert\partial_t^m\partial_x^n f(t,x)\right\vert\\
		&+\sum_{6m+n=\lfloor l \rfloor}\sup_{\substack{(t,x_1),(t,x_2)\in Q_T\\ x_1\neq x_2}}\frac{\left\vert \partial_t^m\partial_x^nf(t,x_1)-\partial_t^m\partial_x^nf(t,x_2)\right\vert}{\left\vert x_1-x_2\right\vert^{l-\lfloor l \rfloor}}\\
		&+\sum_{0<l-6m-n<6}\sup_{\substack{(t_1,x),(t_2,x)\in Q_T \\ t_1\neq t_2}}\frac{\left\vert \partial_t^m\partial_x^nf(t_1,x)-\partial_t^m\partial_x^nf(t_2,x)\right\vert}{\left\vert t_1-t_2\right\vert^{\frac{l-6m-n}{6}}}.
	\end{align}
	To define the parabolic Hölder space $H^{\frac{l}{6},l}\left([0,T]\times\sS\right)$, we identify $f\colon[0,T]\times\sS\to\RR$ with its periodic counterpart $f\colon[0,T]\times\RR\to\RR$. By choosing $a,b$ such that the interval $(a,b)$ contains two periods of $f$, $H^{\frac{l}{6},l}\left([0,T]\times\sS\right)$ consists of the functions with periodic counterpart in $H^{\frac{l}{6},l}\left(\overline{Q_T}\right)$.\bigskip
	
	We first consider the linear parabolic problem
	\begin{align}
		\begin{cases}\label{eq:linprob}
			L\varphi:=\partial_t\varphi-\sum_{q=0}^6a_q\partial_x^q\varphi=f\quad&\text{ in }(0,T]\times\sS,\\
			\varphi(0,\cdot)=\varphi_0&\text{ on }\sS
		\end{cases}
	\end{align}
	for some $f\in H^{\frac{l}{6},l}\left([0,T]\times\sS\right)$, coefficients $a_q\in H^{\frac{l}{6},l}\left([0,T]\times\sS\right)$ and $\varphi_0\in C^{6+l}(\sS)$.
	
	\begin{prop}\label{prop:linprob}
		Let $L$ be parabolic, $l\in\RR_+\setminus\ZZ$ and assume that $a_q\in H^{\frac{l}{6},l}\left([0,T]\times\sS\right)$, $q=0,\ldots,6$. Then for all $f\in H^{\frac{l}{6},l}\left([0,T]\times\sS\right)$ and $\varphi_0\in C^{6+l}(\sS)$, there exist a unique solution $\varphi\in H^{\frac{6+l}{6},6+l}\left([0,T]\times\sS\right)$ to \eqref{eq:linprob} and a constant $C>0$ independent of $\varphi$, $f$ and $\varphi_0$ such that 
		\begin{align}
			\Vert \varphi\Vert_{H^{\frac{6+l}{6},6+l}\left([0,T]\times\sS\right)}
			\leq C\Big( 
			\Vert f\Vert_{H^{\frac{l}{6},l}\left([0,T]\times\sS\right)}
			+ \Vert \varphi_0\Vert_{C^{6+l}(\sS)}\Big).\label{eq:estlinprob}
		\end{align}
	\end{prop}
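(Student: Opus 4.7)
My strategy is to reduce \eqref{eq:linprob} to the classical Schauder theory for scalar higher-order parabolic Cauchy problems on $\RR$, as developed in \cite{EZ1998} and \cite{LSU1988}, and then to descend back to $\sS$ via periodicity. First I would identify any function on $\sS$ with its $2\pi$-periodic counterpart on $\RR$, so that $\varphi_0$, $f$ and the coefficients $a_q$ become $2\pi$-periodic objects of the same Hölder regularity on $[0,T]\times\RR$, and \eqref{eq:linprob} becomes a Cauchy problem on $[0,T]\times\RR$.

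Next I would verify that $L$ is $2b$-parabolic in the sense of \cite[Section II.3]{EZ1998} with $2b=6$; since $L$ is scalar, this amounts to a uniform sign and nondegeneracy condition on the leading coefficient $a_6$, which is exactly our parabolicity hypothesis. With this in place, the Schauder theory gives a unique solution $\varphi\in H^{(6+l)/6,6+l}([0,T]\times\RR)$ to the Cauchy problem together with the estimate
\begin{align}
    \Vert \varphi\Vert_{H^{(6+l)/6,6+l}([0,T]\times\RR)}
    \leq C\Big(\Vert f\Vert_{H^{l/6,l}([0,T]\times\RR)}+\Vert \varphi_0\Vert_{C^{6+l}(\RR)}\Big),
\end{align}
where $C$ depends only on $T$, $l$, and the Hölder norms (and ellipticity constant) of the $a_q$. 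No compatibility conditions arise because we are dealing with a Cauchy problem without spatial boundary.

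To descend to $\sS$, I would invoke translation invariance: if $\tau$ denotes the shift $x\mapsto x+2\pi$, then $\tau\varphi$ solves the same Cauchy problem, because the coefficients, the right-hand side and the initial datum are $2\pi$-periodic. Uniqueness then forces $\tau\varphi=\varphi$, so $\varphi$ is $2\pi$-periodic and descends to an element of $H^{(6+l)/6,6+l}([0,T]\times\sS)$ satisfying \eqref{eq:estlinprob}. Uniqueness on $\sS$ follows from uniqueness on $\RR$ applied to the difference of two solutions.

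The main obstacle is a bookkeeping one rather than a conceptual one: the references cited typically formulate Schauder theory either on all of $\RR^n$ or on bounded domains with prescribed boundary/initial conditions, so one must check that the hypotheses as stated there translate to the scalar, one-dimensional, coefficient-variable setting used here for order six. Since \cite{EZ1998} covers general scalar $2b$-parabolic equations with Hölder coefficients and \cite{LSU1988} treats analogous estimates, no new analytic input is required beyond carefully matching notation and verifying parabolicity. Alternatively, one could avoid the reduction to $\RR$ altogether by a partition of unity argument on $\sS$ that localizes \eqref{eq:linprob} to problems on small arcs, where the standard Schauder theory applies directly; the localization generates only lower-order error terms, which are absorbed via a standard iteration in the Hölder norm.
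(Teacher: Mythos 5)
Your proposal is essentially correct, but it follows a different route than the paper. The paper first treats only the constant-coefficient model problem $\partial_t\varphi-\partial_x^6\varphi=f$ (existence of a weak solution by an energy/Galerkin-type argument as in the cited reference, then localization and the local Schauder estimate \cite[Theorem VI.21]{EZ1998} to upgrade to $H^{\frac{6+l}{6},6+l}$ regularity together with \eqref{eq:estlinprob}), and then passes to the general variable-coefficient operator by the method of continuity; the a priori estimate for the family of operators interpolating between $\partial_t-\partial_x^6$ and $L$ is what makes that continuity argument run. You instead periodize to a Cauchy problem on $[0,T]\times\RR$ and invoke the full classical Schauder solvability theory for variable-coefficient $2b$-parabolic Cauchy problems, recovering periodicity of the solution via translation invariance and uniqueness — a clean and legitimate shortcut, and your remark that no compatibility conditions are needed for the pure Cauchy problem is correct. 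Two caveats: the theorem you want is not in \cite{LSU1988}, which treats second-order equations; for sixth order with merely H\"older coefficients you need the higher-order theory of Eidelman or Solonnikov (and \cite{EZ1998} is largely oriented toward model and boundary value problems), so the ``bookkeeping'' you mention is really the verification that such a global variable-coefficient Cauchy theorem is available in the cited form — which is exactly what the paper's model-problem-plus-method-of-continuity argument avoids having to assume. Second, in your alternative localization sketch the errors produced by freezing the leading coefficient are of top order (controlled only by the small H\"older oscillation of $a_6$ on small parabolic cylinders, plus interpolation), not merely lower order, and localization by itself yields estimates rather than existence, so some version of the method of continuity or of your periodization argument is still needed to produce the solution.
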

	
	\begin{proof}
		First, assume that $a_6\equiv1$ and $a_q\equiv0$ for $q=0,1,\dots,5$. Like in \cite[Section 3]{G2019} one proves that in this case, a weak solution to \eqref{eq:linprob} exists. Localizing the problem and using \cite[Theorem VI.21]{EZ1998} one obtains Hölder regularity and \eqref{eq:estlinprob}. As in \cite[Section 3.1.4]{Ba2011}, the general linear problem \eqref{eq:linprob} is solved with the method of continuity.
	\end{proof}
	
	\subsection{The nonlinear problem}
	\label{sec:appnonlinprob}
	
	In this subsection, we proceed as in \cite[Section 2.5]{G06} and \cite[Section 3.3]{DS2017}. For a smooth immersed reference curve $\gamma_\ast$ with normal $\nu_\ast$, we define the open set
	\begin{align}
		W:=\big\lbrace  
		\varphi\in C^1([0,T]\times\sS;\RR):
		\vert\partial_x(\gamma_\ast+\varphi(t,\cdot)\nu_\ast)\vert\neq0 
		\big\rbrace
	\end{align}
	and
	$F\colon\sS\times\RR^7\times(0,\infty)\to\RR$ as
	\begin{align}
		&F\big(\cdot,\varphi,\partial_x\varphi,\dots,\partial_x^6\varphi,\vert\partial_x(\gamma_\ast+\varphi\nu_\ast)\vert^{-1}\big)
		:=
		\langle\nabla_s^2\nabla_{L^2}\sE_\lambda(\gamma_\ast+\varphi\nu_\ast),\nu_\ast^\perp\rangle\\
		&\qquad=\frac{1}{\vert\partial_x(\gamma_\ast+\varphi\,\nu_\ast)\vert^6}\,\partial_x^6\varphi+Q\big(\cdot,\varphi,\partial_x\varphi,\dots,\partial_x^5\varphi,\vert\partial_x(\gamma_\ast+\varphi\,\nu_\ast)\vert^{-1}\big),\label{eq:defP}
	\end{align}
	see \eqref{eq:defF}. Since $Q$ is a polynomial with smooth coefficients, the mapping
	\begin{align}
		\mathbf{F}\colon H^{\frac{6+l}{6},6+l}\left([0,T]\times\sS\right)\cap W&\to H^{\frac{l}{6},l}\left([0,T]\times\sS\right), \\
		\varphi&\mapsto F\big(\cdot,\varphi,\partial_x\varphi,\dots,\partial_x^6\varphi,\vert\partial_x(\gamma_\ast+\varphi\nu_\ast)\vert^{-1}\big),
	\end{align}
	$l\in\RR_+\!\setminus\ZZ$, is welldefined. With this, we write \eqref{eq:pdesystem2} as 
	\begin{align}
		\label{eq:sysF}
		\begin{cases}
			\dot\varphi=\mathbf{F}[\varphi]\quad&\text{ in } (0,T)\times\sS,\\
			\varphi(0,\cdot)=\varphi_0&\text{ on }\sS.
		\end{cases}
	\end{align}
	
	\begin{prop}\label{prop:nonlinprob}
		Take $\varphi_0\in C^{6+l}(\sS)$ such that $\gamma_\ast+\varphi_0\nu_\ast$ is an immersed curve. Then there exists $T>0$ and a unique solution $\varphi\in H^{\frac{6+l}{6},6+l}\left([0,T]\times\sS\right)$ to \eqref{eq:sysF}.
	\end{prop}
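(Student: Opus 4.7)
\textbf{Proof proposal for Proposition \ref{prop:nonlinprob}.}
The plan is to solve the quasilinear scalar problem \eqref{eq:sysF} by a Banach fixed point argument, using the linear theory provided by \Cref{prop:linprob}. Let me first freeze the coefficient of the top-order term at the initial datum. By hypothesis, $\gamma_\ast+\varphi_0\nu_\ast$ is immersed, so there is $m_0>0$ with $|\partial_x(\gamma_\ast+\varphi_0\nu_\ast)|\geq m_0$ on $\sS$. Define
\begin{align}
a_0(x):=\bigl|\partial_x(\gamma_\ast+\varphi_0(x)\nu_\ast(x))\bigr|^{-6}\in C^{l+\varepsilon}(\sS),
\end{align}
and consider the linear operator $L_0\psi:=\partial_t\psi-a_0(x)\partial_x^6\psi$, which is uniformly parabolic with coefficients in $H^{l/6,l}$. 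Split the nonlinearity as
\begin{align}
\mathbf{F}[\varphi]=a_0(x)\partial_x^6\varphi+R[\varphi],\qquad R[\varphi]:=\bigl(a_6(\varphi)-a_0\bigr)\partial_x^6\varphi+Q(\cdot,\varphi,\dots,\partial_x^5\varphi,|\partial_x(\gamma_\ast+\varphi\nu_\ast)|^{-1}),
\end{align}
where $a_6(\varphi):=|\partial_x(\gamma_\ast+\varphi\nu_\ast)|^{-6}$. Equation \eqref{eq:sysF} then reads $L_0\varphi=R[\varphi]$, $\varphi(0,\cdot)=\varphi_0$.

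Next I would set up the fixed point map. Fix $T>0$ (to be chosen small) and let
\begin{align}
X_T:=H^{\frac{6+l}{6},6+l}\bigl([0,T]\times\sS\bigr),\qquad X_T^0:=H^{\frac{l}{6},l}\bigl([0,T]\times\sS\bigr).
\end{align}
Using \Cref{prop:linprob}, let $\bar\varphi\in X_T$ be the solution of $L_0\bar\varphi=0$, $\bar\varphi(0,\cdot)=\varphi_0$. For $\rho>0$ define the closed ball
\begin{align}
B_{T,\rho}:=\bigl\{\varphi\in X_T:\varphi(0,\cdot)=\varphi_0,\ \|\varphi-\bar\varphi\|_{X_T}\leq\rho\bigr\}.
\end{align}
For $\rho$ small and $T\leq T_0(\rho)$, every $\varphi\in B_{T,\rho}$ satisfies $|\partial_x(\gamma_\ast+\varphi\nu_\ast)|\geq m_0/2$ on $[0,T]\times\sS$, so $R[\varphi]\in X_T^0$ is well-defined. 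I then define $\Psi:B_{T,\rho}\to X_T$ by letting $\Psi(\varphi)$ be the unique solution, given by \Cref{prop:linprob}, of
\begin{align}
L_0(\Psi(\varphi))=R[\varphi]\quad\text{in }(0,T)\times\sS,\qquad \Psi(\varphi)(0,\cdot)=\varphi_0.
\end{align}
Fixed points of $\Psi$ are exactly the solutions of \eqref{eq:sysF}.

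To run the contraction argument, I would establish two estimates, both with constants controlled by $\rho$, $\|\varphi_0\|_{C^{6+l}}$ and $\gamma_\ast$:
\begin{align}
\|R[\varphi]\|_{X_T^0}&\leq C_1,\label{eq:selfbd}\\
\|R[\varphi_1]-R[\varphi_2]\|_{X_T^0}&\leq \omega(T)\,\|\varphi_1-\varphi_2\|_{X_T},\qquad\omega(T)\xrightarrow[T\to 0]{}0,\label{eq:contr}
\end{align}
for $\varphi,\varphi_1,\varphi_2\in B_{T,\rho}$. Together with the linear estimate \eqref{eq:estlinprob} these give $\|\Psi(\varphi)-\bar\varphi\|_{X_T}\leq C\|R[\varphi]\|_{X_T^0}$ and
\begin{align}
\|\Psi(\varphi_1)-\Psi(\varphi_2)\|_{X_T}\leq C\,\omega(T)\,\|\varphi_1-\varphi_2\|_{X_T}.
\end{align}
Choosing first $\rho$ small enough and then $T$ small enough, $\Psi$ becomes a strict contraction on $B_{T,\rho}$, and the Banach fixed point theorem yields the unique solution $\varphi\in X_T$. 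Uniqueness in the larger space $X_T$ (without the ball restriction) follows by a standard argument: any second solution $\tilde\varphi\in X_T$ must, by continuity in $t$, eventually satisfy $\tilde\varphi\in B_{T',\rho}$ on a small interval $[0,T']$ with $\tilde\varphi(0,\cdot)=\varphi_0$, so it must coincide with $\varphi$ on $[0,T']$, and a continuation argument extends the equality.

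The main obstacle is \eqref{eq:contr}. The principal difficulty is the term $(a_6(\varphi_1)-a_6(\varphi_2))\partial_x^6\varphi_1$ in $R[\varphi_1]-R[\varphi_2]$, where $\partial_x^6\varphi_1$ only sits in $H^{l/6,l}$, so the prefactor must provide the small factor $\omega(T)$. The key observation is that for $\varphi\in B_{T,\rho}$ with $\varphi(0,\cdot)=\varphi_0$, the difference $\varphi_1-\varphi_2$ vanishes at $t=0$; by the parabolic Hölder structure and the embedding $H^{(6+l)/6,6+l}\hookrightarrow C^{\frac{6+l}{6}}([0,T];C^0(\sS))\cap C^0([0,T];C^{6+l}(\sS))$, all derivatives up to order $5$ of $\varphi_1-\varphi_2$ are estimated in $C^0([0,T]\times\sS)$ by $T^{\sigma}\|\varphi_1-\varphi_2\|_{X_T}$ for some $\sigma>0$, and a similar interpolation gives the corresponding Hölder seminorms. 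Since $Q$ is a smooth function of its arguments and $a_6$ depends smoothly on derivatives up to order $1$ of $\varphi$, this yields \eqref{eq:contr} by the standard Hölder calculus (as in \cite[Section 3.3]{DS2017}). Estimate \eqref{eq:selfbd} is then immediate from boundedness of $\varphi$ and its derivatives up to order $6$, together with the immersion bound on $|\partial_x(\gamma_\ast+\varphi\nu_\ast)|$.
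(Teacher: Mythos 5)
Your overall strategy (freeze the top-order coefficient at $\varphi_0$, solve the frozen linear problem with \Cref{prop:linprob}, and run a contraction argument) is a legitimate alternative to the paper's route, which instead applies the Local Inverse Function Theorem around a specially constructed approximate solution $\sigma$ and a cut-off-in-time approximation of the defect $\tilde f=\partial_t\sigma-\mathbf F[\sigma]$, crucially using that this defect vanishes at $t=0$. However, as written your argument has a genuine gap: you never verify that $\Psi$ maps $B_{T,\rho}$ into itself, and with the order of choices you prescribe (``first $\rho$ small enough, then $T$ small enough'') the self-map property is in fact \emph{false}. Indeed, take $\varphi=\bar\varphi$: the function $u:=\Psi(\bar\varphi)-\bar\varphi$ solves $L_0u=R[\bar\varphi]$ with $u(0,\cdot)=0$, and since $a_6(\varphi_0)=a_0$ one has $\partial_t u(0,\cdot)=R[\bar\varphi](0,\cdot)=Q(\cdot,\varphi_0,\dots,\partial_x^5\varphi_0,|\partial_x(\gamma_\ast+\varphi_0\nu_\ast)|^{-1})$, which is in general a fixed nonzero function. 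Hence $\Vert\Psi(\bar\varphi)-\bar\varphi\Vert_{H^{\frac{6+l}{6},6+l}}\geq\sup|\partial_t u|$ is bounded below \emph{uniformly as $T\to0$}, so for $\rho$ smaller than this quantity the ball is not invariant, and the Banach fixed point theorem cannot be invoked. This is precisely the well-known obstruction for naive contraction arguments in parabolic H\"older spaces: the Schauder constant in \eqref{eq:estlinprob} does not produce a factor $T^\theta$, so smallness must come from compatibility at $t=0$, not from the linear estimate.

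The gap is repairable, but it requires exactly the ingredient your write-up glosses over. Either recenter the ball at an approximate solution whose defect vanishes at $t=0$ (e.g.\ the solution of $L_0\sigma=\mathbf F[\varphi_0]-a_0\partial_x^6\varphi_0$, $\sigma(0,\cdot)=\varphi_0$ — this is the analogue of the paper's Step 2, where $\tilde f(0,\cdot)=0$ is the key identity), or keep the center $\bar\varphi$ but choose $\rho$ \emph{large} (of the order of $C\Vert Q[\varphi_0]\Vert_{C^l}$) and then $T$ small: for $\varphi\in B_{T,\rho}$ one splits $R[\varphi]=Q[\varphi_0]+\big(R[\varphi]-Q[\varphi_0]\big)$, where the second summand vanishes at $t=0$ and, by the higher time regularity of $\varphi,\dots,\partial_x^5\varphi$ and interpolation of the spatial seminorm against the sup norm, has $H^{\frac l6,l}$-norm of size $T^\theta\,\mathrm{poly}(\rho)$; this yields the invariance of the ball for small $T$, while your contraction estimate (which correctly exploits that $\varphi_1-\varphi_2$ vanishes at $t=0$) closes the argument. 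Note also that this bookkeeping, like the paper's cut-off estimates, is cleanest for $0<l<1$, with the general case obtained afterwards by bootstrapping as in the paper's Step 6, and that one should record that the constant in \eqref{eq:estlinprob} can be taken uniform for $T$ in a bounded range. The uniqueness-by-continuation step you sketch is fine and parallels the paper's Step 7.
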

	
	\begin{proof}
		First, we assume that $0<l<1$.\smallskip
		
		\textit{Step 1: Linearized problem.} 
		Let us make the following observation. The Fréchet derivative of $\mathbf{F}$ at $\xi\in H^{\frac{6+l}{6},6+l}\left([0,T]\times\sS\right)\cap W$ can be written as
		\begin{align}
			\mathrm{D}\mathbf{F}[\xi] =\sum_{q=0}^6a_q\partial_x^q
		\end{align}
		with $a_6=\vert\partial_x(\gamma_\ast+\xi\nu_\ast)\vert^{-6}$ and $a_q(t,x)=\tilde a_q(t,x,\xi,\partial_x\xi,\dots,\partial_x^5\xi,\vert\partial_x(\gamma_\ast+\xi\nu_\ast)\vert^{-1})$, $q=0,\dots,5$, for smooth functions $\tilde a_q\colon[0,T]\times\sS\times\RR^6\times(0,\infty)\to\RR$. 
		In particular, $a_q\in H^{\frac{l}{6},l}\left([0,T]\times\sS\right)$ for $q=0,\dots,6$.
		Since $\xi\in W$ and by compactness, $a_6$ is uniformly bounded away from zero on $[0,T]\times\sS$. Hence, $\partial_t-\mathrm{D}\mathbf{F}[\xi]$ is a parabolic operator and \Cref{prop:linprob} yields a unique solution $\sigma\in H^{\frac{6+l}{6},6+l}\left([0,T]\times\sS\right)$ to 
		\begin{align}
			\begin{cases}\label{eq:sysstern}
				\dot\sigma-\mathrm{D}\mathbf{F}[\xi]\sigma=f \quad & \text{ in }[0,T]\times\sS,\\
				\sigma(0,\cdot)=\varphi_0 & \text{ on }\sS
			\end{cases}
		\end{align}
		for $f\in H^{\frac{l}{6},l}\left([0,T]\times\sS\right)$.
		\smallskip
		
		\textit{Step 2: Special solution.} Now, we consider \eqref{eq:sysstern} with $\xi\equiv0$ and $f=\mathbf{F}[\varphi_0]-\mathrm{D}\mathbf{F}[0]\varphi_0$. Here, $\varphi_0\in C^{6+l}(\sS)$ is interpreted as function $\varphi_0\in H^{\frac{6+l}{6},6+l}\left([0,T]\times\sS\right)\cap W$ constant in time. Since $f\in H^{\frac{l}{6},l}\left([0,T]\times\sS\right)$, there exists a unique solution in $ H^{\frac{6+l}{6},6+l}\left([0,T]\times\sS\right)$ to \eqref{eq:sysstern}. In the following, we will denote this solution by $\sigma$. Since $\varphi_0\in W$, we can make $T>0$ small enough such that $\sigma\in W$. Now, we define
		\begin{align}\label{eq:H}
			\tilde f:=\partial_t\sigma-\mathbf{F}[\sigma]\in H^{\frac{l}{6},l}\left([0,T]\times\sS\right).
		\end{align}
		Evaluated at $t=0$, we have
		\begin{align}
			\tilde f(0,\cdot)=\mathrm{D}\mathbf{F}[0]\sigma\vert_{t=0}+\mathbf{F}[\varphi_0]-\mathrm{D}\mathbf{F}[0]\varphi_0-\mathbf{F}[\sigma]\vert_{t=0}=0.\label{eq:ftilde(0)}
		\end{align}
		Here, we used \eqref{eq:sysstern} and the smoothness of $\mathbf{F}$.
		\smallskip
		
		\textit{Step 3: Application of the Inverse Function Theorem.} 
		Let $0<k<l$ such that $k\not\in\ZZ$ and consider the nonlinear operator 
		\begin{align}
			\Phi\colon H^{\frac{6+k}{6},6+k}\left([0,T]\times\sS\right)\cap W&\to H^{\frac{k}{6},k}\left([0,T]\times\sS\right)\times C^{6+k}(\sS),\\
			\xi&\mapsto\big(\partial_t\xi-\mathbf{F}[\xi],\,\xi(0,\cdot)\big).
		\end{align}
		By Step 1, the Fréchet derivative
		\begin{align}
			\mathrm{D}\Phi[\sigma]\colon H^{\frac{6+k}{6},6+k}\left([0,T]\times\sS\right)&\to H^{\frac{k}{6},k}\left([0,T]\times\sS\right)\times C^{6+k}(\sS),\\
			\zeta&\mapsto\big(\partial_t\zeta-\mathrm{D}\mathbf{F}[\sigma]\zeta,\,\zeta(0,\cdot)\big)
		\end{align}
		is a linear isomorphism. It follows by the Local Inverse Function Theorem that $\Phi$ is a local diffeomorphism at $\sigma$. So there is a neighborhood $U\subset H^{\frac{6+k}{6},6+k}\left([0,T]\times\sS\right)\cap W$ of $\sigma$ and a neighborhood $V\subset H^{\frac{k}{6},k}\left([0,T]\times\sS\right)\times C^{6+k}(\sS)$ of $\Phi(\sigma)=(\tilde f,\varphi_0)$ such that $\Phi\colon U\to V$ is a diffeomorphism.
		\smallskip
		
		\textit{Step 4: Cut-off in time.} Choose for $0<\varepsilon<\min\lbrace1,\frac T2\rbrace$ cut-off functions $\eta_\varepsilon\in C^\infty([0,T])$ such that $0\leq\eta_\varepsilon\leq1$, $0\leq\dot\eta_\varepsilon\leq2\varepsilon^{-1}$ and $\eta_\varepsilon\equiv0$ on $[0,\varepsilon]$ as well as $\eta_\varepsilon\equiv1$ on $[2\varepsilon,T]$. Define 
		\begin{align}
			f_\varepsilon:=\eta_\varepsilon\tilde f.\label{eq:deffvarepsilon}
		\end{align}
		Analogously as in \cite[Lemma 2.5.8]{G06}, one shows that $f_\varepsilon\in H^{\frac{l}{6},l}\left([0,T]\times\sS\right)$
		with norm bounded independently of $\varepsilon$.
		Here, we need the assumption $l<1$. The crucial point is estimating the Hölder seminorm in time. We have
		\begin{align}
			\vert f_\varepsilon(t_1)-f_\varepsilon(t_2)\vert
			\leq
			\vert\tilde f(t_1)-\tilde f(t_2)\vert+
			\vert\tilde f(t_1)\vert\vert\eta_\varepsilon(t_1)-\eta_\varepsilon(t_2)\vert
		\end{align}
		for $t_1< t_2\in[0,T]$. Assume $t_1<2\varepsilon$, otherwise $\vert\eta_\varepsilon(t_1)-\eta_\varepsilon(t_2)\vert=0$. For $t_2\leq 3\varepsilon$, we obtain with \eqref{eq:ftilde(0)} that
		\begin{align}
			\vert\tilde f(t_1)\vert\vert\eta_\varepsilon(t_1)-\eta_\varepsilon(t_2)\vert
			&=\vert\tilde f(t_1)-\tilde f(0)\vert\vert\eta_\varepsilon(t_1)-\eta_\varepsilon(t_2)\vert
			\leq \frac{C}{\varepsilon} \,t_1^\frac l6\,\vert t_1-t_2\vert\\
			&\leq \frac{C}{\varepsilon} \,t_1^\frac l6\,\vert t_1-t_2\vert^\frac l6\, t_2^{1-\frac{l}{6}}
			\leq \frac{C}{\varepsilon} \,t_2\,\vert t_1-t_2\vert^\frac l6
			\leq C \vert t_1-t_2\vert^\frac l6
		\end{align}
		for a constant $C$ not depending on $\varepsilon$.
		For $t_2>3\varepsilon$, one has
		\begin{align}
			\vert\tilde f(t_1)\vert\vert\eta_\varepsilon(t_1)-\eta_\varepsilon(t_2)\vert
			&\leq C t_1^\frac l6
			\leq C\varepsilon^\frac l6
			\leq C\vert t_1-t_2\vert^\frac l6.
		\end{align}
		Again, the constant $C$ does not depend on $\varepsilon$, which allows us to show the claim.
		Moreover, one proves with standard arguments (see for example \cite[Section 6.8]{GT2001}) and the Arzelà--Ascoli theorem, that for $0<k<l<1$, $H^{\frac{l}{6},l}\left([0,T]\times\sS\right)$ is compactly embedded in $H^{\frac{k}{6},k}\left([0,T]\times\sS\right)$.
		It follows that there exists a sequence $\varepsilon_n\to0$, $n\to\infty$, and $\hat f\in H^{\frac{l}{6},l}\left([0,T]\times\sS\right)$ such that 
		\begin{align}\label{eq:limfepsilonn}
			f_{\varepsilon_n}\to\hat f \;\text{ in }H^{\frac{k}{6},k}\left([0,T]\times\sS\right)\;\text{ for } n\to\infty.
		\end{align}
		Due to \eqref{eq:ftilde(0)} and the continuity of $\tilde f$, we find that $f_\varepsilon\to\tilde f$, $\varepsilon\to0$ uniformly in the sup norm. So we conclude that $\hat f=\tilde f$. 
		\smallskip
		
		\textit{Step 5: Existence of solution. }
		By \eqref{eq:limfepsilonn}, there exists some $\varepsilon_\ast>0$ small enough such that $(f_{\varepsilon_\ast},\varphi_0)\in V$ with $V$ given as in Step 3. This implies the existence of a unique $\varphi\in U$ such that 
		\begin{align}
			\partial_t\varphi=\mathbf{F}[\varphi]+f_{\varepsilon_\ast}\quad\text{and}\quad\varphi(0,\cdot)=\varphi_0.
		\end{align}
		With \eqref{eq:deffvarepsilon} and the definition of $\eta_\varepsilon$, we conclude that $\varphi$ solves \eqref{eq:sysF} for $t\in[0,\varepsilon_\ast]$.
		\smallskip
		
		\textit{Step 6: Regularity. } So far, we have $\varphi\in H^{\frac{6+k}{6},6+k}\left([0,\varepsilon_\ast]\times\sS\right)\cap W$ for $0<k<l<1$. Defining 
		\begin{align}
			a_6(t,x):=\vert\partial_x(\gamma_\ast+\varphi\nu_\ast)\vert^{-6}\in H^{\frac{5+k}{6},5+k}\left([0,\varepsilon_\ast]\times\sS\right)\subset H^{\frac{l}{6},l}\left([0,\varepsilon_\ast]\times\sS\right)
		\end{align}
		and
		\begin{align}
			p(t,x):=P(\cdot,\varphi,\dots,\partial_x^5\varphi,\vert\partial_x(\gamma_\ast+\varphi\nu_\ast)\vert^{-1})\in H^{\frac{1+k}{6},1+k}\left([0,\varepsilon_\ast]\times\sS\right)\subset H^{\frac{l}{6},l}\left([0,\varepsilon_\ast]\times\sS\right)
		\end{align}
		with $P$ as in \eqref{eq:defP}, \Cref{prop:linprob} yields a unique solution $\xi\in H^{\frac{6+l}{6},6+l}\left([0,\varepsilon_\ast]\times\sS\right)$ to 
		\begin{align}\label{eq:probregularity}
			\begin{cases}
				\partial_t\xi-a_6\partial_x^6\xi=p \quad&\text{ in }[0,\varepsilon_\ast]\times\sS,\\
				\xi(0,\cdot)=\varphi_0 &\text{ on }\sS.
			\end{cases}
		\end{align}
		Since $\varphi$ solves \eqref{eq:probregularity} on $[0,\varepsilon_\ast]\times\sS$, we conclude that $\xi=\varphi$ for $t\leq\varepsilon_\ast$. Hence, we obtain $\varphi\in H^{\frac{6+l}{6},6+l}\left([0,\varepsilon_\ast]\times\sS\right)$.
		\smallskip
		
		\textit{Step 7: Uniqueness.}
		Assume we have two solutions $\varphi_1,\varphi_2\in H^{\frac{6+l}{6},6+l}\left([0,\varepsilon_i]\times\sS\right)$, $i=1,2$ and $\varepsilon_1\leq\varepsilon_2\leq\varepsilon_\ast$. Define
		\begin{align}
			T^\ast=\sup\big\lbrace t\in[0,\varepsilon_1)\colon\varphi_1=\varphi_2\text{ in }C^{6+l}(\sS)\text{ on }[0,t]\big\rbrace.
		\end{align}
		We consider $\Phi$ of Step 3 for variable $T$ and show that for $T>0$ small enough, 
		\begin{align}
			\label{eq:inUinV}
			\varphi_1\vert_{[0,T]},\varphi_2\vert_{[0,T]}\in U \quad\text{ as well as }\quad (0,\varphi_0)\in V.
		\end{align}
		From the proof of the Inverse Function Theorem, 
		it follows that there exists $r>0$ such that 
		\begin{align}\label{eq:uniformnghb}
			B_r(\sigma)\subset U(T)\quad\text{ and }\quad B_r(\tilde f,\varphi_0)\subset V(T)\quad\text{ for all }0<T\leq\varepsilon_1.
		\end{align}
		Here, $\sigma$ and $\tilde f$ are given as in Step 2. Now, we first note that with \eqref{eq:H} and \eqref{eq:ftilde(0)}, for $q=0,\dots,6$,
		\begin{align}
			\partial_x^q(\varphi_i-\sigma)\vert_{t=0}=0\quad\text{ and }\quad\partial_t(\varphi_i-\sigma)\vert_{t=0}=\big(\mathbf{F}[\varphi_i]-\mathbf{F}[\sigma]-\tilde f\big)\vert_{t=0}=0.
		\end{align}
		With \Cref{lem:normto0} it follows that 
		\begin{align}
			\Vert\varphi_i-\sigma\Vert_{H^{\frac{6+k}{6},6+k}\left([0,T]\times\sS\right)}\to0\quad \text{ as } T\to0.
		\end{align}
		Similarly, \eqref{eq:ftilde(0)} implies
		\begin{align}
			\Vert\tilde f\Vert_{H^{\frac{k}{6},k}\left([0,T]\times\sS\right)}\to0\quad \text{ as } T\to0.
		\end{align}
		Due to \eqref{eq:uniformnghb} we conclude that for $T>0$ small enough, \eqref{eq:inUinV} holds and thus
		\begin{align}
			\varphi_1=\varphi_2 \text{ in }{H^{\frac{6+k}{6},6+k}\left([0,T]\times\sS\right)}.
		\end{align}
		The regularity of $\varphi_1$ and $\varphi_2$ even gives coincidence in ${H^{\frac{6+l}{6},6+l}\left([0,T]\times\sS\right)}$. It follows that $T^\ast>0$. 
		Now, assume $T^\ast<\varepsilon_1$. Then $\varphi_1(T^\ast)=\varphi_2(T^\ast)=:\tilde\varphi_0\in C^{6+l}(\sS)$. Taking $\tilde\varphi_0$ as new initial datum and repeating the arguments yields a contradiction to the definition of $T^\ast$. Hence, we get $T^\ast=\varepsilon_1$ and uniqueness is proven.
		\smallskip
		
		Bootstrapping and using the same arguments as in Step 6 shows the claim for $l>1$.
	\end{proof}
	
	In the last step of the proof of \Cref{prop:nonlinprob}, we use the following observation. 
	
	\begin{lem}\label{lem:normto0}
		Let $0<k<l<1$ and $f\in H^{\frac{6+l}{6},6+l}\left([0,T]\times\sS\right)$ such that 
		\begin{align}
			f(0,\cdot)=\partial_tf(0,\cdot)=\partial_xf(0,\cdot)=\dots=\partial_x^6f(0,\cdot)=0.
		\end{align}
		Then 
		$
		\Vert f\Vert_{H^{\frac{6+k}{6},6+k}\left([0,T]\times\sS\right)}\to0$ as $T\to0.
		$
	\end{lem}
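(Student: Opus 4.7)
The plan is to expand the target norm $\|f\|_{H^{(6+k)/6,6+k}([0,T]\times\sS)}$ into its finitely many contributions (sup norms, spatial Hölder seminorms, temporal Hölder seminorms), and bound each piece by a quantity that goes to $0$ as $T\to 0$. The two ingredients are the higher regularity $f\in H^{(6+l)/6,6+l}$, which gives extra room between the exponents $l$ and $k$, and the prescribed vanishing of the low-order derivatives at $t=0$, which allows sup norms to be estimated by a small power of $T$.

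I would start with the preliminary observation that differentiating $\partial_t f(0,\cdot)=0$ in $x$ yields $\partial_t\partial_x^n f(0,\cdot)=0$ for every admissible $n$. Combined with $\partial_x^n f(0,\cdot)=0$ for $n=0,\dots,6$, this gives
\begin{equation*}
\partial_t^m\partial_x^n f(0,\cdot)=0\qquad\text{for all }(m,n)\text{ with }6m+n\le 6,
\end{equation*}
since the constraint $6m+n\le 6$ forces $m\le 1$. This extends the hypothesis exactly to the set of mixed derivatives appearing in the sup-norm part of the target norm.

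Next I would handle the three types of terms separately. For the \emph{sup norms}, note that for $6m+n\le 6$ the derivative $\partial_t^m\partial_x^n f$ is Hölder continuous in $t$ with exponent $(6+l-6m-n)/6\ge l/6>0$ (controlled by $\|f\|_{H^{(6+l)/6,6+l}}$), hence
\begin{equation*}
|\partial_t^m\partial_x^n f(t,x)|=|\partial_t^m\partial_x^n f(t,x)-\partial_t^m\partial_x^n f(0,x)|\le C\,T^{l/6}\longrightarrow 0.
\end{equation*}
For the \emph{time-Hölder seminorms} with $6m+n\in\{1,\ldots,6\}$, no vanishing is needed: just factor
\begin{equation*}
\frac{|\partial_t^m\partial_x^n f(t_1,x)-\partial_t^m\partial_x^n f(t_2,x)|}{|t_1-t_2|^{(6+k-6m-n)/6}}=\frac{|\partial_t^m\partial_x^n f(t_1,x)-\partial_t^m\partial_x^n f(t_2,x)|}{|t_1-t_2|^{(6+l-6m-n)/6}}\cdot|t_1-t_2|^{(l-k)/6}\le C\,T^{(l-k)/6}.
\end{equation*}
For the \emph{spatial Hölder seminorms}, which appear only for $6m+n=\lfloor 6+k\rfloor=6$ (i.e.\ $(m,n)\in\{(0,6),(1,0)\}$), apply the standard interpolation
\begin{equation*}
[D]_{C^{0,k}(\sS)}\le C_{k,l}\,[D]_{C^{0,l}(\sS)}^{k/l}\,\|D\|_{C^0(\sS)}^{1-k/l}
\end{equation*}
with $D=\partial_t^m\partial_x^n f(t,\cdot)$. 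The first factor is bounded by $\|f\|_{H^{(6+l)/6,6+l}}$ uniformly in $t$, and by the sup-norm step the second factor satisfies $\|D\|_{C^0(\sS)}\le CT^{l/6}\to 0$ uniformly in $t\in[0,T]$.

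Summing the finitely many contributions gives $\|f\|_{H^{(6+k)/6,6+k}([0,T]\times\sS)}\to 0$. There is no real obstacle here: this is a standard interpolation-plus-vanishing argument. The only mildly non-obvious point is the preliminary bookkeeping that the hypothesis $\partial_t f(0,\cdot)=0$, together with the spatial vanishings, covers exactly the mixed derivatives $\partial_t^m\partial_x^n f$ with $6m+n\le 6$, which is precisely what is needed to make the sup-norm part tend to zero.
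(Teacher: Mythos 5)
Your proposal is correct and takes essentially the same route as the paper: split the parabolic H\"older norm into sup, spatial-H\"older and temporal-H\"older contributions, gain a factor $T^{(l-k)/6}$ in the time seminorms by lowering the exponent from $(6+l-6m-n)/6$ to $(6+k-6m-n)/6$, and control the spatial seminorms (only $6m+n=6$) by interpolating $C^{0,k}$ between $C^{0,l}$ and $C^{0}$, the $C^{0}$ factor being small because of the prescribed vanishing at $t=0$. The only cosmetic point is the sup norm of $f$ itself, where your claimed temporal exponent $(6+l)/6>1$ is not part of the norm; but boundedness of $\partial_t f$ gives $\vert f(t,x)\vert=\vert f(t,x)-f(0,x)\vert\leq CT$, so the argument goes through unchanged.
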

	\begin{proof}
		Consider the periodic counterpart $f\colon[0,T]\times\RR\to\RR$ and let the interval $I\subset\RR$ contain at least two periods of $f$. Then 
		\begin{align}
			&\Vert f\Vert_{H^{\frac{6+k}{6},6+k}\left([0,T]\times\sS\right)}
			=\sup_{[0,T]\times I}\vert f\vert+\sup_{[0,T]\times I}\vert \partial_tf\vert+\sup_{[0,T]\times I}\vert \partial_xf\vert+\dots+\sup_{[0,T]\times I}\vert \partial_x^6f\vert\\
			&\qquad +\sup_{\substack{t\in[0,T],x_1,x_2\in I\\x_1\neq x_2}}\frac{\vert\partial_tf(t,x_1)-\partial_tf(t,x_2)\vert}{\vert x_1-x_2\vert^k}
			+\sup_{\substack{t\in[0,T],x_1,x_2\in I\\x_1\neq x_2}}\frac{\vert\partial_x^6f(t,x_1)-\partial_x^6f(t,x_2)\vert}{\vert x_1-x_2\vert^k}\\
			&\qquad +\sup_{\substack{t_1,t_2\in[0,T],x\in I\\t_1\neq t_2}}\frac{\vert\partial_tf(t_1,x)-\partial_tf(t_2,x)\vert}{\vert t_1-t_2\vert^\frac k6}
			+\sup_{\substack{t_1,t_2\in[0,T],x\in I\\t_1\neq t_2}}\frac{\vert\partial_xf(t_1,x)-\partial_xf(t_2,x)\vert}{\vert t_1-t_2\vert^\frac{5+k}{6}}\\
			&\qquad+\dots+\sup_{\substack{t_1,t_2\in[0,T],x\in I\\t_1\neq t_2}}\frac{\vert\partial_x^6f(t_1,x)-\partial_x^6f(t_2,x)\vert}{\vert t_1-t_2\vert^\frac k6}.
		\end{align}
		For the terms coming from the Hölder seminorm in space, interpolation as in \cite[Prop. 1.1.3]{L1995} yields
		\begin{align}
			\sup_{\substack{t\in[0,T],x_1,x_2\in I\\x_1\neq x_2}}\frac{\vert\partial_tf(t,x_1)-\partial_tf(t,x_2)\vert}{\vert x_1-x_2\vert^k}
			&\leq \sup_{t\in[0,T]}\Vert\partial_tf\Vert_{C^k(\sS)}\\
			&\leq \sup_{t\in[0,T]}\Vert\partial_tf\Vert^{1-\frac kl}_{C^l(\sS)}\Vert\partial_tf\Vert^\frac kl_{C(\sS)}\to 0\quad\text{ as }T\to0.
		\end{align}
		For the terms coming from the Hölder seminorm in time, we have
		\begin{align}
			\sup_{\substack{t_1,t_2\in[0,T],x\in I\\t_1\neq t_2}}\frac{\vert\partial_tf(t,x_1)-\partial_tf(t,x_2)\vert}{\vert t_1-t_2\vert^\frac k6}
			&=\sup_{\substack{t_1,t_2\in[0,T],x\in I\\t_1\neq t_2}}\frac{\vert\partial_tf(t,x_1)-\partial_tf(t,x_2)\vert}{\vert t_1-t_2\vert^\frac l6}\vert t_1-t_2\vert^\frac{l-k}{6}\\
			&\leq C\sup_{\substack{t_1,t_2\in[0,T]\\t_1\neq t_2}}\vert t_1-t_2\vert^\frac{l-k}{6}\to 0\quad\text{ as }T\to0.
		\end{align}
		Treating the other terms with the same arguments yields the claim.
	\end{proof}

	
	\section*{Acknowledgements}
	The author would like to thank Anna Dall'Acqua for constructive discussions and valuable comments, as well as Fabian Rupp for his helpful feedback.
	
	\bibliographystyle{abbrv}
	\bibliography{biblio}
	
\end{document}